\numberwithin{equation}{section}
\newtheorem{theorem}{Theorem}[section]
\newtheorem{lemma}[theorem]{Lemma}
\newtheorem{proposition}[theorem]{Proposition}
\newtheorem{corollary}[theorem]{Corollary}
\theoremstyle{definition}
\newtheorem{definition}[theorem]{Definition}
\newtheorem{remark}[theorem]{Remark}
\newtheorem*{standingassumption*}{Standing assumptions}
\def\E{{\mathbb E}}
\def\R{{\mathbb R}}
\def\N{{\mathbb N}}
\def\NN{{\mathcal N}}
\def\PP{{\mathbb P}}
\def\P{{\mathcal P}}
\def\M{{\mathcal M}}
\def\X{{\mathcal X}}
\def\Y{{\mathcal Y}}
\def\D{{\mathcal D}}
\def\W{{\mathcal W}}
\def\A{{\mathcal A}}
\def\F{{\mathcal F}}
\def\C{{\mathcal C}}
\def\S{{\mathcal S}}
\def\Y{{\mathcal Y}}
\def\Z{{\mathcal Z}}
\def\CY{{\mathfrak C}(\Y)}
\def\DG{{\mathcal D}}
\def\const{R}
\title[Rare equilibria in large games]{Rare Nash Equilibria and the Price of Anarchy in Large Static Games}
\author{Daniel Lacker}
\thanks{The first author was supported by the National Science Foundation under Award No. DMS 1502980.}
\author{Kavita Ramanan}
\thanks{The second author was partially supported by the grant NSF CMMI 1538706.}
\begin{document}

\begin{abstract}
We study a static game played by a finite number of agents, in which agents are assigned independent and identically distributed random types and each agent minimizes its objective function by choosing from a set of admissible actions that depends on its type. The game is anonymous in the sense that the objective function of each agent depends on the actions of other agents only through the empirical distribution of their type-action pairs. We study the asymptotic behavior of Nash equilibria, as the number of agents tends to infinity, first by deriving laws of large numbers characterizes almost sure limit points of Nash equilibria in terms of so-called Cournot-Nash equilibria of an associated nonatomic game. Our main results are large deviation principles that characterize the probability of rare Nash equilibria and associated conditional limit theorems describing the behavior of equilibria conditioned on a rare event. The results cover situations when neither the finite-player game nor the associated nonatomic game has a unique equilibrium. In addition, we study the asymptotic behavior of the price of anarchy, complementing existing worst-case bounds with new probabilistic bounds in the context of congestion games, which are used to model traffic routing in networks.
\end{abstract}

\keywords{nonatomic games, mean field games, Nash equilibrium, Cournot-Nash equilibrium, large deviation principle, price of anarchy, congestion games, entry games, conditional limit theorems} 

\subjclass[2010]{Primary: 60F10, 91A10, 91A15 Secondary: 91A13, 91A43}

\maketitle

\section{Introduction}

\subsection{Model Description}
\label{subs-intro-model}

We consider a static game played by $n$ agents $i = 1, \ldots, n$, in
which  the $i^{\text{th}}$ agent is assigned a type $w_i$ from a 
type space $\W$, and is allowed to choose an action $x_i$ from a
subset ${\mathcal C} (w_i)$ of the action space $\X$, 
so as to minimize 
its objective function $J_i^n (w_1, \ldots, w_n, x_1, \ldots, x_n)$,
which can be viewed as a cost. 
Both $\W$ and $\X$ are assumed to be metric spaces, and 
for $w \in \W$, $\C(w)$ represents the set of admissible actions
allowed for an agent of type $w$. 
We restrict our attention to  anonymous games,   
in which  an individual agent's objective function is influenced by its own
type and action,  but depends on  the other agents' types and actions
only through the empirical distribution of type-action pairs (rather
than on the full configuration of types and actions of individual agents), 
and in addition, the form of this dependence is the  same for all agents. 
More precisely, if we let $\delta_{(w, x)}$ denote the Dirac delta
measure at the point $(w, x) \in \W \times \X$, 
 the objective function of the $i^{\text{th}}$ agent in the $n$-player game takes the form 
\[ J_i^n (w_1, \ldots, w_n, x_1, \ldots, x_n) = F \left(\frac{1}{n} \sum_{k=1}^n \delta_{(w_k, x_k)}, w_i, x_i \right), 
\]
for a suitable function $F: \P (\W \times \X) \times \W \times \X \mapsto \R$, where 
$\P(\W\times\X)$ is the set of Borel probability measures on
$\W\times\X$.  
We are interested in  properties of Nash equilibria when the number of players is large
and seek to understand  the behavior of agents in terms of their
\emph{types}, not their \emph{names}.    
Here,  a \emph{Nash equilibrium with type vector} $\vec{w} =
(w_1,\ldots,w_n) \in \W^n$ is any vector $(x_1,\ldots,x_n) \in \X^n$
such that for each $i$,  $x_i$ lies in the set  $\C(w_i)$  of
admissible actions and the objective function satisfies 
\[
J^n_i(w_1,\ldots,w_n,x_1,\ldots,x_n) = \inf_{y \in \C(w_i)}J^n_i(w_1,\ldots,w_n,x_1,\ldots,x_{i-1},y,x_{i+1},\ldots,x_n).
\]
Note that an agent's type $w_i$ influences not only the 
objective function but also the set $\C(w_i)$ of admissible actions. 
For simplicity, we work exclusively with pure strategies, although we
refer the interested reader to  Blanchet-Carlier \cite[Section
4]{blanchet2014nash} for extensions of a similar model setup to cover mixed strategies. 
  For pure strategies, even existence of a Nash
equilibrium in an $n$-player game is not 
always guaranteed, but we will  be concerned with the large
class of games for which Nash equilibria are known to exist (see
Section \ref{se:congestion}).  However, such games often admit multiple
equilibria, and so we will in general not assume  uniqueness of Nash
equilibria.

\subsection{Discussion of Results and Related Work.}

It is in general hard to explicitly identify the set of Nash
equilibria, especially in large games.  
Thus, we instead study 
the behavior of Nash equilibria in the limit as the number of
agents goes to infinity.
Specifically, under the assumption that the types of agents in the $n$-player game are sampled independently from a 
common type distribution $\lambda_0 \in {\mathcal P}(\W)$,  where
${\mathcal P}(\W)$ denotes the space of probability measures on $\W$,
the goal of this work is to study the asymptotic behavior of
corresponding Nash equilibria as the number of agents goes to
infinity.   
To emphasize that they are random, we will use capital letters
$\{W_i\}$ to denote the sequence of i.i.d.\ types and the
 array $\{X^n = (X_1^n,\ldots, X_n^n)\}$ of associated agent actions in the sequence of $n$-player games. 
 Under fairly general conditions (see the standing assumption in
 Section \ref{sec-mainres} below),  we first state  a strong law of
 large numbers (Theorem \ref{th:intro-limit})  that   shows that almost sure limit points of  sequences of (random) empirical
type-action distributions
$\frac{1}{n}\sum_{i=1}^n\delta_{(W_i,X_i^n)}$ associated with Nash
equilibria can be characterized as Cournot-Nash equilibria of a
certain  nonatomic game associated with the type distribution $\lambda_0$.     When there is a unique Cournot-Nash
equilibrium for the nonatomic game,   this implies almost-sure covergence, as $n \rightarrow
\infty$, of the empirical type-action distributions of Nash equilibria of $n$-player
games to  the corresponding Cournot-Nash equilibrium.

Our precise framework is  related
to several existing models in the literature. 
In particular,  the nonatomic game 
  is  similar to the model   considered by  Blanchet and Carlier
\cite{blanchet2014nash}, which is itself  a  reparametrization of the
seminal framework of Mas-Colell \cite{mas1984theorem}.  The particular
definition that we use  (see Section \ref{subs-nonatomic}) 
is a slight generalization that has two new features.   
First, it allows for the incorporation of a
constraint map $\C$ that specifies the admissible set of actions
associated with each  agent type.  This extension is necessary to cover
interesting examples such as the class of congestion games described
in Section \ref{subs-intro-mot}.  
Theorem \ref{th:intro-limit} is one of many related (and largely
equivalent) laws of large numbers in the literature on large games, 
notably
\cite{green1984continuum,housman1988infinite,carmona2004nash,kalai2004large,blanchet2014nash}.
Second, our
model involves \emph{unknown} or \emph{random} types, whereas all of
these papers work with \emph{known} or \emph{deterministic} sequences
of type vectors $(w^n_1,\ldots,w^n_n)$ satisfying
$\frac{1}{n}\sum_{i=1}^n\delta_{w^n_i} \rightarrow \lambda_0$.   Although
limited to complete information and homogeneous beliefs, our model
setup is nonetheless also reminiscent of Harsanyi's formalism of
\emph{Bayesian games} \cite{harsanyi1967games}.

 The primary focus of this work is the estimation of the probability 
that a Nash equilibrium of an $n$-player game  makes    a large
deviation from the law of large numbers limit when $n$  is large.   
Our first main set of results, 
 stated in Section
\ref{subs-ld1},   concern the large deviations
behavior of any sequence of
(random) empirical type-action distributions associated with Nash
equilibria, under the assumption that there is a  unique Cournot-Nash equilibrium for the corresponding
  nonatomic game. 
Specifically,   Theorem \ref{th:intro-LDP} establishes a 
large deviations principle  (LDP) 
 for such a sequence, which provides precise  asymptotic estimates of the
 exponential rate of decay of probabilities of the occurrence of rare
 Nash  equilibria (i.e., those  that are far from the Cournot-Nash
 equilibrium), and the exponential decay rate  is expressed in terms of
 quantities that are derived from the more tractable nonatomic game. 
Establishing an LDP (as opposed to just obtaining large deviation
bounds) sheds light on the behavior of Nash 
equilibria conditioned on a rare event, as exemplified by  the 
conditional limit result  in  Theorem
\ref{th:intro-conditional-limit}.

Uniqueness of the Cournot-Nash equilibrium holds for many 
 nonatomic games,  including the important class of potential games 
with strictly convex potential.
This covers many congestion games, discussed in more detail in Section \ref{se:congestion}. The foundational work on finite potential games is \cite{monderer-shapley}, but we refer to \cite{blanchet2015optimal} for interesting developments on potential games for general (possibly uncountable) type and action spaces.  In more recent work
\cite{blanchet2014remarks,blanchet2016computation}, Blanchet et
al.\ exploit a connection with optimal transport to develop methods for
computing Cournot-Nash  equilibria even for non-potential games.

However, there are also cases of interest for which the nonatomic game admits multiple
equilibria.   To address this situation,   in Section \ref{se:intro:LDP-set} we also consider the large deviation behavior of the 
 set  of  empirical distributions induced by all the 
 Nash equilibria of an $n$-player game.  We first state an analogous 
law of large numbers result in Theorem \ref{th:intro-limit-setvalued} that
shows convergence of the sequence of sets of Nash equilibria to the corresponding set of Cournot-Nash equilibria for the
nonatomic game, and then establish a corresponding LDP in Theorem 
\ref{th:LDP-setvalued}.   The choice of topology on the space of sets
of distributions for this LDP is rather subtle.   One needs to 
identify a topology that is weak enough for the LDP to hold, but
strong enough that the LDP can provide useful information. 
We show  in Corollary \ref{co:setvalued} that, indeed, our LDP
provides  interesting information on the probability of outliers or
rare equilibria even in the non-unique setting.  
Additionally, as elaborated below, in Section \ref{se:PoA}, we also 
show that the LDP is useful for obtaining  interesting asymptotic
results on the price of anarchy.

Our results  appear to be the first  LDPs for any kind for large games.
Philosophically, the paper that is closest to ours is that of Menzel
\cite{menzel2016inference}, which adopts a similar statistical
perspective to large-$n$ asymptotics in  order to derive a central
limit theorem in addition to a law of large numbers like Theorem
\ref{th:limit}. Although the model specification in \cite{menzel2016inference} is very different from our own, Menzel interprets his results as
``expansions'' of the $n$-player games around the  nonatomic game
``limit'', which is useful because the latter is typically more
tractable. Likewise, our results provide asymptotics for $n$-player
quantities in terms of quantities derived from the associated
nonatomic game, namely, the rate functions in Theorems
\ref{th:intro-LDP} and \ref{th:intro-LDP-setvalued}). 
However, rather than addressing econometric questions as in Menzel, we
focus on the probabilistic nature of equilibria
arising from a large number of random types.

Finally, we apply our large deviation analysis to derive
high-probability bounds of the so-called \emph{price of anarchy} as
the number of agents grows. 
The \emph{price of anarchy}, a term first introduced by Koutsoupias and Papadimitriou
\cite{koutsoupias1999worst}, is a measure of the degradation of efficiency 
in a system due to the selfish behavior of its agents, and it is defined roughly as follows. 
Given a type vector $\vec{w} \in \W^n$, the socially
optimal cost is the least  average cost of all players over
all associated admissible type-action pairs, and  
the price of anarchy of the $n$-player game 
is the ratio of the worst-case (or highest) average cost
induced by any  Nash equilibrium  to the corresponding socially optimum
cost (see Section \ref{se:PoA} for  a precise definition). 
The price of anarchy measures the degradation of efficiency 
in a system due to the selfish behavior of its agents.

\subsection{Motivation}
\label{subs-intro-mot}

The motivation for this study is twofold.  Firstly, 
our results apply to a class of games introduced  
by Rosenthal in \cite{rosenthal1973class} called \emph{congestion games}, 
which have found widespread application in modeling traffic routing,
in both physical and communications networks,  particularly in the field of algorithmic game theory
\cite{algorithmicgametheory}.  
In the context of traffic modeling, the congestion game is played on a network, represented by a finite graph, 
and the type of an agent is associated with a certain source-destination pair, represented by a pair of vertices in the graph.  
The distribution of types could be assumed to  be known from historical data. 
 Given a realization of
these types, the agents of the game can be viewed as drivers who competitively choose their routes (between their associated source and destination) to minimize travel time, leading to a corresponding traffic outcome  determined by the 
Nash equilibrium (or the set of Nash equilibria, when multiple exist).
  
In managing network traffic, an important quantity is the average
travel time (latency) faced by the agents.  
A central planner managing the network might prefer to assign to each
agent a \emph{socially optimal} route, which minimizes the
average travel time, but this is rarely feasible. 
When agents choose routes
selfishly, to minimize their own travel times, the resulting
social cost or average travel time 
is typically socially suboptimal,
and the price of anarchy is a popular measure of this suboptimality \cite{roughgarden2002bad}. 
In Section \ref{se:congestion}, we describe the class of congestion
games, and illustrate how our main results can be used to provide 
new probabilistic bounds on the price of anarchy for such games. 
In particular, Corollary \ref{co:PoA-congestion} shows how to
translate a bound on the price of anarchy in a nonatomic game into a 
high probability bound on the price of anarchy in the corresponding finite (but
large) game.  In particular,  our results complement existing
worst-case bounds such as those  of Christodoulou and Koutsoupias \cite{christodoulou2005price} 
on the price of anarchy for  $n$-player congestion games determined by a 
class of linear cost functions  by providing with-high-probability bounds for the
price of anarchy arising from  a fixed
 cost function in that class.

While we focus on congestion games as a motivating example, our
framework  encompasses many different types of large static games
appearing in applications, with notable examples including \emph{entry
  games}
\cite{berry1992estimation,bresnahan1991empirical,bajari2010identification}
and \emph{auctions} \cite{krishna2009auction,klemperer-guide}. In both
of these examples, it is more natural to interpret agents as
\emph{maximizing a payoff},  which  we identify with $-F$, the
  negative of the cost function.  A
prototypical entry game, borrowed from \cite{berry1992estimation}, has
$\X=\{0,1\}$, an arbitrary type space $\W$, and payoff $-F(m,w,x) =
x[f(m^x\{1\}) + g(w)]$, for some functions $f$ and $g$, where $f$ is decreasing. The action
$x=1$ means the agent ``enters the market.'' An agent that does not enter receives no payoff, while an
agent that enters receives a payoff which is decreasing in the
fraction $m^x\{1\}$ of agents that enter. All of our main results
apply to entry games, as long as $f$ and $g$ are continuous.  We
discuss entry games in somewhat more detail in Section
\ref{se:conditional-limit}, as an illustration of our conditional
limit theorem. 

On the other hand, our results do not apply to many models of
auctions, for which the payoff function is
discontinuous. More specifically, a
typical auction model has $\W = \X \subset [0,\infty)$ and a payoff
function $F(m,w,x)$ with discontinuities at points where $x =
\max\mathrm{supp}(m^x)$, where $\mathrm{supp}(m)$ represents the
support of the distribution $m$.  For instance, in an auction of a single unit
of a single good, the classical first-price auction has payoff
$-F(m,w,x) = (w-x)1_{\{x \ge \max\mathrm{supp}(m^x)\}}$,  with the
type $w$ representing the intrinsic value of the good. That is, when
the bid $x$ of a given agent becomes the maximum bid, the payoff of
the agent jumps from zero to $w-x$. It is not clear if our main
results should still hold in the presence of such discontinuities.  
Extending our results to include these other applications would be an
interesting problem for future work.

An additional motivation for our work relates to the study of Nash equilibria in
\emph{dynamic} $n$-player games, on which a vibrant literature has emerged
recently. These games 
arise in a variety of settings and are harder to analyze than
static games. 
Various law of large numbers type limit theorems and approximation
results are now fairly well understood, both in discrete time
\cite{weintraub2008markov,adlakha2008oblivious,adlakha2013mean,gomes2010discrete}
and in continuous time
\cite{lasrylionsmfg,cardaliaguet2015master,lacker2014general,fischer-mfgconnection,carmonadelarue-mfg},
and are expressed in terms of associated dynamic games with a continuum
of agents which largely go by the name of \emph{mean field games}.  
The present work grew in part out of early efforts to understand large
deviations in dynamic mean field games, especially in the case when 
the mean field game admits multiple equilibria. 
In many dynamic models, the random variables $\{W_i\}$ which we called
\emph{types} are better interpreted as \emph{noises}.  For instance,
the continuous time models typically involve controlled diffusion
processes driven by a sequence $\{W_i\}$ of  i.i.d.\ Brownian motions.  
This \emph{noise} interpretation is equally valid for the static games
of this paper, if we think of $W_i$ as a random shock
 to agent $i$.  We hope that our large deviation analysis of static
 games be useful not only on its own merit but also as a first step
 toward understanding large deviations in dynamic games.

\section{Statements of Main Results}
\label{sec-mainres}

In this section, we precisely state our results,
  the proofs of which are presented in Section \ref{se:proofs}, with some
auxiliary results required for the proofs deferred to Appendices  \ref{ap:vietoris}
 and \ref{ap:congestiongames}.
In what follows, given a metric space $\S $,  we let 
$\P(\S)$ denote the space of Borel probability measures on $\S$,
equipped  with the topology of  weak convergence.
We will refer to convergence in this topology as convergence in distribution, and denote this convergence by 
$m_n \rightarrow m$, 
which we recall   means
that $\int\varphi\,dm_n \rightarrow \int\varphi\,dm$ for every bounded
continuous function $\varphi$ on $\S$. We will most often consider
the case $\S = \W$ or $\S = \W \times \X$. 
 Throughout the paper, we make the following assumptions on the model.

\begin{standingassumption*}
\label{as-main}
The following model parameters are given: 
\begin{enumerate}
\item The \emph{action space} $\X$ is a compact metric space. 
\item The \emph{type space} is a complete separable metric space $\W$. 
\item  The constraint map $\C$, which maps elements of $\W$ to nonempty closed subsets of $\X$, 
is continuous. Here, continuity of the set-valued map $\C$ means 
 both that the graph $\mathrm{Gr}(\C) = \{(w,x) \in \W \times \X : x \in \C(w)\}$ is closed and that, if $w_n \rightarrow w$ in $\W$ and $x \in \C(w)$, then there exist $n_k$ and $x_{n_k} \in \C(w_{n_k})$ such that $x_{n_k} \rightarrow x$. 
\item  The \emph{objective function} $F : \P(\W\times\X) \times \W \times \X \rightarrow \R$ is bounded and continuous,  
where $\P(\W\times\X) \times \W \times \X$ is equipped with the product topology. 
\end{enumerate}
\end{standingassumption*}

The compactness of $\X$ assumed in (1) is important but could likely be replaced by a coercivity assumption on $F$. 
In our main application to congestion games, both sets $\W$ and $\X$ are finite, in which case the continuity assumptions (3) and (4) hold automatically. 
Fix throughout the paper an arbitrary probability space
$(\Omega,\F,\PP)$, and assume it is rich enough to support all of the
random variables of interest. 
We also assume throughout that  for each $n \ge 1$ and each type vector $\vec{w} =
(w_1,\ldots,w_n) \in \W^n$, the  
set of Nash equilibria with type vector $\vec{w}$ is non-empty.

\subsection{Nonatomic games and Cournot-Nash equilibria}
\label{subs-nonatomic}

Let $\hat{x}^n_i: \W^n \mapsto \X$ be measurable functions such that $(\hat{x}^n_1(\vec{w}),\ldots,\hat{x}^n_n(\vec{w}))$ is a Nash equilibrium with type vector $\vec{w}$, for each $\vec{w} \in \W^n$ (it is
shown in Lemma \ref{le:measurable-selection} that such a measurable
selection always exists under our assumptions).   Now,  suppose that
$W_1,\ldots,W_n$ are the i.i.d.\ types  sampled from a 
distribution $\lambda_0 \in {\mathcal P}(\W)$, which we fix once and for all. Let
$X^n_i=\hat{x}^n_i(W_1,\ldots,W_n)$ denote the associated random Nash equilibrium vector. 
The equilibrium type-action distribution is the random probability measure (on $\W\times\X$) given by 
\[
\mu_n := \frac{1}{n}\sum_{i=1}^n\delta_{(W_i,X^n_i)}.
\]
Our main results concern the asymptotic behavior of $\{\mu_n\}$,
which, as mentioned in the introduction, is 
expressed in terms of  equilibria for the corresponding
\emph{nonatomic game}, also called \emph{Cournot-Nash equilibria},
 defined as follows. 

\begin{definition}[Cournot-Nash equilibria]
\label{def-CNeqb}
For $\lambda \in \P(\W)$,  the set $\M(\lambda)$ of \emph{Cournot-Nash
  equilibria with type distribution $\lambda$} is defined as the set of $m \in
\P(\W\times\X)$ with first marginal equal to $\lambda$ that satisfy 
\[
m\left\{(w,x) \in \W\times\X : x \in \C(w), \ F(m,w,x) = \inf_{y \in \C(w)}F(m,w,y)\right\}=1, 
\]
that is,  $x \in \C(w)$ and $F(m,w,x) = \inf_{y \in \C(w)}F(m,w,y)$ hold for $m$-almost every $(w,x)$.
\end{definition}

Intuitively, a Cournot-Nash equilibrium $m \in \M(\lambda)$ describes
an equilibrium distribution of type-action pairs in a game consisting of a
continuum of infinitesimally small agents.  Although, in an $n$-player game (pure-strategy) Nash equilibria need not
in general exist,
 a standard argument in Proposition \ref{pr:existence}
below (adapted from \cite{mas1984theorem}) shows that there always
exists a  Cournot-Nash equilibrium, i.e., $\M({\lambda}) \neq
\emptyset$ for all $\lambda \in \P(\W)$.

\subsection{Large deviation results for sequences of Nash equilibria}
\label{subs-ld1}

In a Cournot-Nash equilibrium 
 no individual agent has direct influence on the equilibrium
 distribution $m$. Agents thus optimize independently, facing i.i.d.\
 types, and a law of large numbers heuristic suggests that $m$ should,
 in equilibrium, agree with the distribution of type-action
 pairs. This heuristic is justified by the following rigorous result: 

\begin{theorem} \label{th:intro-limit} 
Given that agent types are i.i.d.\ with  distribution $\lambda_0 \in \P (\W)$, 
for any metric $d$  on $\P(\W\times\X)$ compatible with weak
convergence, it holds with probability one that
$d(\mu_n,\M ({\lambda_0})) := \sup_{m \in \M ({\lambda_0})}d(\mu_n,m) \rightarrow 0$.
\end{theorem}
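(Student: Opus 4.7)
The plan is a standard compactness and continuous-mapping argument, carried out in three steps: (i) almost-sure tightness of $\{\mu_n\}$, (ii) identification of every a.s.\ subsequential weak limit as an element of $\M(\lambda_0)$, and (iii) conversion to the set-distance statement by a subsequence argument. Since all metrics compatible with weak convergence on $\P(\W\times\X)$ induce the same topology, it suffices to work with a single one. For tightness, the $\W$-marginal of $\mu_n$ is $\frac{1}{n}\sum_{i=1}^n\delta_{W_i}$, which by the strong law of large numbers for i.i.d.\ samples converges almost surely weakly to $\lambda_0$; the $\X$-marginal lies in the compact $\P(\X)$ by compactness of $\X$. Hence on a full-probability event $\{\mu_n\}$ is tight, and Prokhorov's theorem delivers subsequential weak limits $m^*$.

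I would then show that any such a.s.\ limit $m^*$ lies in $\M(\lambda_0)$. The $\W$-marginal of $m^*$ is $\lambda_0$ by the above. Since $\mathrm{Gr}(\C)$ is closed by the standing assumption and $\mu_n(\mathrm{Gr}(\C))=1$ for every $n$, the portmanteau theorem yields $m^*(\mathrm{Gr}(\C)) = 1$. For the optimality condition I introduce
\[ G(m,w) := \inf_{y \in \C(w)} F(m,w,y). \]
By Berge's Maximum Theorem, applied to the continuous function $F$ and the continuous compact-valued correspondence $\C$, the function $G$ is continuous on $\P(\W\times\X)\times\W$ and bounded by $\|F\|_\infty$. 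The $n$-player Nash condition is exactly
\[ \int \bigl[F(\mu_n,w,x) - G(\mu_n,w)\bigr]\, d\mu_n(w,x) = 0. \]
To pass to the limit, I decompose
\[ \int F(\mu_n,w,x)\,d\mu_n - \int F(m^*,w,x)\,dm^* = \int \bigl[F(\mu_n,\cdot,\cdot)-F(m^*,\cdot,\cdot)\bigr]\,d\mu_n + \int F(m^*,\cdot,\cdot)\,d(\mu_n-m^*), \]
and similarly for $G$. The second term vanishes by weak convergence since $F(m^*,\cdot,\cdot)$ is bounded continuous. For the first, tightness of $\{\mu_n\}$ allows me to reduce to a compact $K \subset \W\times\X$ modulo an arbitrarily small tail, on which joint continuity of $F$ on the compact set $(\{\mu_n\}\cup\{m^*\})\times K$ forces uniform convergence $F(\mu_n,\cdot,\cdot) \to F(m^*,\cdot,\cdot)$. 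Treating $G$ identically then yields
\[ \int \bigl[F(m^*,w,x) - G(m^*,w)\bigr]\, dm^*(w,x) = 0. \]
Since the integrand is nonnegative on $\mathrm{Gr}(\C)$ and $m^*(\mathrm{Gr}(\C))=1$, it must vanish $m^*$-a.e., giving the defining property of $\M(\lambda_0)$.

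The main obstacle is this passage to the limit in the Nash integral identity, where both the integrand and the integrating measure depend on $n$; the crucial ingredients are the product-topology joint continuity of $F$ from the standing assumption, the continuity of $G$ provided by Berge, and the almost-sure tightness established in step (i). Once these are in hand, a standard subsequence-of-subsequences argument converts the subsequential limit characterization into the desired convergence of the set distance to zero: if the distance to $\M(\lambda_0)$ failed to vanish on some event of positive probability, tightness would allow extracting a further weakly convergent subsubsequence whose limit, by step (ii), would be forced to lie in $\M(\lambda_0)$, yielding a contradiction.
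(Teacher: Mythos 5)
Your overall architecture (almost-sure tightness, identification of subsequential limits, subsequence-of-subsequences) is sound and parallels the paper's, which packages the identification step into upper hemicontinuity of an equilibrium correspondence. But there is a genuine gap at the heart of step (ii): the claim that ``the $n$-player Nash condition is exactly $\int[F(\mu_n,w,x)-G(\mu_n,w)]\,d\mu_n=0$'' with $G(m,w)=\inf_{y\in\C(w)}F(m,w,y)$ is false. When agent $i$ deviates from $x_i$ to $y$, the empirical measure itself changes from $\mu_n$ to $\mu_n+\frac{1}{n}(\delta_{(w_i,y)}-\delta_{(w_i,x_i)})$, so the actual Nash condition is
\[
F(\mu_n,w_i,x_i)=\inf_{y\in\C(w_i)}F\Bigl(\mu_n+\tfrac{1}{n}\bigl(\delta_{(w_i,y)}-\delta_{(w_i,x_i)}\bigr),w_i,y\Bigr),
\]
and the right-hand side need not equal $G(\mu_n,w_i)$. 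In a congestion game, for instance, switching routes raises the load on the new route by $1/n$, so evaluating the deviation cost at the unperturbed $\mu_n$ systematically underestimates it; a finite-player Nash equilibrium is in general only an approximate minimizer of $F(\mu_n,w_i,\cdot)$. This self-influence correction is exactly why the paper works with the function $G(m,u,w,x)=F(m,w,x)-\inf_{y\in\C(w)}F(m+u(\delta_{(w,y)}-\delta_{(w,x)}),w,y)$ and proves its joint continuity (including at $u=0$) via Berge's theorem, which is what lets the correction vanish in the limit.

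The gap is repairable along your own lines: replace your identity by the inequality $\int[F(\mu_n,w,x)-G(\mu_n,w)]\,d\mu_n\le\delta_n$, where $\delta_n$ bounds the discrepancy between the perturbed and unperturbed infima. Since the perturbed measures differ from $\mu_n$ by at most $2/n$ in total variation, they converge to $m^*$ along the same subsequence, so all measures involved lie in a single compact subset of $\P(\W\times\X)$; uniform continuity of $F$ on (that compact set)$\times K\times\X$ for a compact $K\subset\W$ carrying most of the mass, together with boundedness of $F$ on the tail, gives $\delta_n\to 0$. Your limit passage then yields $\int[F(m^*,w,x)-G(m^*,w)]\,dm^*\le 0$, and nonnegativity of the integrand on $\mathrm{Gr}(\C)$ finishes the argument as you wrote. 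Without this correction, however, the proof as stated does not establish the Nash property of the prelimit measures that it then passes to the limit.
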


We prove a somewhat more general form of this result in Section  \ref{se:proofs} (see  
Theorem \ref{th:limit}(ii) therein), which allows for  approximate Nash equilibria 
and correlated types, although we do not push this result to the utmost
generality because it is not the main novelty of the paper.

\begin{remark}
\label{rem-mixed}
For an idea of how to adapt Theorem \ref{th:intro-limit} to mixed
strategies, which we do not explore in this paper, see \cite[Theorem
4.2]{blanchet2014nash}. 
\end{remark}

We know from Theorem \ref{th:intro-limit} that the limit points of
$\{\mu_n\}$ lie in the set $\M({\lambda_0})$.  Our first main result, Theorem \ref{th:intro-LDP} below, lets us estimate how unlikely it is that $\mu_n$ remains ``far'' in some sense from this limiting set.  
 To state the theorem precisely, we introduce some definitions. 
Write $\lambda \ll \lambda_0$ when $\lambda$ is absolutely continuous with respect to $\lambda_0$, and define the relative entropy as usual by
\begin{align}
H(\lambda | \lambda_0) := \int_\W\frac{d\lambda}{d\lambda_0}\log\frac{d\lambda}{d\lambda_0}\,d\lambda, \text{ for } \lambda \ll \lambda_0, \quad\quad H(\lambda | \lambda_0) = \infty \text{ otherwise}. \label{def:entropy}
\end{align}
Define 
\begin{equation}
\label{def-cneqb}
\M = \bigcup_{\lambda \in \P(\W)}\M(\lambda), 
\end{equation}
 to be the set of
all Cournot-Nash equilibria, with any type distribution. For $m \in
\P(\W\times\X)$ let $m^w$ and $m^x$ denote the first and second
marginals, respectively, of $m$.   Throughout the paper, we adopt the
convention that $\inf\emptyset = \infty$  and $\sup \emptyset =
  -\infty$.

\begin{theorem} \label{th:intro-LDP}
Assume that $\M({\lambda})$ is a singleton for each
$\lambda \in \P(\W)$ with $\lambda \ll \lambda_0$. Then, for every
measurable set $A \subset \P(\W\times\X)$, 
\begin{align*}
\limsup_{n\rightarrow\infty}\frac{1}{n}\log\PP(\mu_n \in A) &\le -\inf_{m \in \overline{A} \cap \M}H(m^w|\lambda_0), \\
\liminf_{n\rightarrow\infty}\frac{1}{n}\log\PP(\mu_n \in A) &\ge -\inf_{m \in A^\circ  \cap \M}H(m^w|\lambda_0),
\end{align*}
where $A^\circ $ and $\overline{A}$ denote the interior and closure,
respectively, of $A$. In other words, $\{\mu_n\}$ satisfies  an LDP  
 on $\P(\W\times\X)$ with (good) rate function 
\[  m \mapsto    \begin{cases}
H(m^w | \lambda_0) &\text{if } m \in \M, \\
\infty &\text{otherwise.}
\end{cases}
\]
\end{theorem}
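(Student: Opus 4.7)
The plan is to transfer Sanov's theorem for the type empirical measure $\lambda_n := \frac{1}{n}\sum_{i=1}^n \delta_{W_i}$ through the (essentially single-valued) Cournot-Nash correspondence $\M(\cdot)$ to obtain the LDP for $\mu_n$. Since $W_1,\ldots,W_n$ are i.i.d.\ with law $\lambda_0$, Sanov's theorem provides an LDP for $\lambda_n$ on $\P(\W)$ with good rate function $H(\cdot|\lambda_0)$, whose level sets $K_\alpha := \{\lambda : H(\lambda|\lambda_0) \le \alpha\}$ are compact and consist entirely of measures absolutely continuous with respect to $\lambda_0$; hence by the uniqueness hypothesis $\M(\lambda)$ is a singleton for every $\lambda \in K_\alpha$. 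Note also that by definition $\mu_n^w = \lambda_n$. The key deterministic ingredient I will quote from Theorem \ref{th:limit}(ii) is a closed-graph property for $\M(\cdot)$: if $\vec{x}^{(k)}$ is a Nash equilibrium for a type vector $\vec{w}^{(k)} \in \W^{n_k}$ with $\frac{1}{n_k}\sum_i \delta_{w_i^{(k)}} \to \lambda$ in $\P(\W)$, then every weak limit point of $\frac{1}{n_k}\sum_i \delta_{(w_i^{(k)}, x_i^{(k)})}$ lies in $\M(\lambda)$.

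\textbf{Upper bound.} Assume $A$ is closed (the measurable case follows by replacing $A$ by $\overline{A}$). Fix $\alpha$ strictly less than $\inf_{m \in A \cap \M} H(m^w|\lambda_0)$. I claim there exist an open $V \supset K_\alpha$ and $N \in \N$ such that, for every $n \ge N$ and every Nash equilibrium $\vec{x}$ with type vector $\vec{w}$ satisfying $\lambda_n(\vec{w}) \in V$, the corresponding empirical distribution $\frac{1}{n}\sum_i \delta_{(w_i, x_i)}$ is not in $A$. Failure would produce sequences $(\vec{w}^{(k)}, \vec{x}^{(k)})$ along which $\lambda_{n_k}(\vec{w}^{(k)}) \to \lambda^* \in K_\alpha$ (by compactness of $K_\alpha$) and, by tightness from compactness of $\X$, $\mu_{n_k} \to m^* \in A$ along a further subsequence; the closed-graph property then forces $m^* \in \M(\lambda^*) \cap A$ with $H(m^{*w}|\lambda_0) \le \alpha$, contradicting the choice of $\alpha$. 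Given such $V$ and $N$, $\{\mu_n \in A\} \subset \{\lambda_n \notin V\}$ for $n \ge N$, so Sanov's upper bound yields $\limsup \frac{1}{n}\log\PP(\mu_n \in A) \le -\inf_{V^c} H(\cdot|\lambda_0) \le -\alpha$; letting $\alpha$ approach the target infimum completes this half.

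\textbf{Lower bound.} Assume $A$ is open, and fix $m \in A \cap \M$ with $H(m^w|\lambda_0) < \infty$ (otherwise the bound is trivial). Uniqueness gives $\M(m^w) = \{m\}$. A dual contradiction argument produces an open neighborhood $V$ of $m^w$ in $\P(\W)$ and $N \in \N$ such that, for every $n \ge N$ and every Nash equilibrium with type vector $\vec{w}$ satisfying $\lambda_n(\vec{w}) \in V$, the corresponding empirical distribution lies in $A$: otherwise the closed-graph property combined with $\M(m^w) = \{m\}$ would force a weak limit point equal to $m$, contradicting $\mu_{n_k} \notin A$ for an open $A$. Then $\PP(\mu_n \in A) \ge \PP(\lambda_n \in V)$, and Sanov's lower bound delivers $\liminf \frac{1}{n}\log\PP(\mu_n \in A) \ge -\inf_{V} H(\cdot|\lambda_0) \ge -H(m^w|\lambda_0)$. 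Optimizing over $m \in A \cap \M$ concludes the bound. Goodness of the rate function follows because $\{m \in \M : H(m^w|\lambda_0) \le \alpha\}$ is closed (by the closed-graph property) inside the compact set $\{m \in \P(\W\times\X) : m^w \in K_\alpha\}$.

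\textbf{Main obstacle.} The genuinely hard part is the deterministic closed-graph property for the Cournot-Nash correspondence quoted from Theorem \ref{th:limit}(ii); once this is in hand the LDP follows from Sanov via the two compactness-and-subsequence arguments above. That is the step where the continuity of $F$ and of the constraint map $\C$ are used essentially, and where one must argue that $n$-player Nash optimality, which involves the self-referential presence of $\mu_n$ inside $F(\mu_n,\cdot,\cdot)$, passes to Cournot-Nash optimality in the limit.
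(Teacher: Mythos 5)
Your proof is correct, and it reaches the same two essential ingredients as the paper — Sanov's theorem for the type empirical measure and the joint upper hemicontinuity of the equilibrium correspondence $(\lambda,u)\mapsto\NN(\lambda,0,u)$ as $u=1/n\to 0$ — but it assembles them differently. The paper first proves a full LDP for the random \emph{set} of equilibria $\NN_n$ (Theorem \ref{th:LDP-setvalued}) by viewing $\NN$ as a continuous map into the space $\mathfrak{C}$ of closed sets with the upper Vietoris topology and invoking the contraction principle; Theorem \ref{th:intro-LDP} is then extracted from that set-valued LDP via the inclusions $\PP(\mu_n\in A)\le\PP(\NN_n\cap A\neq\emptyset)$ and $\PP(\mu_n\in A)\ge\PP(\NN_n\subset A)$, with the uniqueness hypothesis collapsing the resulting lower bound to the stated form. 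You instead bypass the Vietoris machinery entirely and hand-roll the transfer: for the upper bound, an open neighborhood $V$ of the compact sub-level set $K_\alpha$ on which no equilibrium can land in $A$ for large $n$, giving $\{\mu_n\in A\}\subset\{\lambda_n\notin V\}$; for the lower bound, a neighborhood of $m^w$ on which every equilibrium must land in $A$, giving the reverse inclusion. Both contradiction arguments are sound (the negation correctly produces $n_k\to\infty$, tightness of $\{\mu_{n_k}\}$ follows from tightness of the first marginals plus compactness of $\X$, and the limit point lands in $\M(\lambda^*)$ by the upper hemicontinuity established in Proposition \ref{pr:Nash-UHC} — which, incidentally, is the right citation for your ``closed-graph property'' rather than Theorem \ref{th:limit}(ii), though the latter's proof contains the same statement). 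What your route buys is a shorter, more elementary and self-contained derivation of Theorem \ref{th:intro-LDP} that in effect re-proves the relevant special case of the contraction principle by hand; what it gives up is the set-valued LDP itself, which the paper needs independently for the non-unique regime (Theorem \ref{th:intro-LDP-setvalued}, Corollary \ref{co:setvalued}) and which makes the deduction of both bounds, including the general lower bound \eqref{th:def:LDP-lowerbound} without uniqueness, essentially mechanical.
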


Theorem \ref{th:intro-LDP} follows from a more general 
result, Theorem \ref{th:LDP}, proved in Section \ref{subs-pf-ldp}. 
In applications, one can use Theorem \ref{th:intro-LDP} to estimate
the asymptotic probabilities of what are best interpreted as
\emph{rare equilibrium outcomes}.  
Given an event $A \subset \P(\W\times\X)$ whose closure is disjoint
 from $\M({\lambda_0})$,  for example,  $A = \{m \in \P(\W\times\X) : d(m,\M({\lambda_0})) \ge
\epsilon\}$, 
Theorem \ref{th:intro-limit} says that $\PP(\mu_n \in A)
\rightarrow 0$, and Theorem \ref{th:intro-LDP} says that this happens
exponentially quickly, making  the event 
\emph{rare} in the sense that  roughly $\PP(\mu_n \in A) \approx
e^{-nc_A}$ for a constant $c_A > 0$. 
Indeed, it is easy to show (see Lemma \ref{le:inf>0} below) that $c_A := \inf_{m \in A  \cap
  \M}H(m^w|\lambda_0) > 0$ for  the particular set  $A$ chosen above, 
 so that the upper bound of Theorem \ref{th:intro-LDP} is nontrivial.

For a more tangible application, for a closed set $B \subset \X$ we can estimate the probability
\[
\PP\left(X^n_i \in B \text{ for some } i\right) = \PP\left(\mathrm{supp}(\mu_n^w) \cap B \neq \emptyset\right),
\]
that the action of some agent belongs to the set $B$;
here $\mathrm{supp}(m)$ denotes the support of a measure $m$. For
instance, in a traffic congestion game, this event could represent
some agent utilizing a seemingly inefficient or slow route. This event
is ``rare'' as long as $B$ does not intersect the support of
$m_0^x$ 
where $m_0$ is the unique element of
$\M({\lambda_0})$.  Again, by ``rare'' we mean $\inf\{H(m^w|\lambda_0) : m \in \M, \ \mathrm{supp}(m^x) \cap B \neq \emptyset\} > 0$, so that the upper bound of Theorem \ref{th:intro-LDP} is nontrivial.

Theorem \ref{th:intro-LDP} is of course related to Sanov's theorem and indeed reduces to it in degenerate cases (e.g., when $\X$ is a singleton).
Our framework also admits an analog of Cram\'er's theorem: If $\X$ is a subset of a Euclidean space, then we can estimate probabilities involving the \emph{average} of agents' actions, such as $\PP(\frac{1}{n}\sum_{i=1}^nX^n_i \in B)$ for $B \subset \X$.

A full LDP, which explicitly characterizes  asymptotic large deviation upper and lower
bounds provides information [about the system] that cannot be obtained by just
one-sided bounds.  Specifically,  in the spirit of the so-called Gibbs conditioning principle
(see, for instance, \cite{csiszar1984sanov,dembozeitouni}), 
the LDP  of Theorem \ref{th:intro-LDP} can be
used to derive the following \emph{conditional limit theorem}, which
tells us about the typical behavior of $\mu_n$ given that a rare event
of the form $\{\mu_n \in A\}$ occurs:

\begin{theorem} \label{th:intro-conditional-limit} 
Let $A \subset \P(\W\times\X)$ be measurable  and define 
\begin{eqnarray}
\label{IA}
I(A)  & :=  & \inf\left\{H(\lambda | \lambda_0) : \lambda \ll \lambda_0, \
  \overline{A} \cap \M(\lambda) \neq \emptyset\right\}, \\
S(A) & := &  \left\{m \in \overline{A} \cap \M : H(m^w |\lambda_0) = I(A)\right\}. \label{def:S(A)}
\end{eqnarray}
Suppose $I(A) < \infty$. Then  $S(A)$ is nonempty and compact.  
Assume that 
\begin{align}
I(A)   = \inf\left\{H(\lambda | \lambda_0) : \lambda \ll \lambda_0, \
  \M(\lambda) \subset A^\circ\right\},   \label{def:conditional-assumption}
\end{align}
and  also that $\mathbb{P}(\mu_n \in A)$ is nonzero for all sufficiently
large $n$.
Then, letting $d$ denote any
metric  on $\P(\W\times\X)$ compatible with weak convergence, for each
$\epsilon > 0$ there exists $c > 0$ such that, for all sufficiently
large $n$, 
\begin{align}
\label{decay}
\PP\left(\left. d(\mu_n,S(A)) \ge \epsilon \right| \mu_n \in A\right) \le e^{-cn}.
\end{align}
In particular, every limit point of the sequence of conditional distributions
of $\mu_n$ given $\{\mu_n \in A\}$, $n \in \mathbb{N}$, is supported on
the set $S(A)$. If $S(A) = \{\nu\}$ is a singleton, then these
conditional distributions converge to the point mass at $\nu$.
Finally, if $\M(\lambda)$ is a singleton for every $\lambda \ll
\lambda_0$, then in fact \eqref{def:conditional-assumption} is
equivalent to the following condition: 
\begin{align}
I(A) = \inf_{m \in \overline{A} \cap \M}H(m^w | \lambda_0)  =
 \inf_{m \in A^\circ \cap \M}H(m^w |
 \lambda_0).  \label{def:conditional-assumption-original}
\end{align}  
\end{theorem}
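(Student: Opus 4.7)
My plan is to run a standard Gibbs-conditioning argument driven by the LDP of Theorem \ref{th:intro-LDP} (or its more general version, Theorem \ref{th:LDP}, needed when $\M(\lambda)$ is not a singleton): set $V_\epsilon := \{m : d(m,S(A)) < \epsilon\}$ and produce an exponential gap between $\PP(\mu_n \in A)$ and $\PP(\mu_n \in A \setminus V_\epsilon)$. The first task is to show $S(A)$ is nonempty and compact. I observe that $I(A) = \inf\{H(m^w\mid\lambda_0) : m \in \overline{A}\cap\M\}$, because any $m \in \M$ lies in $\M(m^w)$, so $\lambda = m^w$ is admissible in \eqref{IA}. Take a minimizing sequence $m_k \in \overline{A}\cap\M$ with $H(m_k^w\mid\lambda_0) \to I(A)$; compactness of the sublevel sets of relative entropy makes $\{m_k^w\}$ tight on $\P(\W)$, and compactness of $\X$ lifts this to tightness of $\{m_k\}$ on $\P(\W\times\X)$. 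Extract a weak limit $m_\star$; closedness of $\M$ (a consequence of joint continuity of $F$ and continuity of $\C$, as would be established in Proposition \ref{pr:existence}), together with lower semicontinuity of relative entropy, forces $m_\star \in S(A)$. Compactness of $S(A)$ then follows because it is a closed subset of the compact sublevel set $\{m : H(m^w\mid\lambda_0) \le I(A)\}$.

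Next, for the exponential decay \eqref{decay}, set $B_\epsilon := A \setminus V_\epsilon$ and $\eta := \inf\{H(m^w\mid\lambda_0) : m \in \overline{B_\epsilon}\cap\M\} - I(A)$. A rerun of the tightness-and-limit argument shows $\eta > 0$: otherwise a minimizer over $\overline{B_\epsilon}\cap\M$ would land simultaneously in $S(A)$ and in $\{d(\cdot, S(A)) \ge \epsilon\}$, a contradiction. The LDP upper bound then gives $\PP(\mu_n \in B_\epsilon) \le e^{-n(I(A)+\eta/2)}$ for large $n$. For a matching lower bound on $\PP(\mu_n \in A)$, I invoke hypothesis \eqref{def:conditional-assumption}: it equates $I(A)$ with $\inf\{H(\lambda\mid\lambda_0) : \M(\lambda) \subset A^\circ\}$, which is exactly the quantity controlling the lower bound of the general LDP over the open set $A^\circ$. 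This yields $\PP(\mu_n \in A) \ge \PP(\mu_n \in A^\circ) \ge e^{-n(I(A)+\eta/4)}$ for large $n$, and dividing gives \eqref{decay} for any $c \in (0,\eta/4)$.

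From \eqref{decay}, the conditional laws $\nu_n := \PP(\mu_n \in \cdot \mid \mu_n \in A)$ satisfy $\nu_n(\{d(\cdot,S(A)) \ge \epsilon\}) \to 0$ for every $\epsilon > 0$. For any subsequential weak limit $\nu_{n_k} \to \nu$, Portmanteau applied to the closed set $\overline{V_\delta}$ gives $\nu(\overline{V_\delta}) \ge \limsup_k \nu_{n_k}(V_\delta) = 1$; intersecting over $\delta > 0$ and using $\bigcap_{\delta > 0}\overline{V_\delta} = S(A)$ (which holds because $S(A)$ is closed) yields $\nu(S(A)) = 1$. When $S(A) = \{\nu^\star\}$, the full convergence $\nu_n \to \delta_{\nu^\star}$ follows by splitting the integral of an arbitrary bounded continuous test function into contributions near and far from $\nu^\star$, using continuity at $\nu^\star$ and exponential smallness of the far part. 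The final equivalence of \eqref{def:conditional-assumption} and \eqref{def:conditional-assumption-original} in the singleton regime is bookkeeping: uniqueness lets me identify $\lambda \leftrightarrow m(\lambda)$ with $m(\lambda)^w = \lambda$, so $\M(\lambda) \subset A^\circ$ becomes $m(\lambda) \in A^\circ$, and the infima over $\lambda$ rewrite as infima over $m$ ranging in $A^\circ \cap \M$ and $\overline{A}\cap\M$.

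The delicate step, I expect, is the lower bound in the middle paragraph. Without uniqueness, the general LDP's lower bound involves $\inf\{H(\lambda\mid\lambda_0) : \M(\lambda) \subset A^\circ\}$ rather than the simpler $\inf_{A^\circ\cap\M} H(m^w\mid\lambda_0)$; condition \eqref{def:conditional-assumption} is precisely what closes the gap between this lower bound and the upper bound $I(A) = \inf_{\overline{A}\cap\M} H(m^w\mid\lambda_0)$. The asymmetry between $\overline{A}$ on one side (search over equilibrium measures directly) and $A^\circ$ on the other (require the \emph{entire} correspondence $\M(\lambda)$ to fit in $A^\circ$) is the conceptually subtle point, and \eqref{def:conditional-assumption} is the minimal regularity on $A$ that neutralizes it.
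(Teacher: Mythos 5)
Your proposal is correct and follows essentially the same route as the paper's proof: establish nonemptiness and compactness of $S(A)$ via closedness of $\M$ and compactness of the entropy sublevel sets, then combine the upper bound of Theorem \ref{th:LDP} on the closed set $\overline{A}\cap\{d(\cdot,S(A))\ge\epsilon\}$ with the lower bound over $A^\circ$, using \eqref{def:conditional-assumption} to match the two rates and a compactness argument to show the resulting gap is strictly positive. The identification $I(A)=\inf_{m\in\overline{A}\cap\M}H(m^w|\lambda_0)$ and the bookkeeping for the singleton case are also exactly as in the paper.
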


The proof of this conditional limit theorem is given in
Section \ref{se:conditional-limit}. 
The challenge in applying Theorem \ref{th:intro-conditional-limit}
lies in checking the assumption \eqref{def:conditional-assumption}, or
equivalently \eqref{def:conditional-assumption-original} when there is
uniqueness,  and
also showing that the set $S(A)$ of \eqref{def:S(A)} is a
singleton.  The key difficulty is that the set $\M$ is
never convex in nontrivial cases, which makes the minimization
problems in \eqref{def:conditional-assumption-original} more
difficult than those that  arise from the usual Gibbs conditioning
principle.   However, these assumptions can be verified in 
  several cases of interest.  As an illustration, in Section
  \ref{se:conditional-limit} we discuss in  detail
  a simple example of an entry game in which both  assumptions
  can be verified.

Theorem \ref{th:intro-LDP} applies to a given sequence
(more precisely, triangular array)  of Nash equilibria $\{X^n_i,1 \le i \le n\}_{n
\in \mathbb{N}}$, under a crucial uniqueness assumption. 
Notice that the uniqueness assumption is imposed \emph{only at the
  limit}, for the Cournot-Nash equilibrium, and no uniqueness is
required of the equilibria of $n$-player games. It is evident that
some kind of uniqueness assumption at the limit is necessary. 
Suppose, for instance, that $\X$ contains at least two elements, that
$\C(w)=\X$ for all $w$, and that the cost function is the trivial $F
\equiv 0$. Then there is no hope for an LDP  
 because \emph{any choice of actions} is a Nash equilibrium.
Uniqueness is known to hold in various particular models as well as
for a broad class of games known as \emph{potential games}, at least
when the potential is strictly convex, and we will encounter a class
of examples in our discussion of congestion games in Section
\ref{se:congestion}.  Nonetheless, uniqueness is not to be expected in
general. 

\subsection{Large deviation results for the set of equilibria} \label{se:intro:LDP-set}

We now address the case when there are multiple Cournot-Nash
equilibria for the limiting nonatomic game.   
Let $\widehat{\NN}_n : \W^n \rightarrow 2^{\P(\W\times\X)}$ denote the set-valued map that assigns to each type vector the corresponding set of equilibrium type-action distributions:
\begin{align}
\widehat{\NN}_n(w_1,\ldots,w_n) := \left\{\frac{1}{n}\sum_{i=1}^n\delta_{(w_i,x_i)} : (x_1,\ldots,x_n) \text{ is Nash for types } (w_1,\ldots,w_n)\right\}. \label{def:intro-Nn}
\end{align}
Again, let $\{W_i\}$ be a sequence of i.i.d.\ $\W$-valued random
variables with distribution $\lambda_0$, and let $\NN_n = \widehat{\NN}_n(W_1,\ldots,W_n)$ denote the random set of equilibrium type-action distributions.

It is shown in Proposition \ref{pr:Nash-UHC} that $\widehat{\NN}_n(\vec{w})$ and $\M(\lambda)$ are always closed sets. 
Thus, in the following theorem, we topologize the space $\mathfrak{C}$
of closed subsets of $\P(\W\times\X)$ with the \emph{upper Vietoris
  topology}, generated by the base of open sets of the form $\{A \in
\mathfrak{C} : A \subset E\}$, where $E$ is an open 
 subset of   $\P(\W\times\X)$. 
See Appendix \ref{ap:vietoris} for a short discussion of the basic
properties of this topology, the most important of which is that it
topologizes \emph{upper hemicontinuity} of set-valued maps.   
First, Theorem \ref{th:intro-limit-setvalued} states that  $\NN_n$
converges almost surely to $\M({\lambda_0})$, thus establishing a
law-of-large numbers result in  the upper Vietoris topology, which we prove in Section \ref{subs-pf-lln1}.

\begin{theorem} \label{th:intro-limit-setvalued}
The sequence of random sets $\{\NN_n\}$ converges almost surely to
$\M({\lambda_0})$. 
\end{theorem}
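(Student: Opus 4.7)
The plan is to reduce upper Vietoris convergence $\NN_n \to \M(\lambda_0)$ to the quantitative statement $e(\NN_n, \M(\lambda_0)) := \sup_{m \in \NN_n} d(m, \M(\lambda_0)) \to 0$ almost surely, where $d$ is any metric compatible with weak convergence, and then to argue by contradiction via a measurable selection of ``bad'' equilibria.

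For the reduction, I would first note that $\M(\lambda_0)$ is closed (Proposition \ref{pr:Nash-UHC}) and in fact compact, since the set of probability measures on $\W \times \X$ with first marginal $\lambda_0$ is tight ($\lambda_0$ is tight by Polishness and $\X$ is compact). Hence the $1/k$-neighborhoods $E_k := \{m : d(m, \M(\lambda_0)) < 1/k\}$ form a countable base of open sets containing $\M(\lambda_0)$, and upper Vietoris convergence of the closed-valued sequence $\{\NN_n\}$ to $\M(\lambda_0)$ is equivalent to $e(\NN_n,\M(\lambda_0)) \to 0$. It is also useful to record, as a byproduct of this marginal observation, that $\bigcup_n \NN_n$ is almost surely tight, since every $m \in \NN_n$ has first marginal $\frac{1}{n}\sum_{i=1}^n \delta_{W_i}$, which converges weakly to $\lambda_0$ almost surely.

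Suppose, toward a contradiction, that $\PP(e(\NN_n, \M(\lambda_0)) \geq \epsilon \text{ i.o.}) > 0$ for some rational $\epsilon > 0$; a countable union over rationals then reduces the almost-sure statement to this case. The closed set $K_\epsilon := \{m : d(m, \M(\lambda_0)) \geq \epsilon\}$ pulls back under the jointly continuous empirical-measure map $(\vec{w}, \vec{x}) \mapsto \frac{1}{n}\sum_i \delta_{(w_i, x_i)}$ to a closed subset of $\W^n \times \X^n$. Intersecting this preimage with the graph of the Nash correspondence produces a correspondence with closed (possibly empty) values; by standard measurable selection theorems (extending the argument of Lemma \ref{le:measurable-selection}), there exists a measurable Nash selection $(\hat{x}^n_i)$ that returns an equilibrium inside $K_\epsilon$ whenever one exists and the baseline selection otherwise. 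The resulting empirical distribution $\mu_n$ then satisfies $d(\mu_n, \M(\lambda_0)) \geq \epsilon$ infinitely often with positive probability, directly contradicting Theorem \ref{th:intro-limit} (equivalently, its generalization Theorem \ref{th:limit}(ii)) applied to this measurable selection.

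The main obstacle is the measurable-selection step: one must verify that the ``bad-equilibrium'' correspondence $\vec{w} \mapsto \widehat{\NN}_n(\vec{w}) \cap (\text{preimage of } K_\epsilon)$ has a measurable effective domain and admits a measurable selector. This reduces to upper hemicontinuity of the Nash correspondence (a standard consequence of the continuity of $F$ and $\C$, together with the closedness of $\mathrm{Gr}(\C)$) combined with continuity of the empirical-measure map in $(\vec{w},\vec{x})$; once these are in hand, Kuratowski--Ryll-Nardzewski (or the Jankov--von Neumann theorem, should projection issues force universal rather than Borel measurability) closes the argument.
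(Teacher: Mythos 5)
Your proof is correct, but it takes a genuinely different route from the paper. The paper's own argument is essentially two lines: Proposition \ref{pr:Nash-UHC} shows the unified equilibrium map $\NN$ is upper hemicontinuous on the embedding space $\D(\NN)$, Lemma \ref{le:uhcVietoris} converts this into continuity into $(\mathfrak{C},\text{upper Vietoris})$, and the Glivenko--Cantelli convergence $(\frac1n\sum_i\delta_{W_i},0,\frac1n)\to(\lambda_0,0,0)$ a.s.\ then gives $\NN_n=\NN(\frac1n\sum_i\delta_{W_i},0,\frac1n)\to\NN(\lambda_0,0,0)=\M(\lambda_0)$ directly. You instead deduce the set-valued statement from the selection-based law of large numbers (Theorem \ref{th:limit}), by (a) reducing upper Vietoris convergence to $\sup_{m\in\NN_n}d(m,\M(\lambda_0))\to 0$ via compactness of $\M(\lambda_0)$, and (b) a ``pick the worst equilibrium'' measurable selection plus contradiction. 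This is not circular, since Theorem \ref{th:limit} does not rely on the set-valued result, and your selection step is sound: the bad-equilibrium correspondence has closed graph (closed graph of the Nash correspondence intersected with the continuous preimage of the closed set $K_\epsilon$), its domain is closed because projection along the compact fibers $\X^n$ is a closed map, and Jankov--von Neumann yields the universally measurable selector, exactly matching the level of measurability Theorem \ref{th:limit} tolerates. What the comparison buys: the paper's route is shorter, avoids any additional selection argument, and does not need compactness of $\M(\lambda_0)$; your route is heavier but makes explicit the quantitative excess-distance formulation and the fact that the set-valued convergence is precisely the selection-based convergence holding uniformly over all (universally measurable) equilibrium selections, including adversarial ones --- a reformulation the paper only obtains implicitly through the Vietoris machinery. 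One cosmetic caution: Theorem \ref{th:intro-limit} as printed defines $d(\mu_n,\M(\lambda_0))$ with a $\sup$ over $\M(\lambda_0)$ (evidently a typo for $\inf$); your argument uses the inf-distance, which is the intended and correct reading.
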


The next main result is an LDP  
for the \emph{set
  of Nash equilibria}. This not only does away with {the uniqueness
assumption on Cournot-Nash equilibria imposed in Theorem \ref{th:intro-LDP}, 
 but it carries more information than
Theorem \ref{th:intro-LDP} even when there is uniqueness.  
As shown in Remark \ref{rem-LDP-setvalued}, 
Theorem \ref{th:intro-LDP-setvalued} follows from a more 
general result,  Theorem \ref{th:LDP-setvalued},  established in 
Section \ref{subs-pf-ldp}. 

\begin{theorem} \label{th:intro-LDP-setvalued}
For Borel sets $\mathfrak{U} \subset \mathfrak{C}$, 
\begin{align*}
\limsup_{n\rightarrow\infty}\frac{1}{n}\log\PP(\NN_n \in \mathfrak{U}) &\le -\inf\{H(\lambda | \lambda_0) : \lambda \in \P(\W), \ \M(\lambda) \in \overline{\mathfrak{U}}\}, \\
\liminf_{n\rightarrow\infty}\frac{1}{n}\log\PP(\NN_n \in \mathfrak{U}) &\ge -\inf\{H(\lambda | \lambda_0) : \lambda \in \P(\W), \ \M(\lambda) \in \mathfrak{U}^\circ \}.
\end{align*}
In other words, $\{\NN_n\}$ satisfies an LDP  
on $\mathfrak{C}$ with (good) rate function
\begin{align}
A \mapsto \inf\left\{H(\lambda | \lambda_0) : \lambda \in \P(\W), \ \M(\lambda) = A\right\}. \label{def:setvalued-rate-function}
\end{align}
\end{theorem}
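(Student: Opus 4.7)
The crucial structural observation is that the game's anonymity makes $\NN_n$ a measurable function of the empirical type distribution $L_n := \frac{1}{n}\sum_{i=1}^n \delta_{W_i}$ alone: permuting the type vector permutes the Nash equilibria while preserving their empirical type-action measures. The plan is therefore to transfer Sanov's LDP for $L_n$, with good rate function $H(\cdot \mid \lambda_0)$, through the correspondence $\lambda \mapsto \M(\lambda)$, bridging $\widehat{\NN}_n$ and $\M$ by a uniform law of large numbers.

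\textbf{Continuity of $\M$.} I would first check that $\M : \P(\W) \to \mathfrak{C}$ is continuous in the upper Vietoris topology, i.e., that if $\lambda_k \to \lambda$ and $m_k \in \M(\lambda_k)$ converges to $m$, then $m \in \M(\lambda)$. This is a standard passage to the limit using the continuity of $F$ and $\C$, compactness of $\X$, Berge's maximum theorem for the best-response value $\inf_{y \in \C(w)}F(m,w,y)$, and the closedness of $\mathrm{Gr}(\C)$. Since $\{H(\cdot \mid \lambda_0) \le c\}$ is compact in $\P(\W)$ and continuous images of compacts are compact, the rate function in \eqref{def:setvalued-rate-function} is then automatically good on $\mathfrak{C}$.

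\textbf{Uniform LLN.} The key technical step, and the main obstacle, is a deterministic, uniform strengthening of Theorem \ref{th:intro-limit-setvalued}: for every $\lambda \in \P(\W)$ and every open $E \subset \P(\W\times\X)$ with $\M(\lambda) \subset E$, there exist a neighborhood $V$ of $\lambda$ in $\P(\W)$ and $N \in \N$ such that $\widehat{\NN}_n(\vec{w}) \subset E$ whenever $n \ge N$ and $\frac{1}{n}\sum_i \delta_{w_i} \in V$. Arguing by contradiction, if this fails I would extract $\vec{w}^{(k)} \in \W^{n_k}$ with empirical distribution converging to $\lambda$ and $m_k \in \widehat{\NN}_{n_k}(\vec{w}^{(k)}) \setminus E$; pass to a subsequential weak limit $m_k \to m$ using the compactness of $\X$; and then invoke the deterministic-type version of the $n$-player LLN (available from the ``correlated types'' generalization Theorem \ref{th:limit}(ii)) to obtain $m \in \M(\lambda) \subset E$, a contradiction. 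The substance of the argument lies here, since this step requires $n$-player convergence that is robust under varying empirical type distributions rather than just the fixed i.i.d.\ setting of Theorem \ref{th:intro-limit-setvalued}.

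\textbf{From Sanov to the LDP.} For the lower bound, given an open $\mathfrak{U}$ and $\lambda$ with $\M(\lambda) \in \mathfrak{U}$, I pick a basic open $\{A : A \subset E\} \subset \mathfrak{U}$ containing $\M(\lambda)$; the uniform LLN yields a neighborhood $V \ni \lambda$ with $\{L_n \in V\} \subset \{\NN_n \subset E\} \subset \{\NN_n \in \mathfrak{U}\}$ for large $n$, so Sanov's lower bound gives $\liminf \frac{1}{n}\log \PP(\NN_n \in \mathfrak{U}) \ge -H(\lambda \mid \lambda_0)$; optimize over $\lambda$. For the upper bound, given closed $\mathfrak{U}$ and $c < J := \inf\{H(\lambda \mid \lambda_0) : \M(\lambda) \in \mathfrak{U}\}$, each $\lambda$ in the compact sublevel set $K_c := \{H(\cdot \mid \lambda_0) \le c\}$ satisfies $\M(\lambda) \notin \mathfrak{U}$, so closedness of $\mathfrak{U}$ in the upper Vietoris topology furnishes a basic open $\{A : A \subset E_\lambda\} \ni \M(\lambda)$ disjoint from $\mathfrak{U}$, and the uniform LLN provides a neighborhood $V_\lambda$ on which $\NN_n \notin \mathfrak{U}$ eventually. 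Picking a finite subcover $V_{\lambda_1},\ldots,V_{\lambda_r}$ of $K_c$ gives $\{\NN_n \in \mathfrak{U}\} \subset \{L_n \notin \bigcup_i V_{\lambda_i}\}$ for large $n$; Sanov's upper bound applied to the closed complement (which by lower semicontinuity and goodness of $H(\cdot \mid \lambda_0)$ has infimum strictly greater than $c$) forces $\limsup \frac{1}{n}\log \PP(\NN_n \in \mathfrak{U}) \le -c$, and letting $c \uparrow J$ completes the proof.
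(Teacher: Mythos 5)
Your proposal is correct and is essentially the paper's argument: your ``uniform LLN'' is precisely the joint upper hemicontinuity of the equilibrium correspondence $(\lambda,u)\mapsto\NN(\lambda,0,u)$ at points $(\lambda,0)$ established in Proposition \ref{pr:Nash-UHC} (proved there by the same tightness-and-subsequence contradiction), and your transfer of Sanov's theorem through this correspondence, via basic open sets $E^+$ for the lower bound and a finite subcover of the compact entropy sublevel set for the upper bound, is a hand-unrolled version of the contraction principle applied exactly as in Theorem \ref{th:LDP-setvalued}. The only cosmetic difference is that the paper packages the finite-$n$ and limiting games into a single map on $\D(\NN)$ and cites the contraction principle (noting it does not require the target to be Hausdorff) rather than rederiving its two bounds directly.
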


At first, this theorem may appear too abstract to be useful, especially given that the upper Vietoris topology is rather coarse (even non-Hausdorff). On the contrary, it yields several interesting concrete results, a key example of which stems from the following simple corollary.

\begin{corollary} \label{co:setvalued}
If $E \subset \P(\W\times\X)$ is closed, then
\begin{align}
\limsup_{n\rightarrow\infty}\frac{1}{n}\log\PP\left(\NN_n \cap E \neq \emptyset\right) &\le -\inf\left\{H(m^w|\lambda_0) : m \in \M \cap E\right\}. \label{def:outliers}
\end{align}
If $E$ is open, then
\begin{align*}
\liminf_{n\rightarrow\infty}\frac{1}{n}\log\PP\left(\NN_n \subset E\right) &\ge -\inf\left\{H(\lambda|\lambda_0) : \lambda \in \P(\W), \ \M_\lambda \subset E\right\}.
\end{align*}
\end{corollary}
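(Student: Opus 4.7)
The plan is to deduce both inequalities directly from Theorem \ref{th:intro-LDP-setvalued} by choosing the appropriate Borel subsets $\mathfrak{U} \subset \mathfrak{C}$ and then simplifying the resulting rate functions. The only tool from topology I need is a characterization of open and closed sets in the upper Vietoris topology on $\mathfrak{C}$: by definition, for $U \subset \P(\W\times\X)$ open the set $\mathfrak{B}(U) := \{A \in \mathfrak{C} : A \subset U\}$ is open, and taking complements, for $F \subset \P(\W\times\X)$ closed the set $\{A \in \mathfrak{C} : A \cap F \neq \emptyset\}$ is closed. (This basic fact is presumably recorded in Appendix \ref{ap:vietoris}, which I would cite.)

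For the upper bound, given a closed set $E \subset \P(\W\times\X)$, I would set $\mathfrak{U} := \{A \in \mathfrak{C} : A \cap E \neq \emptyset\}$, which is closed in $\mathfrak{C}$ by the observation above. Since $\NN_n \in \mathfrak{C}$ (by Proposition \ref{pr:Nash-UHC}), the event $\{\NN_n \cap E \neq \emptyset\}$ coincides with $\{\NN_n \in \mathfrak{U}\}$, so Theorem \ref{th:intro-LDP-setvalued} applied to $\mathfrak{U} = \overline{\mathfrak{U}}$ yields
\[
\limsup_{n\to\infty}\tfrac{1}{n}\log\PP(\NN_n \cap E \neq \emptyset) \le -\inf\{H(\lambda | \lambda_0) : \lambda \in \P(\W), \ \M(\lambda) \cap E \neq \emptyset\}.
\]
I would then observe that $\M(\lambda) \cap E \neq \emptyset$ is equivalent to the existence of $m \in \M \cap E$ with $m^w = \lambda$, so the infimum above equals $\inf\{H(m^w | \lambda_0) : m \in \M \cap E\}$, which is \eqref{def:outliers}.

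For the lower bound, given an open set $E$, I would take $\mathfrak{U} := \{A \in \mathfrak{C} : A \subset E\}$, which is open (in fact, a basic open set) in $\mathfrak{C}$, so $\mathfrak{U}^\circ = \mathfrak{U}$. Again $\{\NN_n \subset E\} = \{\NN_n \in \mathfrak{U}\}$, and Theorem \ref{th:intro-LDP-setvalued} gives
\[
\liminf_{n\to\infty}\tfrac{1}{n}\log\PP(\NN_n \subset E) \ge -\inf\{H(\lambda | \lambda_0) : \lambda \in \P(\W), \ \M(\lambda) \subset E\},
\]
which is the stated bound (with $\M_\lambda$ understood as $\M(\lambda)$).

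There is no serious obstacle; the corollary is essentially a translation of the set-valued LDP through two convenient choices of test sets. The only point worth double-checking is that the events $\{\NN_n \in \mathfrak{U}\}$ are genuinely measurable, which follows from the measurability of the selection guaranteed by Lemma \ref{le:measurable-selection} together with the fact that $\NN_n$ takes values in $\mathfrak{C}$; and that the two infima rewrite correctly, which is just the definition $\M = \bigcup_\lambda \M(\lambda)$.
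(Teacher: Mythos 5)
Your argument is correct and coincides with the paper's own route: the identical choices of test sets $\{A \in \mathfrak{C} : A \cap E \neq \emptyset\}$ (closed) and $\{A \in \mathfrak{C} : A \subset E\}$ (open), followed by the same rewriting of the infima via $\M = \bigcup_\lambda \M(\lambda)$, appear verbatim inside the proof of Theorem \ref{th:LDP}, from which the corollary is read off. No gaps.
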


Corollary \ref{co:setvalued} can be interpreted in terms of
\emph{outliers}, or rare equilibria. Indeed, the left-hand side of
\eqref{def:outliers} is the probability that there exists a Nash
equilibrium for the $n$-player game that lies in the set $E$.  If 
$\M({\lambda_0})\cap E = \emptyset$, we know from Theorem
\ref{th:intro-limit} that equilibria in $E$ should be rare when $n$ is
large in the sense that $\PP(\NN_n \cap E \neq \emptyset) \rightarrow
0$. The bound \eqref{def:outliers} shows that this probability decays
exponentially and quantifies precisely the exponential 
decay rate.  The proofs of the large deviations results in  Theorem
\ref{th:intro-LDP-setvalued} can be found in Section \ref{subs-pf-ldp} (see
Theorem \ref{th:LDP-setvalued}) and hinge on the well-known
\emph{contraction principle}, once the $n$-player games and the nonatomic game are set on a common topological space (as in Section \ref{subs-coupling}).

It should also be mentioned that a map of the form $\mathfrak{C} \ni A
\mapsto G(A) := \sup_{m \in A}g(m) \in \R$ is upper semicontinuous
whenever $g$ is upper semicontinuous. If $g$ is continuous, and if it
is constant on a set $A$, then $G$ is continuous at $A$.  These
facts (proven in Lemma \ref{le:lhcVietoris})
can  be used to derive large deviation bounds for a sequence of random
variables of the form $\sup_{m \in \NN_n}g(m)$, 
which we interpret as the worst case value of $g$, in equilibrium. The following section investigates a somewhat more complex instance of this observation.

\subsection{Price of anarchy} \label{se:PoA}

We now provide a precise definition of the price of anarchy for both
$n$-player and nonatomic games.  We assume that $F \geq 0$, which is essentially without loss of
generality due to the boundedness assumption (4). 
For each $n$ and each type vector $\vec{w} = (w_1, \ldots, w_n) \in
\W^n$, define the set of \emph{all admissible}  type-action distributions by
\begin{align}
\widehat{\A}_n(w_1,\ldots,w_n) := \left\{\frac{1}{n}\sum_{k=1}^n\delta_{(w_k,x_k)} : x_i \in \C(w_i), \ i=1,\ldots,n\right\}. \label{def:An}
\end{align}
The \emph{average cost} of the game, for a fixed type-action distribution $m \in \P(\W\times\X)$, is defined by
\begin{align}
V(m) :=  \int_{\W\times\X} F(m,w,x)\,m(dw,dx). \label{def:V}
\end{align}
Finally the price of anarchy is the ratio of the worst-case Nash equilibrium cost to the socially optimal cost, or
\[
\mathrm{PoA}_n(\vec{w}) := \frac{\sup_{m \in
    \widehat{\NN}_n(\vec{w})}V(m)}{\inf_{m \in
    \widehat{\A}_n(\vec{w})}V(m)}, 
\]
where recall the definition of $\widehat{\NN}_n$ from 
\eqref{def:intro-Nn}.  
Recall that $\widehat{\NN}_n(\vec{w})$, and thus
  $\widehat{\A}_n(\vec{w})$, 
 is non-empty due to  our standing assumption on the existence of Nash equilibria for $n$-player
games.  Assume that  $V$ is strictly positive, which by continuity implies
that $V$ is bounded from below away from zero on the non-empty compact set
$\widehat{\A}_n(\vec{w})$, for each fixed $n$ and $\vec{w} \in
\W^n$. Moreover,  $V$ is bounded since $F$ is bounded by our standing
assumption (4).   Thus, the numerator above 
is also a finite positive number. Hence,  $\mathrm{PoA}_n(\vec{w})$ is well defined.

Finally, define the price of anarchy for the nonatomic game as
follows. For $\lambda \in \P(\W)$, set 
\begin{align}
\A(\lambda) :=  \left\{m \in \P(\W\times\X) : m^w = \lambda, \ m\{(w,x) : x \in \C(w)\}=1\right\}. \label{def:A_lambda}
\end{align}
This is simply the set of all admissible type-action distributions
for the nonatomic game with type distribution $\lambda$.  The price of anarchy is then
\[
\mathrm{PoA}(\lambda) := \frac{\sup_{m \in \M(\lambda)}V(m)}{\inf_{m \in \A(\lambda)}V(m)}.
\]
Under our standing assumptions, $V$ is bounded above and by Proposition
\ref{pr:existence},  $\M (\lambda) \neq \emptyset$ for each fixed $\lambda \in
\P(\W)$.
Again, if $V > 0$ pointwise then by continuity $V$ is bounded from below away from
zero on the non-empty compact set $\A(\lambda)$, for each fixed $\lambda \in
\P(\W)$,  and $\mathrm{PoA}(\lambda)$ is well defined. 
As before, let $\{W_i\}$ be i.i.d.\ $\W$-valued random variables with
distribution $\lambda_0$. See Section \ref{se:PoA-proofs} for the
proof of the following:

\begin{proposition} \label{pr:PoA}
Assume $V>0$ pointwise. It holds almost surely that
\[
\limsup_{n\rightarrow\infty}\mathrm{PoA}_n(W_1,\ldots,W_n) \le \mathrm{PoA}(\lambda_0).
\]
Moreover,\footnote{Equivalently, $\mathrm{PoA}_n(W_1,\ldots,W_n)$
  satisfies an LDP  on $(\R \cup \{-\infty\},\tau)$ with good rate function $r \mapsto \inf\left\{H(\lambda|\lambda_0) : \lambda \in \P(\W), \ \mathrm{PoA}(\lambda) =r\right\}$, where $\tau = \{[-\infty,a) : a \in \R \cup \{-\infty\}\}$ is the lower topology.}  for each $r$,
\begin{align*}
\limsup_{n\rightarrow\infty}\frac{1}{n}\log\PP(\mathrm{PoA}_n(W_1,\ldots,W_n) \ge r) 	&\le -\inf\left\{H(\lambda|\lambda_0) : \lambda \in \P(\W), \ \mathrm{PoA}(\lambda) \ge r\right\}, \\
\liminf_{n\rightarrow\infty}\frac{1}{n}\log\PP(\mathrm{PoA}_n(W_1,\ldots,W_n) < r) 	&\ge -\inf\left\{H(\lambda|\lambda_0) : \lambda \in \P(\W), \ \mathrm{PoA}(\lambda) < r\right\}.
\end{align*}
\end{proposition}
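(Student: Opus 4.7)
My plan is to derive both parts of the proposition from the set-valued limit and large deviation results already established for $\NN_n$, combined with their natural analogues for the admissibility correspondence $\widehat{\A}_n$. Throughout, write $\psi:\mathfrak{C}\times\mathfrak{C}\to[0,\infty]$ for the map
\[
\psi(A_1,A_2):=\frac{\sup_{m\in A_1}V(m)}{\inf_{m\in A_2}V(m)},
\]
so that $\mathrm{PoA}_n(W_1,\ldots,W_n)=\psi(\NN_n,\widehat{\A}_n(W_1,\ldots,W_n))$ and $\mathrm{PoA}(\lambda)=\psi(\M(\lambda),\A(\lambda))$.

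For the almost sure bound, I would first establish $\widehat{\A}_n(W_1,\ldots,W_n)\to\A(\lambda_0)$ almost surely in the upper Vietoris topology. This is a soft argument: every $m$ in the set has type marginal equal to the empirical type measure $\frac{1}{n}\sum_i\delta_{W_i}$, which tends to $\lambda_0$ almost surely, and is supported on the closed graph $\mathrm{Gr}(\C)$; by compactness of $\X$, every accumulation point of any sequence drawn from $\widehat{\A}_n$ lies in $\A(\lambda_0)$, which is precisely upper hemicontinuity. Combined with Theorem~\ref{th:intro-limit-setvalued} and Lemma~\ref{le:lhcVietoris} (applied to $V$ for the supremum and to $-V$ for the infimum, both continuous), this gives almost surely
\[
\limsup_{n\to\infty}\sup_{m\in\NN_n}V(m)\le\sup_{m\in\M(\lambda_0)}V(m),\qquad \liminf_{n\to\infty}\inf_{m\in\widehat{\A}_n}V(m)\ge\inf_{m\in\A(\lambda_0)}V(m).
\]
The denominator is strictly positive since $V$ is continuous and pointwise positive on the nonempty compact set $\A(\lambda_0)$; taking the ratio yields the first claim.

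For the large deviation bounds, I would extend the set-valued LDP of Theorem~\ref{th:intro-LDP-setvalued} (established as Theorem~\ref{th:LDP-setvalued}) from $\NN_n$ alone to the pair $(\NN_n,\widehat{\A}_n(W_1,\ldots,W_n))$, obtaining an LDP on $\mathfrak{C}\times\mathfrak{C}$ in the product upper Vietoris topology with good rate function
\[
(A_1,A_2)\mapsto\inf\{H(\lambda|\lambda_0):\lambda\in\P(\W),\ \M(\lambda)=A_1,\ \A(\lambda)=A_2\}.
\]
The ingredients are the same as in the one-variable case: Sanov's theorem for $\frac{1}{n}\sum_i\delta_{W_i}$, the upper hemicontinuity of $\lambda\mapsto\M(\lambda)$ built into Proposition~\ref{pr:existence}, and the upper hemicontinuity of $\lambda\mapsto\A(\lambda)$ from the closed-graph argument used in Part~1. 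Applying Lemma~\ref{le:lhcVietoris} componentwise shows that $\psi$ is upper semicontinuous, so $\{\psi\ge r\}$ is closed and $\{\psi<r\}$ is open. The upper LDP bound applied to $\{\psi\ge r\}$, together with the identity $\psi(\M(\lambda),\A(\lambda))=\mathrm{PoA}(\lambda)$, yields the first displayed inequality; the lower LDP bound on $\{\psi<r\}$ yields the second.

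The hard part is the promotion of the set-valued LDP from $\NN_n$ to the joint pair $(\NN_n,\widehat{\A}_n)$. The subtlety is that $\widehat{\A}_n(W_1,\ldots,W_n)$ is in general strictly contained in the nonatomic admissibility set $\A\bigl(\tfrac{1}{n}\sum_i\delta_{W_i}\bigr)$, so one must verify at the exponential scale that the joint rate is still governed by the nonatomic maps $\M$ and $\A$ rather than by the discrete admissibility correspondence. I expect this to be handled as in the coupling construction of Section~\ref{subs-coupling}, where all the $n$-player games are realized together with a nonatomic limit on a common probability space; the upper semicontinuity of $\psi$ then allows the small gap between $\widehat{\A}_n$ and $\A\bigl(\tfrac{1}{n}\sum_i\delta_{W_i}\bigr)$ to be absorbed into arbitrarily small topological neighborhoods without affecting the leading-order exponent.
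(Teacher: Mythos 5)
Your proposal is correct in outline and reaches the right conclusion, but it takes a genuinely different route from the paper, and the step you label as ``the hard part'' is exactly the one the paper's argument is designed to sidestep. The paper never forms a joint set-valued LDP for $(\NN_n,\widehat{\A}_n)$ on $\mathfrak{C}\times\mathfrak{C}$. Instead it packages the entire price of anarchy into a single scalar function $\mathfrak{P}(\lambda,u)=\sup_{m\in\NN(\lambda,0,u)}V(m)\big/\inf_{m\in\A(\lambda,u)}V(m)$ on the domain $\DG$, where the admissibility correspondence $\A(\lambda,u)$ is defined for \emph{all} sizes $u\in\overline{\N}^{-1}$ at once (so $\widehat{\A}_n(\vec w)=\A(\frac{1}{n}\sum_i\delta_{w_i},\frac1n)$ and $\A(\lambda)=\A(\lambda,0)$ live in one family); Lemma \ref{le:PoAusc} shows $\mathfrak{P}$ is upper semicontinuous on $\DG$ by proving $\A(\cdot,\cdot)$ is upper hemicontinuous with compact values jointly in $(\lambda,u)$, and then the proposition follows by applying Sanov's theorem directly to $(\frac{1}{n}\sum_i\delta_{W_i},\frac1n)$ to the closed set $\{\mathfrak{P}\ge r\}$ and the open set $\{\mathfrak{P}<r\}$. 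This avoids both the product upper Vietoris topology and any comparison at exponential scale between $\widehat{\A}_n$ and $\A(\frac{1}{n}\sum_i\delta_{W_i})$: the discrepancy you worry about is absorbed not by ``small topological neighborhoods'' but by the fact that the Sanov rate function charges only $u=0$, so the contraction automatically lands on $\A(\lambda,0)=\A(\lambda)$. Your route would also work --- the joint LDP you posit does follow from the contraction principle applied to the continuous map $(\lambda,u)\mapsto(\NN(\lambda,0,u),\A(\lambda,u))$ into $\mathfrak{C}\times\mathfrak{C}$, and the needed ingredient is precisely the joint upper hemicontinuity of $\A(\cdot,\cdot)$ that you already sketch for the almost sure part --- but as written you leave that promotion as an expectation rather than a proof, and you would additionally need to confirm that $\psi$ is upper semicontinuous at pairs where $\inf_{m\in A_2}V(m)$ could degenerate to zero (harmless here because the relevant $A_2$'s are nonempty and compact with $V>0$, but worth stating). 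The paper's scalar formulation buys a shorter proof with no non-Hausdorff product topology; your formulation buys a reusable joint LDP for the pair of correspondences.
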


\subsection{Congestion games} \label{se:congestion}

We now introduce the class of congestion games alluded to in Section
\ref{subs-intro-mot}. 
To specify the model, we work with a finite set $\W$ of types. Given a
finite set $E$ of \emph{elements}, the action space is the set
  $\X = 2^E\backslash \{\emptyset\}$ of nonempty subsets.  The constraint map $\C$ is arbitrary for the moment.
A continuous increasing function $c_e : [0,\infty) \rightarrow
[0,\infty)$ is given for each $e \in E$, which represents the
\emph{cost} faced by an agent when using element $e$, as a function of
the current \emph{load} or \emph{congestion} on that element.  
The cost function $F$ is defined by
\begin{equation}
\label{def-costF}
F(m,w,x) := \sum_{e \in x}c_e\left(\ell_e(m)\right), \quad \text{ where } \quad \ell_e(m) := m\{(w,x) \in \W\times\X : e \in x\}.
\end{equation}
Here $\ell_e(m)$ is the \emph{load} on the edge $e$ imposed by the
type-action distribution $m$, which is defined as the fraction of agents using
the element $e$. The cost on a route is additive along  edges, 
and the cost at each edge depends on the corresponding load. 
Notice that the type
does not enter explicitly into $F$, and its only role is to govern the
constraints.

A typical class of examples, representing a traffic network congestion game, originating with the seminal work of Wardrop \cite{wardrop1952road}, is as follows. The set $E$ is the set of edges of some (directed) graph $(V,E)$, so that an action $x \in \X$ is a set of edges. The type space $\W$ is a subset of $V^2$, so that the \emph{type} $w=(i,j)$ of an agent represents the \emph{source} $i$ and  the \emph{destination} $j$ of this agent. The constraint set $\C(w)$ is the set of all (Hamiltonian) paths connecting the source $i$ to the destination $j$, for $w=(i,j)$.

\subsubsection{Existence and uniqueness of equilibrium}
\label{subsub-pot}

Congestion games are well known to belong to the class of
\emph{potential games} \cite{monderer-shapley},  
for which (pure-strategy) Nash equilibria always exist, and for which the uniqueness assumption of Theorem \ref{th:intro-LDP} can be established simply by proving a certain function is strictly convex. Consider the function $U : \P(\W\times\X) \rightarrow \R$ given by
\begin{align}
U(m) = \sum_{e\in E}\int_0^{\ell_e(m)}\!\!\!c_e(s)\,ds. \label{def:U}
\end{align}
Because $c_e \ge 0$ is increasing, the function $t \mapsto
\int_0^tc_e(s)\,ds$ is convex, and thus $U$ is itself convex. Moreover,
recalling the definition of $\A(\lambda)$ from \eqref{def:A_lambda},
it can be shown that for each $\lambda \in \P(\W)$ the set of
minimizers of $U$ on the set $\A(\lambda)$ is precisely $\M(\lambda)$,
the set of Cournot-Nash equilibria with type
distribution $\lambda$.  Hence, when $U$ is
strictly convex, the set $\M(\lambda)$ is a singleton for every
$\lambda$. The following two propositions justify and elaborate on
these claims.  At least the first of the two is well known, but 
we provide the short proofs in Appendix \ref{ap:congestiongames} to
keep the paper self-contained. In the following, $|E|$ denotes the
cardinality of a set $E$ and for a statement $H$, $1_H$ is $1$ if
  the statement $H$ holds and is zero otherwise.

\begin{proposition} \label{pr:congestion-potential}
Fix $\lambda \in \P(\W)$. Then $m$ minimizes $U(\cdot)$ on $\A(\lambda)$ if and only if $m \in \M(\lambda)$.
\end{proposition}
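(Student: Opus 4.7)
The plan is to show this is the standard Beckmann--McGuire--Winsten / Rosenthal potential function characterization: the gradient of $U$ along feasible directions recovers the cost $F$, so a first-order condition on $U$ coincides with the Cournot--Nash condition, and convexity of $U$ upgrades this first-order condition to a global minimum.

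The first step is to compute the one-sided directional derivative of $U$. For any $m, m' \in \A(\lambda)$ and $\epsilon \in [0,1]$, set $m_\epsilon = (1-\epsilon)m + \epsilon m'$. Since $\ell_e$ is linear in $m$, we have $\ell_e(m_\epsilon) = (1-\epsilon)\ell_e(m) + \epsilon \ell_e(m')$, and the continuity and monotonicity of $c_e$ together with bounded convergence yield
\[
\left.\frac{d}{d\epsilon}U(m_\epsilon)\right|_{\epsilon=0^+} = \sum_{e \in E} c_e(\ell_e(m))\bigl(\ell_e(m') - \ell_e(m)\bigr).
\]
The key algebraic identity is that exchanging the order of summation and integration gives
\[
\sum_{e \in E} c_e(\ell_e(m))\,\ell_e(m'') = \int_{\W\times\X}\!\!\sum_{e \in x}c_e(\ell_e(m))\,m''(dw,dx) = \int F(m,w,x)\,m''(dw,dx)
\]
for any $m'' \in \P(\W\times\X)$, so the directional derivative equals $\int F(m,w,x)\bigl(m'-m\bigr)(dw,dx)$.

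For the implication $m \in \M(\lambda) \Rightarrow m$ minimizes $U$: if $m$ is a Cournot-Nash equilibrium, then for $m$-a.e.\ $(w,x)$ we have $x \in \C(w)$ and $F(m,w,x) = \inf_{y \in \C(w)}F(m,w,y)$, while for any $m' \in \A(\lambda)$ the lower bound $F(m,w,x) \ge \inf_{y \in \C(w)}F(m,w,y)$ holds $m'$-a.e.\ because $m'$ is supported on $\mathrm{Gr}(\C)$. Since $m$ and $m'$ share the marginal $\lambda$, integrating against $\lambda$ in the $w$-variable yields $\int F(m,\cdot,\cdot)\,dm' \ge \int F(m,\cdot,\cdot)\,dm$, so the directional derivative at $m$ is nonnegative along every feasible direction. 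Combined with convexity of $U$ (convexity of each $t \mapsto \int_0^t c_e(s)\,ds$ composed with the affine map $m \mapsto \ell_e(m)$), this forces $U(m) \le U(m')$.

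For the converse, suppose $m$ minimizes $U$ on $\A(\lambda)$. Then the directional derivative is nonnegative along every feasible direction, i.e.\ $\int F(m,\cdot,\cdot)\,dm' \ge \int F(m,\cdot,\cdot)\,dm$ for every $m' \in \A(\lambda)$. To exploit this, I would construct an optimally responding $m'$: by continuity of $F(m,\cdot,\cdot)$ and of the constraint map $\C$ (Standing Assumption (3)--(4)), the set-valued map $w \mapsto \mathrm{argmin}_{y \in \C(w)}F(m,w,y)$ is nonempty, closed-valued, and measurable, so a measurable selector $y^*(w)$ exists and one may take $m'(dw,dx) = \lambda(dw)\,\delta_{y^*(w)}(dx) \in \A(\lambda)$. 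For this choice, $\int F(m,\cdot,\cdot)\,dm' = \int \inf_{y \in \C(w)}F(m,w,y)\,\lambda(dw)$, which is a lower bound for $\int F(m,\cdot,\cdot)\,dm$ by the pointwise inequality. The minimizer inequality thus becomes an equality, and since the integrand $F(m,w,x) - \inf_{y \in \C(w)}F(m,w,y)$ is nonnegative $m$-a.e., it must vanish $m$-a.e., giving $m \in \M(\lambda)$.

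The one subtle point is the measurable selection in the converse direction, but the continuity hypotheses in the Standing Assumptions (plus compactness of $\X$) make this routine via, e.g., a Kuratowski--Ryll-Nardzewski type argument, and in the finite-$\W$, finite-$\X$ setting of congestion games it is immediate.
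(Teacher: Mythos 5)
Your proposal is correct and follows essentially the same route as the paper: compute the directional derivative of the potential $U$ along feasible directions, identify it with $\int F(m,\cdot,\cdot)\,d(\widetilde m - m)$, and match the resulting first-order/variational-inequality condition (via convexity of $U$) with the Cournot--Nash condition. The only difference is that you spell out the equivalence between the Cournot--Nash condition and the variational inequality (the integration against the common marginal in one direction, and the measurable best-response selection in the other), which the paper asserts ``by definition'' without detail; your version is a more complete write-up of the same argument.
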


The final proposition, regarding uniqueness of the Cournot-Nash
equilibrium, is likely suboptimal but is merely meant to illustrate
that uniqueness is not an unreasonable request of a congestion game: 

\begin{proposition} \label{pr:congestion-unique}
Enumerate $\W = \{w_1,\ldots,w_{|\W|}\}$ and $\X = \{x_1,\ldots,x_{|\X|}\}$. 
Let $\mathbb{T}$ denote the space of $|\W| \times |\X|$ stochastic matrices, i.e., matrices with nonnegative entries whose columns sum to one.
Assume $c_e$ is differentiable with a strictly positive derivative. Suppose $\lambda \in \P(\W)$ is such that the span of $\{(\lambda\{w_i\}1_{\{e\in x_j\}})_{i,j} : e \in E\}$ contains $\mathbb{T}$. Then $U$ has a unique minimizer on $\A(\lambda)$.
\end{proposition}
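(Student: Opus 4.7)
The plan is to exploit strict convexity of the potential $U$ in the edge loads (a consequence of $c_e' > 0$), and then use the spanning hypothesis to rule out any remaining redundancy in the load-to-measure map. Throughout, I parametrize $m \in \A(\lambda)$ by a kernel $K \in \mathbb{T}$ via $m(w_i,x_j) = \lambda\{w_i\}\,K_{ij}$, with $K_{ij}$ set arbitrarily on rows where $\lambda\{w_i\} = 0$ (those rows cancel from every quantity appearing below).

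\textbf{Step 1: any two minimizers share the same loads.} Since $c_e$ is differentiable with $c_e' > 0$, each primitive $\psi_e(t) := \int_0^t c_e(s)\,ds$ is strictly convex on $[0,\infty)$. Because $\ell_e : \P(\W\times\X) \to \R$ is affine and $\A(\lambda)$ is convex, $U = \sum_e \psi_e \circ \ell_e$ is convex on $\A(\lambda)$, and a minimizer exists by Propositions \ref{pr:existence}--\ref{pr:congestion-potential}. If $m_1, m_2 \in \A(\lambda)$ are both minimizers, then $\bar{m} := (m_1+m_2)/2 \in \A(\lambda)$ must also minimize $U$. Writing
\[
U(\bar{m}) = \sum_e \psi_e\!\left(\tfrac{\ell_e(m_1)+\ell_e(m_2)}{2}\right) \le \tfrac{U(m_1)+U(m_2)}{2},
\]
strict convexity of each $\psi_e$ forces $\ell_e(m_1) = \ell_e(m_2)$ for every $e \in E$.

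\textbf{Step 2: equality of loads plus spanning implies $m_1 = m_2$.} A direct computation gives
\[
\ell_e(m_k) = \sum_{i,j}\lambda\{w_i\}\,K^k_{ij}\,1_{\{e \in x_j\}} = \langle K^k, M_e\rangle_F,
\]
where $M_e$ is the matrix with entries $\lambda\{w_i\}\,1_{\{e \in x_j\}}$ and $\langle \cdot, \cdot\rangle_F$ is the Frobenius inner product. Step 1 thus says $\Delta := K^1 - K^2$ is Frobenius-orthogonal to every $M_e$, and so to the linear span $V := \mathrm{span}\{M_e : e \in E\}$. By hypothesis $\mathbb{T} \subset V$; since $V$ is a linear subspace, the real linear span of $\mathbb{T}$ is also contained in $V$. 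But $K^1, K^2 \in \mathbb{T}$, so $\Delta = K^1 - K^2$ itself belongs to $\mathrm{span}(\mathbb{T}) \subset V$. The two facts $\Delta \in V$ and $\Delta \perp V$ force $\|\Delta\|_F^2 = 0$, whence $K^1 = K^2$ and $m_1 = m_2$.

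The main obstacle is the final linear-algebra step. The orthogonality $\Delta \perp \{M_e\}$ is automatic from Step 1, but one also needs $\Delta$ itself to sit inside $\mathrm{span}\{M_e\}$; this is exactly what the hypothesis $\mathbb{T} \subset \mathrm{span}\{M_e\}$ buys, via the observation $K^1,K^2 \in \mathbb{T}$. The rest is bookkeeping: checking that the parametrization $m \leftrightarrow K$ behaves well on the (possibly proper) support of $\lambda$, and noting that whatever convention is taken for ``stochastic matrix'' (rows or columns summing to one) is immaterial, provided the same convention is used to view $K^1, K^2$ as elements of $\mathbb{T}$.
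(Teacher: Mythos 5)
Your proof is correct, and it reaches the conclusion by a slightly different route than the paper. The paper works entirely in the stochastic-matrix parametrization $K \mapsto U_\lambda(K) = U(m_K)$, computes the Hessian
$\partial_{K_{i',j'}}\partial_{K_{i,j}}U_\lambda(K) = \sum_{e}c_e'(\ell_e(K))\lambda_i\lambda_{i'}1_{\{e\in x_j\}}1_{\{e\in x_{j'}\}}$, and observes that the associated quadratic form evaluated at $T$ equals $\sum_e c_e'(\ell_e(K))\langle T, M_e\rangle_F^2$, which is strictly positive for $T \neq 0$ in the span of the $M_e$; strict convexity on the compact convex set $\mathbb{T}$ then gives uniqueness. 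You instead avoid second derivatives altogether: your midpoint argument with the strictly convex primitives $\psi_e$ shows any two minimizers have identical loads, and then the same orthogonality-versus-membership dichotomy in $V = \mathrm{span}\{M_e\}$ finishes. The key linear-algebra step --- a matrix lying in $V$ and Frobenius-orthogonal to every $M_e$ must vanish, with membership in $V$ supplied by the hypothesis $\mathbb{T}\subset V$ --- is identical in both arguments. What your version buys is that differentiability of $c_e$ is never really used: strict monotonicity of $c_e$ already makes $\psi_e$ strictly convex, so your proof works under a weaker hypothesis than stated. What the paper's version buys is an explicit global strict convexity statement for $U_\lambda$ on $\mathbb{T}$. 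Your closing remarks on the degenerate rows of $\lambda$ and on the row/column convention for $\mathbb{T}$ are also apt (the spanning hypothesis in fact forces $\lambda$ to have full support, so the degenerate case never arises).
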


\subsubsection{Price of anarchy}
There is a rich literature on \emph{worst-case} bounds, which are
typically valid for a large class of cost functions and 
model specifications. For instance, for the class of linear cost
functions, the seminal paper of 
Roughgarden and Tardos \cite[Theorem 4.5]{roughgarden2002bad} provides a worst-case bound of $4/3$ for the PoA in nonatomic games. More precisely, if $c_e$ is linear for each $e$, then $\mathrm{PoA}(\lambda) \le 4/3$ for all $\lambda \in \P(\W)$.
On the other hand, for  finite games with linear cost functions, Christodoulou and Koutsoupias showed in \cite[Theorem 1]{christodoulou2005price} that the worst-case bound on the PoA is $5/2$. That is, if $c_e$ is linear for each $e$, then $\mathrm{PoA}_n(\vec{w}) \le 5/2$ for all $n$ and all $\vec{w} \in \W^n$.
These PoA bounds are sharp in the sense that there exist
linear cost functions and type distributions for which the bound holds
with equality. Nonetheless,  the following result asserts that for a
\emph{fixed} choice of  linear cost functions $\{c_e\}_{e \in E}$, the 
probability of the PoA in the $n$-player exceeding $4/3$ decays
super-exponentially in $n$.

\begin{corollary} \label{co:PoA-congestion}
In the congestion game model described above, let $\const =
\sup_{\lambda \ll \lambda_0}\mathrm{PoA}(\lambda)$, and assume that
for each $w \in \W$ and every $x \in \C(w)$ there exists $e \in x$
such that $c_e(t) > 0$ for all $t > 0$.  Suppose $\{W_i\}$ is an  i.i.d.\ sequence of  types with distribution
$\lambda_0$. Then, for every $\epsilon > 0$ and $c > 0$, there exists $N$ such that, for all $n \ge N$,
\[
\PP\left(\mathrm{PoA}_n(W_1,\ldots,W_n) \ge \const + \epsilon\right) \le e^{-cn}.
\]
\end{corollary}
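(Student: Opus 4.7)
The plan is to reduce the corollary directly to the large deviation upper bound in Proposition \ref{pr:PoA}, applied with threshold $r = \const + \epsilon$, and then to argue that the rate function value at this threshold is $+\infty$. Once this is shown, the statement that $\PP(\mathrm{PoA}_n(W_1,\ldots,W_n) \ge \const+\epsilon)$ decays faster than any exponential is an immediate consequence of the definition of $\limsup$.

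Before invoking Proposition \ref{pr:PoA}, I first need to check its standing hypothesis that $V > 0$ on the admissible measures that enter $\mathrm{PoA}$. Since $\W$ and $\X$ are finite in the congestion setting, any admissible $m$ (i.e., one with $m\{(w,x) : x \in \C(w)\}=1$) has at least one atom $(w_0,x_0)$ with $x_0 \in \C(w_0)$. The corollary's hypothesis then supplies an $e_0 \in x_0$ with $c_{e_0}(t) > 0$ for $t > 0$; since $\ell_{e_0}(m) \ge m\{(w_0,x_0)\} > 0$ and $F \ge 0$ everywhere, we get $V(m) \ge c_{e_0}(\ell_{e_0}(m))\, m\{(w_0,x_0)\} > 0$. (Equivalently, one can rewrite $V(m) = \sum_{e \in E} c_e(\ell_e(m))\ell_e(m)$ and note that at least one summand is strictly positive.)

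The core of the argument is the observation that the infimum
\[
\inf\{H(\lambda | \lambda_0) : \lambda \in \P(\W),\ \mathrm{PoA}(\lambda) \ge \const + \epsilon\}
\]
appearing in Proposition \ref{pr:PoA} is $+\infty$. Indeed, any $\lambda$ in this constraint set must fail to be absolutely continuous with respect to $\lambda_0$: if we had $\lambda \ll \lambda_0$, then by the very definition $\const := \sup_{\lambda \ll \lambda_0}\mathrm{PoA}(\lambda)$ we would have $\mathrm{PoA}(\lambda) \le \const < \const + \epsilon$, a contradiction. Hence every $\lambda$ in the constraint set satisfies $H(\lambda|\lambda_0) = \infty$ by the convention in \eqref{def:entropy}, and the infimum is $+\infty$. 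Feeding this into Proposition \ref{pr:PoA} yields
\[
\limsup_{n\to\infty}\frac{1}{n}\log\PP\bigl(\mathrm{PoA}_n(W_1,\ldots,W_n) \ge \const + \epsilon\bigr) \le -\infty,
\]
which is exactly the super-exponential decay asserted. I do not anticipate any substantive obstacle beyond unpacking definitions: once Proposition \ref{pr:PoA} and the definition of $\const$ are in hand, the claim follows tautologically from the fact that $\lambda \not\ll \lambda_0$ forces infinite relative entropy. The only mildly delicate point is the verification of the positivity of $V$, which is where the hypothesis on the cost functions $c_e$ is used.
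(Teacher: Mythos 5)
Your proposal is correct and follows essentially the same route as the paper's own proof: verify $V>0$ on admissible measures via the hypothesis on the $c_e$ (using exactly the same atom-plus-load computation), then feed $r = \const+\epsilon$ into the upper bound of Proposition \ref{pr:PoA} and observe that the constraint set forces $\lambda \not\ll \lambda_0$, so the rate is $+\infty$. The paper phrases this last step as an infimum over the empty set (by the convention $\inf\emptyset=\infty$), but that is the same observation.
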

\begin{remark}
The assumption in Corollary \ref{co:PoA-congestion} is not very
restrictive; it means that if an admissible route for a given agent
has a nonzero load on every edge, then the route has nonzero travel
time.  This holds, for instance, if $c_e(t) > 0$ for all $t > 0$ and
for all $e \in E$. 
\end{remark}
\begin{proof}[Proof of Corollary \ref{co:PoA-congestion}]
In this model, since $\W$ and $\X$ are finite, using \eqref{def:V} and
\eqref{def-costF}, 
 for any $m \in \P(\W\times\X)$ we can write 
\begin{align*}
V(m) &= \sum_{w \in \W}\sum_{x \in \C(w)}m\{(w,x)\}\sum_{e \in x}c_e(\ell_e(m)).
\end{align*}
Choose $w \in \W$ and $x \in \C(w)$ such that $m\{(w,x)\} > 0$. By assumption, we may find $e \in x$ such that $c_e(t) > 0$ for all $t > 0$. Then
\begin{align*}
\ell_e(m) = \sum_{w' \in \W}\sum_{x' \in \C(w')}1_{e \in x'}m\{(w',x')\} \ge 1_{e \in x}m\{(w,x)\} > 0,
\end{align*}
which implies
\begin{align*}
V(m) &\ge m\{(w,x)\}c_e(\ell_e(m)) > 0.
\end{align*}
We are now in a position to apply Proposition \ref{pr:PoA}.  Because
$\inf\emptyset = \infty$ by convention, 
\begin{align*}
\limsup_{n\rightarrow\infty}\frac{1}{n}\log\,&\PP\left(\mathrm{PoA}_n(W_1,\ldots,W_n) \ge \const + \epsilon\right) \\
	&\le -\inf\left\{H(\lambda | \lambda_0) : \lambda \ll \lambda_0, \ \mathrm{PoA}(\lambda) \ge \const + \epsilon\right\} \\
	&= -\infty.
\end{align*}
\end{proof}

As discussed above, Roughgarden and Tardos showed that the constant
$\const$ of Corollary \ref{co:PoA-congestion} is at most $4/3$ when
$c_e$ is linear for each $e$. Even though the finite $n$-player game
worst-case $\mathrm{PoA}_n$ bound of $5/2$ is optimal \emph{among the class of
  linear cost functions}, our results show that for large $n$, it is
highly unlikely for any \emph{fixed collection of linear cost
  functions $\{c_e\}_{e \in E}$} to produce a
PoA over $4/3$ when sampling i.i.d.\ random types. More generally,
Corollary \ref{co:PoA-congestion}  produces a high-probability PoA bound
for a large but finite population game from 
 a PoA bound for the corresponding class of nonatomic
congestion games.

\section{Extensions and proofs of main results} \label{se:proofs}

We begin our analysis in Section \ref{subs-coupling} by embedding the $n$-player games and the
associated nonatomic  game on a common space, inspired by a
construction of Housman \cite{housman1988infinite}. Then, in Sections 
\ref{subs-pf-lln1} and \ref{subs-pf-ldp} we prove, respectively, the law
of large numbers and large deviation results. Finally, we prove the conditional
limit theorem in Section \ref{se:conditional-limit} and our results on the
price of anarchy in Section \ref{se:PoA-proofs}.

\subsection{A common embedding of $n$-player and nonatomic games}
\label{subs-coupling}

Let $\mathrm{Gr}(\C)$ denote the graph of the constraint set-valued
map $\C$:
\[
\mathrm{Gr}(\C) = \{(w,x) \in \W\times\X : x \in \C(w)\}.
\]
We wish to define an \emph{equilibrium map} $\NN =
\NN(\lambda,\epsilon,u)$, which maps certain 
  elements of $\P(\W) \times [0,\infty)
\times [0,1]$ to subsets of $\P(\W\times\X)$. 
The first input parameter, $\lambda \in \P(\W)$, denotes the distribution of types,
 while the parameter $\epsilon \in [0,\infty)$ 
signifies that we are interested in $\epsilon$-Nash
equilibria (defined precisely in Remark \ref{rem-map}(4) below). 
 Finally, the parameter $u \in [0,1]$ is interpreted as the \emph{size} (or degree of influence) of
an agent. We are only interested in sizes belonging to
$\overline{\N}^{-1} := \{1/n : n =1,2,\ldots\} \cup \{0\}$. When the
size is $1/n$ we are only interested in discrete probability
  distributions of the form $m =
\frac{1}{n}\sum_{i=1}^n\delta_{(w_i,x_i)}$, where $(w_i,x_i) \in
\mathrm{Gr}(\C)$, $i=1, \ldots, n$.  Thus, the domain of the map $\NN$ is 
a certain subset of $\P(\W) \times [0,\infty)
\times [0,1]$, whose definition requires the following notation. 
For any set $\S$, and positive integer $n$, let $\mathcal{E}_{1/n}(\S)
:= \{\frac{1}{n}\sum_{i=1}^n\delta_{e_i} : e_i \in \S\}$ denote the set of
empirical distributions of $n$ points in $\S$. When $\S$ is a
metric space, the convention
$\mathcal{E}_0(\S) := \P(\S)$ will be useful as well, where  as
usual, $\P(\S)$ is the set of Borel probability measures on $\S$. 
Define $\D(\NN)$  to be the set of $(\lambda,\epsilon,u) \in
\P(\W)\times [0,\infty) \times [0,1]$ such that $u \in
\overline{\mathbb{N}}^{-1}$ and $\lambda \in \mathcal{E}_u(\W)$. That
is, 
\begin{align}
\D(\NN) &:= \bigcup_{u \in \overline{\N}^{-1}}\left(\mathcal{E}_{u}(\W) \times [0,\infty) \times \{u\}\right) \label{def-dnn} 
\end{align}
Next, define 
a real-valued function $G$ by 
\begin{align}
G(m,u,w,x) := F(m,w,x)  - \inf_{y \in \C(w)}F\left(m +
  u(\delta_{(w,y)}-\delta_{(w,x)}),w,y\right), \label{def:Gfunction}
\end{align}
for $((m,u),w,x)$ in $\DG \times\W\times\X$, where 
\begin{align}
\DG := \bigcup_{u \in \overline{\N}^{-1}}\left(\mathcal{E}_u(\W\times\X) \times
  \{u\}\right). 
\label{def-dg}
\end{align}
Finally, define the equilibrium map $\NN$ on $\D(\NN)$ by 
\begin{align}
\NN(\lambda,\epsilon,u) &= \left\{m \in \mathcal{E}_u(\W\times\X) :
  m(\text{Gr}(\C)) = 1, \ m^w = \lambda, \  G(m,u,w,x) \le \epsilon
  \text{ for } m\text{-a.e.\ } (w,x)\right\}. \label{def:N_n}
\end{align}

\begin{remark}
\label{rem-map} 
Several comments are in order here. 
\begin{enumerate}
\item 
$\NN(\lambda,0,0)$ is precisely the set $\M(\lambda)$ of Cournot-Nash
equilibria; here the ``error'' parameter $\epsilon$ and the
``size'' parameter $u$ are both  zero, which means that
$\mathcal{E}_0(\W\times\X)=\P(\W\times\X)$ contains all probability
distributions on $\W \times \X$. 
\item  When $u =1/n > 0$ for some positive integer $n$,  there are $n$
  agents, each of ``size'' $1/n$, and $\NN(\lambda,\epsilon,u)$ is a subset of
$\mathcal{E}_u(\W\times\X)$, the empirical distributions of $n$ points in
$\W \times \X$.  
\item 
The term $u(\delta_{(w,y)} - \delta_{(w,x)})$ appearing in $F$ in the
definition \eqref{def:Gfunction} of $G$ accounts for the effect on the
distribution $m$ of agents when  an agent of size $u$ changes its
strategy. 
\item 
 If $(w_1,\ldots,w_n) \in \W^n$ is a type vector for the $n$-player
 game, it is straightforward to see that
 $\NN\left(\frac{1}{n}\sum_{i=1}^n\delta_{w_i},\epsilon,1/n\right)$ is
 precisely the set of empirical distributions
 $\frac{1}{n}\sum_{i=1}^n\delta_{(w_i,x_i)}$, where $(x_1,\ldots,x_n)$
 is an $\epsilon$-Nash equilibrium  with type vector
 $(w_1,\ldots,w_n)$, in the sense that $G(\frac{1}{n}\sum_{i=1}^n\delta_{(w_i,x_i)},1/n,w_i,x_i) \leq \epsilon$ for every $i$.
Most importantly, recalling the definition of $\widehat{\NN}_n(w_1,\ldots,w_n)$ from
\eqref{def:intro-Nn}, we have
\begin{align*}
\NN\left(\frac{1}{n}\sum_{i=1}^n\delta_{w_i},0,\frac{1}{n}\right) = \widehat{\NN}_n(w_1,\ldots,w_n).
\end{align*}
\end{enumerate}
\end{remark}

The key result of this section, inspired by
\cite{housman1988infinite}, 
is that the map $\NN$ is \emph{upper hemicontinuous}, a crucial property that is used in
the proofs of most of the main results.  Let us first
recall some basic definitions regarding set-valued functions.  Let $X$
and $Y$ be topological spaces, and let $\Gamma : X \rightarrow 2^Y$
map points in  $X$ to subsets of $Y$. We say that the set-valued map
$\Gamma$ is \emph{upper hemicontinuous} if $\{x \in X : \Gamma(x) \subset A\}$ is open in $X$ for every open set $A \subset Y$, and we say that $\Gamma$ is \emph{lower hemicontinuous} if $\{x \in X : \Gamma(x) \cap A \neq \emptyset\}$ is open in $X$ for every open set $A \subset Y$. Say that $\Gamma$ is \emph{continuous} if it is both upper and lower hemicontinuous.
 If $Y$ is compact Hausdorff, and if $\Gamma(x)$ is closed for each
 $x$, then $\Gamma$ is upper hemicontinuous if and only if its graph
 $\mathrm{Gr}(\Gamma) = \{(x,y) \in X \times Y : y \in \Gamma(x)\}$ is
 closed \cite[Theorem 17.11]{aliprantisborder}. On the other hand, if
 $X$ and $Y$ are metric spaces, there is a useful sequential
 characterization (c.f.\ Theorems 17.16 and 17.19 of
 \cite{aliprantisborder}): first, $\Gamma$ is lower hemicontinuous if
 and only if, whenever $x_n \rightarrow x$ in $X$ and $y \in
 \Gamma(x)$, there exist integers $1 \le n_1 < n_2 < \ldots$ and
 $y_{n_k} \in \Gamma(x_{n_k})$ such that $y_{n_k} \rightarrow
 y$. Second, a map $\Gamma$ with compact values is upper hemicontinuous if and only if, whenever $x_n \rightarrow x$ in $X$ and $y_n \in \Gamma(x_n)$ for all $n$, the sequence $\{y_n\}$ is precompact, and every limit point belongs to $\Gamma(x)$.

\begin{proposition} \label{pr:Nash-UHC}
The sets $\D(\NN)$ and $\DG$ in \eqref{def-dnn} and \eqref{def-dg} are
closed. The set-valued map $\NN$ in \eqref{def:N_n} is upper
hemicontinuous on $\D(\NN)$ with compact values, and the function $G$
in \eqref{def:Gfunction} 
is continuous.  In particular, $\NN(\lambda,0,0) = \M(\lambda)$ is
closed for all $\lambda \in \P(\W)$. 
\end{proposition}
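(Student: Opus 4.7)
The plan is to proceed in three stages: (i) show the domains $\D(\NN)$ and $\DG$ are closed; (ii) establish continuity of $G$ via Berge's maximum theorem; and (iii) prove upper hemicontinuity of $\NN$ with compact values. The final assertion that $\M(\lambda) = \NN(\lambda, 0, 0)$ is closed then follows, since compact subsets of the metrizable space $\P(\W\times\X)$ are closed.

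For (i), I would first observe that $\overline{\N}^{-1} \subset [0,1]$ is closed, with $0$ its only accumulation point, so any convergent sequence $u_k \in \overline{\N}^{-1}$ either has limit $0$ or is eventually constant. Thus for a sequence $(\lambda_k, \epsilon_k, u_k) \to (\lambda, \epsilon, u)$ in $\D(\NN)$, only the case $u = 1/n > 0$ is nontrivial (the case $u=0$ is automatic from the convention $\mathcal{E}_0(\W)=\P(\W)$). Writing $\lambda_k = \frac{1}{n}\sum_{i=1}^n \delta_{w_i^k}$ for $k$ large, tightness of the weakly convergent $\{\lambda_k\}$ yields a compact $K \subset \W$ with $\lambda_k(K) > 1 - 1/(n+1)$ for all $k$, which forces every atom $w_i^k$ into $K$; a diagonal subsequence then gives $w_i^{k_j} \to w_i$, so $\lambda = \frac{1}{n}\sum_i \delta_{w_i} \in \mathcal{E}_{1/n}(\W)$. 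The argument for $\DG$ is identical, using compactness of $\X$ for the second coordinate.

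For (ii), the map $(m, u, w, x, y) \mapsto m + u(\delta_{(w, y)} - \delta_{(w, x)})$ is continuous into $\P(\W\times\X)$ with the weak topology (since $(w,y)\mapsto\delta_{(w,y)}$ is continuous). Composing with the continuous $F$, the expression $F(m + u(\delta_{(w, y)} - \delta_{(w, x)}), w, y)$ is jointly continuous in all arguments. The correspondence $w \mapsto \C(w)$ is continuous with nonempty compact values (by the standing assumption together with compactness of $\X$), so Berge's maximum theorem applied with parameter $(m, u, w, x)$ and optimization variable $y$ gives continuity of the infimum, hence of $G$.

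For (iii), I take $(\lambda_k, \epsilon_k, u_k) \to (\lambda, \epsilon, u)$ in $\D(\NN)$ and any $m_k \in \NN(\lambda_k, \epsilon_k, u_k)$. Tightness of $\{m_k^w\} = \{\lambda_k\}$ together with compactness of $\X$ yields tightness of $\{m_k\}$ by Prokhorov, so along a subsequence $m_k \to m$. Then $m^w = \lambda$ by continuity of marginalization; $m(\mathrm{Gr}(\C)) \ge \limsup_k m_k(\mathrm{Gr}(\C)) = 1$ by Portmanteau for the closed set $\mathrm{Gr}(\C)$; and $m \in \mathcal{E}_u(\W\times\X)$ by the argument of step (i). The hard part is to verify $G(m, u, \cdot, \cdot) \le \epsilon$ $m$-a.e. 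I would reformulate the defining constraint as
\[
\int \left(G(m_k, u_k, w, x) - \epsilon_k\right)_+ m_k(dw, dx) = 0,
\]
and set $\psi_k := (G(m_k, u_k, \cdot, \cdot) - \epsilon_k)_+$, $\psi := (G(m, u, \cdot, \cdot) - \epsilon)_+$. By the continuity of $G$ from (ii) applied on the compact parameter set $\{(m_k, u_k)\}_k \cup \{(m, u)\} \subset \DG$ together with a compact slice $K \times \X \subset \W \times \X$, the sequence $\psi_k$ converges to $\psi$ uniformly on $K \times \X$, and is uniformly bounded since $F$ is. A standard argument combining uniform convergence on compacta with tightness of $\{m_k\}$ then yields $\int \psi \, dm = \lim_k \int \psi_k \, dm_k = 0$, so $\psi = 0$ $m$-a.e. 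Compactness of $\NN(\lambda, \epsilon, u)$ follows from the same argument applied with constant parameters. The main obstacle is this final step, which requires commuting the weak convergence $m_k \to m$ with the $m$-a.e.\ constraint on a test function $G(m, u, \cdot, \cdot)$ that itself depends on the limiting measure.
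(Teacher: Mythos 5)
Your proposal is correct, and its overall skeleton (closedness of the domains, Berge's maximum theorem for $G$, sequential upper hemicontinuity via tightness and Prokhorov) matches the paper's. The one place where you take a genuinely different route is the step you correctly identify as the hard one: passing the $m_k$-a.e.\ constraint $G(m_k,u_k,\cdot,\cdot)\le\epsilon_k$ to the limit. You recast the constraint as the integral identity $\int (G(m_k,u_k,w,x)-\epsilon_k)_+\,m_k(dw,dx)=0$ and push it to the limit via uniform convergence of $(G(m_k,u_k,\cdot,\cdot)-\epsilon_k)_+$ on a compact slice $K\times\X$, combined with tightness and the uniform bound $|G|\le 2\|F\|_\infty$; this works, and as a bonus you supply an explicit proof that $\mathcal{E}_{1/n}(\W)$ is closed, which the paper only asserts. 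The paper instead forms the product measures $\eta_k:=\delta_{(m_k,u_k)}\otimes m_k$ on $\DG\times\W\times\X$, notes $\eta_k\to\eta_\infty$ weakly, and applies the Portmanteau theorem directly to the closed set $\{(m,u,w,x):G(m,u,w,x)\le\epsilon_\infty+\Delta\}$, letting $\Delta\downarrow 0$ at the end. The paper's device avoids any uniform-convergence or truncation bookkeeping (only closedness of sublevel sets of the continuous $G$ is needed), while your argument is more elementary and self-contained, at the cost of the compact-slice decomposition; both are valid.
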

\begin{proof}
Let $(\lambda_n,\epsilon_n,u_n) \in \D(\NN)$ and
$(\lambda_\infty,\epsilon_\infty,u_\infty) \in \P(\W) \times
[0,\infty) \times [0,1]$ with $(\lambda_n,\epsilon_n,u_n) \rightarrow
(\lambda_\infty,\epsilon_\infty,u_\infty)$.   If $u_\infty=0$, then
trivially $\lambda_\infty \in \mathcal{E}_0(\W) = \P(\W)$, and so
$(\lambda_\infty,\epsilon_\infty,u_\infty) \in \D(\NN)$. If $u_\infty
\neq 0$, then there exists $N$ such that $u_\infty = u_n = u$ for all
$n \ge N$. But then $\lambda_n$ belongs to the closed set
$\mathcal{E}_{u_\infty}(\W)$ for all $n \ge N$, and thus so does
$\lambda_\infty$.  Moreover, by definition $\epsilon_\infty \in
[0,\infty)$.  This shows that $\D(\NN)$ is closed, and the same
argument shows that $\DG$ is closed. 

To show that $\NN$ is upper hemicontinuous,  we use the sequential
characterization described above.  Let $(\lambda_n,\epsilon_n,u_n) \in
\D(\NN)$ with $(\lambda_n,\epsilon_n,u_n) \rightarrow
(\lambda_\infty,\epsilon_\infty,u_\infty)$, and let $m_n \in
\NN(\lambda_n,\epsilon_n,u_n)$ for every $n$. 
First, note that $m_n^w = \lambda_n$ for each $n$, which implies that
$\{m_n^w\} \subset \P(\W)$ is tight by Prokhorov's theorem and our
standing assumption (2)  that $\W$ is a complete separable metric
space.  Because $\X$ is compact,
$\{m_n^x\} \subset \P(\X)$ is also tight. Thus $\{m_n\} \subset \P(\W\times\X)$ is tight, and by Prokhorov's theorem it admits a subsequential limit point $m_\infty$. It remains to show that $m_\infty \in \NN(\lambda_\infty,\epsilon_\infty,u_\infty)$.

We will abuse notation somewhat by assuming $m_n \rightarrow m_\infty$.
Because $m_n(\mathrm{Gr}(\C))=1$ for each $n$ and $\mathrm{Gr}(\C)$ is closed, the Portmanteau theorem yields $m_\infty(\mathrm{Gr}(\C))=1$.
It is clear also that
\[
\lambda_\infty = \lim_{n\rightarrow\infty}\lambda_n =
\lim_{n\rightarrow\infty}m^w_n = m^w_\infty, 
\]
where the limits are in the sense of weak convergence. 
The continuity of $F$ and $\C$ of standing assumptions (3-4) 
implies  the continuity  of $G$ by Berge's theorem
\cite[Theorem 17.31]{aliprantisborder}.  Define measures $\eta_n$ on $\DG \times \W\times\X$ by
\begin{align*}
\eta_n(dm,du,dw,dx) = \delta_{(m_n,u_n)}(dm,du)m_n(dw,dx).
\end{align*}
Then $\eta_n \rightarrow \eta_\infty$  because $(m_n,u_n) \rightarrow (m_\infty,u_\infty)$, and it follows from the
Portmanteau theorem that for any $\Delta  > 0$, 
\begin{align*}
m_\infty\left\{(w,x) : G(m_\infty,u_\infty,w,x) \le \epsilon_\infty+
  \Delta\right\} &= \eta_\infty\left\{(m,u,w,x) : G(m,u,w,x) \le
  \epsilon_\infty + \Delta \right\} \\
	&\ge \limsup_{n\rightarrow\infty}\eta_n\left\{(m,u,w,x) :
          G(m,u,w,x) \le \epsilon_\infty + \Delta \right\} \\
	&= \limsup_{n\rightarrow\infty}m_n\left\{(w,x) :
          G(m_n,u_n,w,x) \le \epsilon_\infty + \Delta \right\} \\
&\geq \limsup_{n\rightarrow\infty} m_n\left\{(w,x) :
          G(m_n,u_n,w,x) \le \epsilon_n \right\} \\
	&= 1,
\end{align*}
where the last inequality uses the fact that, since $\epsilon_n
\rightarrow \epsilon_\infty$, $\epsilon_n \leq \epsilon_\infty +
\Delta$ for all sufficiently large $n$, and the last equality  holds
because  $m_n \in \NN(\lambda_n,\epsilon_n,u_n)$.   Since 
$\Delta > 0$ is arbitrary,  it follows that  $G(m_\infty,u_\infty,w,x) \le
\epsilon_\infty$ for $m_\infty$ a.e.\ $(w,x)$. 
It remains to check that $m_\infty$ belongs to
$\mathcal{E}_{u_\infty}(\W\times\X)$. If $u_\infty=0$, there is
nothing to prove because of the convention $\mathcal{E}_0(\W\times\X)
= \P(\W\times\X)$. If $u_\infty > 0$, then there exists $N$ such that
$u_n=u_\infty$ for all $n \ge N$. Then  $m_n$ belongs to the closed
set $\mathcal{E}_{u_\infty}(\W\times\X)$ for all $n \ge N$, and hence,
so does $m_\infty$.
\end{proof}

\subsection{Existence of Cournot-Nash equilibria} \label{se:existence}

Under our standing assumptions, there always exist
Cournot-Nash equilibria for the nonatomic game.  The proof uses a
simple argument due to Mas-Colell \cite{mas1984theorem}, 
appropriately modified  to incorporate the constraint map $\C$.

\begin{proposition} \label{pr:existence}
For each $\lambda \in \P(\W)$, $\M(\lambda) \neq \emptyset$.
\end{proposition}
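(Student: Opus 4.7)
The plan is to apply the Kakutani--Fan--Glicksberg fixed point theorem to a best-response correspondence on the set of admissible type-action distributions, following Mas-Colell's strategy adapted to accommodate the constraint map $\C$. Fix $\lambda \in \P(\W)$ and consider $\A(\lambda)$ from \eqref{def:A_lambda}. I would first verify that $\A(\lambda)$ is a nonempty, convex, weakly compact subset of $\P(\W\times\X)$. Convexity is immediate, since both defining conditions $m^w=\lambda$ and $m(\mathrm{Gr}(\C))=1$ are preserved by convex combinations. For nonemptyness, a measurable selection theorem applied to the closed-graph, nonempty-valued correspondence $\C$ produces a Borel map $f:\W\to\X$ with $f(w)\in\C(w)$ for every $w$, and then $m(dw,dx):=\lambda(dw)\delta_{f(w)}(dx)$ lies in $\A(\lambda)$. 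Weak compactness follows from Prokhorov's theorem: $\{\lambda\}$ is tight because $\W$ is complete separable, and $\X$ is compact, so the set of measures on $\W\times\X$ with $\W$-marginal $\lambda$ is tight; the two defining conditions pass to weak limits by the Portmanteau theorem, using that $\mathrm{Gr}(\C)$ is closed.

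Next I would define the best-response correspondence $\Phi:\A(\lambda)\to 2^{\A(\lambda)}$ by
$$\Phi(m):=\left\{m'\in\A(\lambda): G(m,0,w,x)=0\ \text{for } m'\text{-a.e.\ }(w,x)\right\},$$
where $G$ is defined in \eqref{def:Gfunction}. Since $G(m,0,w,x)\ge 0$ whenever $x\in\C(w)$ and every $m'\in\A(\lambda)$ satisfies $m'(\mathrm{Gr}(\C))=1$, any fixed point $m^*\in\Phi(m^*)$ is precisely a Cournot-Nash equilibrium, i.e., $m^*\in\M(\lambda)$. Convexity of $\Phi(m)$ is immediate. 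Nonemptyness relies again on measurable selection, this time applied to the best-response correspondence $w\mapsto \{x\in\C(w):F(m,w,x)=\inf_{y\in\C(w)}F(m,w,y)\}$, which is nonempty (since $\C(w)$ is compact and $F(m,w,\cdot)$ continuous) and has a measurable, closed graph by Berge's maximum theorem.

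For the closed-graph property, suppose $m_n\to m$ and $m'_n\to m'$ weakly with $m'_n\in\Phi(m_n)$. Continuity of $G$, established as part of Proposition \ref{pr:Nash-UHC}, together with the Portmanteau theorem applied to the product measures $\delta_{(m_n,0)}(d\tilde m,du)\,m'_n(dw,dx)$ on $\DG\times\W\times\X$, yields $m'\{(w,x):G(m,0,w,x)\le\varepsilon\}=1$ for every $\varepsilon>0$; since $G\ge 0$ on $\mathrm{Gr}(\C)$ and $m'(\mathrm{Gr}(\C))=1$, this forces $G(m,0,\cdot,\cdot)=0$ $m'$-a.e., so $m'\in\Phi(m)$. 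The main obstacle is really only the two measurable-selection steps, both of which are standard given the continuity of $\C$ and $F$; the closed-graph verification essentially recycles the argument already carried out in the proof of Proposition \ref{pr:Nash-UHC}, and Kakutani--Fan--Glicksberg then delivers a fixed point of $\Phi$, which by construction lies in $\M(\lambda)$.
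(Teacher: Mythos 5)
Your proof is correct and follows essentially the same route as the paper: Mas-Colell's argument, adapted to the constraint map $\C$, applying the Kakutani--Fan--Glicksberg theorem to a best-response correspondence on the compact convex set $\A(\lambda)$, with nonemptiness of the values supplied by measurable selection and the closed graph supplied by continuity of $G$. The only cosmetic difference is that you define $\Phi(m)$ by the almost-everywhere condition $G(m,0,\cdot,\cdot)=0$ rather than the equivalent integral inequality $\int G(m,0,w,x)\,\widetilde m(dw,dx)\le 0$ used in the paper, which makes your closed-graph check go through a Portmanteau argument instead of continuity of the integral functional; both are valid.
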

\begin{proof}
Let $\mathrm{Gr}(\C) = \{(w,x) \in \W\times\X : x \in \C(w)\}$ as before, and define $\A(\lambda)$ as in \eqref{def:A_lambda}.
Note that $\A(\lambda)$ is closed, as $\mathrm{Gr}(\C)$ is closed by
our 
standing assumption (3). 
Because $\X$ is compact and $\W \times \X$ is a complete
  separable metric space by  our standing assumptions (1-2), it is
  straightforward to check that $\A(\lambda)$ is tight and thus
  compact. Consider the map $\Phi$ from $\A(\lambda)$ into subsets of $\A(\lambda)$, given by
\[
\Phi(m) = \left\{\widetilde{m} \in \A(\lambda) :
	\int_{\W \times \X}G(m,0,w,x)\widetilde{m}(dw,dx) \le 0 
	\right\}
\]
for $m \in \A(\lambda)$. 
Note that $m \in \P(\W\times\X)$ is a Cournot-Nash equilibrium with type distribution $\lambda$ if and only if $m \in \Phi(m)$, i.e., $m$ is a fixed point of $\Phi$.
Clearly  $\A(\lambda)$ is convex, and hence $\Phi(m)$ is  convex for each $m$.
The graph $\mathrm{Gr}(\Phi) = \{(m,\widetilde{m}) \in
\A(\lambda)\times\A(\lambda) : \widetilde{m} \in \Phi(m)\}$ is easily
seen to be closed, using the fact that $G$ is continuous (due to
Proposition \ref{pr:Nash-UHC}) and bounded (by standing assumption
(4)). To check that $\Phi(m)$ is nonempty for each $m$, note that there exists (e.g., by \cite[Theorem 18.19]{aliprantisborder}) a measurable function $\hat{x} : \A(\lambda) \times \W \rightarrow \X$ such that $\hat{x}(m,w) \in \C(w)$ and $F(m,w,\hat{x}(m,w))=\inf_{y \in \C(w)}F(m,w,y)$ for each $(m,w) \in \A(\lambda) \times \W$. Then $\hat{m}(dw,dx) = \lambda(dw)\delta_{\hat{x}(m,w)}(dx)$ always belongs to $\Phi(m)$. Because $\Phi$ has a closed graph and nonempty convex values, it admits a fixed point by Kakutani's theorem \cite[Corollary 17.55]{aliprantisborder}.
\end{proof}

\subsection{Proof of laws of large numbers}
\label{subs-pf-lln1}

Using Proposition \ref{pr:Nash-UHC}, we give streamlined proofs of
Theorems \ref{th:intro-limit-setvalued} and \ref{th:intro-limit}, and
even an extension of the latter. 

\begin{proof}[Proof of Theorem \ref{th:intro-limit-setvalued}]
Proposition \ref{pr:Nash-UHC} shows that $\NN$ is upper hemicontinuous. According to Lemma \ref{le:uhcVietoris}, this implies that $\NN$ is continuous as a map from $\D(\NN)$ to the space $\mathfrak{C}$ of closed subsets of $\P(\W\times\X)$ endowed with the upper Vietoris topology. Because $(\frac{1}{n}\delta_{W_i},0,\frac{1}{n})$ converges almost surely to $(\lambda_0,0,0)$, it follows (see Remark \ref{rem-map}(4) for the first equality) that, almost surely,
\begin{align*}
\widehat{\NN}_n(W_1,\dots,W_n) &= \NN\left(\frac{1}{n}\delta_{W_i},0,\frac{1}{n}\right) \rightarrow \NN(\lambda_0,0,0) = \M(\lambda_0).
\end{align*}
\end{proof}

We next turn to the proof of Theorem \ref{th:intro-limit}, which
we  precede with a reassuring technical lemma.
First, for $\epsilon \ge 0$ and $n \in
\mathbb{N}$, let  $N^\epsilon_n (\vec{w})$ denote the set of $\epsilon$-Nash
equilibria with type vector $\vec{w} \in \W^n$.  By Remark
\ref{rem-map}(4), this can be expressed in  terms of the
function $G$ of \eqref{def:Gfunction} as  
\[  N^\epsilon_n (w_1, \ldots, w_n) = \left\{ \vec{x} \in \X^n: x_i \in
\C(w_i) \mbox{ and }  G\left(\frac{1}{n} \sum_{i=1}^n
\delta_{(w_i, x_i)}, \frac{1}{n}, w_i, x_i\right)\leq \epsilon, \, \forall
i= 1, \ldots, n\right\}. 
\] 
Also, let $D^\epsilon_n$ be the set of $\vec{w} \in \W^n$ for which there exists an
associated $\epsilon$-Nash equilibrium: 
\[ D^\epsilon_n = \left\{ \vec{w} \in \W^n: N^\epsilon_n (\vec{w}) 
  \neq \emptyset\right\}. 
\]

\begin{lemma} \label{le:measurable-selection}
For each $n$ and $\epsilon \ge 0$, the set  $D^\epsilon_n$ is closed.
Moreover, there exists a universally measurable map $\hat{x} :
D^\epsilon_n \rightarrow \X^n$ such that $\hat{x}(\vec{w}) \in
N^\epsilon_n(\vec{w})$ for each $\vec{w} \in D^\epsilon_n$. 
\end{lemma}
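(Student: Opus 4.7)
The plan is to establish that the correspondence $N^\epsilon_n : \W^n \to 2^{\X^n}$ has a closed graph in $\W^n \times \X^n$, and then deduce both the closedness of $D^\epsilon_n$ and the existence of a measurable selection from standard set-valued analysis, with the compactness of $\X$ from standing assumption (1) doing most of the work.

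For the closed graph, suppose $\vec{w}^k \to \vec{w}$ in $\W^n$ and $\vec{x}^k \in N^\epsilon_n(\vec{w}^k)$ with $\vec{x}^k \to \vec{x}$ in $\X^n$; this convergence can always be arranged along a subsequence since $\X^n$ is compact. Writing $m_k := \frac{1}{n}\sum_{j=1}^n \delta_{(w_j^k, x_j^k)}$ and $m := \frac{1}{n}\sum_{j=1}^n \delta_{(w_j, x_j)}$, we have $m_k \to m$ weakly. The closedness of $\mathrm{Gr}(\C)$ from standing assumption (3) yields $x_i \in \C(w_i)$ for each $i$, while the continuity of $G$ established in Proposition \ref{pr:Nash-UHC} allows us to pass the inequality $G(m_k, 1/n, w_i^k, x_i^k) \le \epsilon$ to the limit, producing $\vec{x} \in N^\epsilon_n(\vec{w})$. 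The closedness of $D^\epsilon_n$ now follows because $D^\epsilon_n$ is the image of the closed set $\mathrm{Gr}(N^\epsilon_n) \subset \W^n \times \X^n$ under projection onto $\W^n$, and the compactness of the fiber factor $\X^n$ makes this projection a closed map.

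For the universally measurable selection, $\mathrm{Gr}(N^\epsilon_n)$ is a closed (hence Borel, hence analytic) subset of the Polish space $D^\epsilon_n \times \X^n$, and $N^\epsilon_n(\vec{w})$ is a nonempty compact set for every $\vec{w} \in D^\epsilon_n$. The Jankov--von Neumann uniformization theorem (see, e.g., Bertsekas and Shreve, Proposition 7.50) therefore produces the required universally measurable section $\hat{x} : D^\epsilon_n \to \X^n$. Alternatively, since every open subset of the compact metric space $\X^n$ is a countable union of compact sets, the preimage $\{\vec{w} : N^\epsilon_n(\vec{w}) \cap U \neq \emptyset\}$ of any open $U \subset \X^n$ is $F_\sigma$, so $N^\epsilon_n$ is weakly Borel measurable with closed values and even admits a Borel measurable selection by the Kuratowski--Ryll-Nardzewski theorem.

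There is no substantial obstacle here. The only point that requires care is not to conflate $N^\epsilon_n$, whose values are action vectors in $\X^n$, with the closely related map $\widehat{\NN}_n$ whose values are empirical type-action distributions in $\P(\W \times \X)$; the upper hemicontinuity of the latter is already recorded in Proposition \ref{pr:Nash-UHC}, and the argument above for $N^\epsilon_n$ is essentially parallel but must be carried out at the level of action vectors rather than empirical measures.
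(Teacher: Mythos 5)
Your proposal is correct and follows essentially the same route as the paper: closedness of $\mathrm{Gr}(N^\epsilon_n)$ via continuity of $G$ and closedness of $\mathrm{Gr}(\C)$, closedness of $D^\epsilon_n$ from the projection being a closed map by compactness of $\X^n$, and the selection from the Jankov--von Neumann theorem. The extra detail on passing to the limit of empirical measures and the alternative Kuratowski--Ryll-Nardzewski argument are fine but not needed.
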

\begin{proof}
Continuity of $G$ (proven in Proposition \ref{pr:Nash-UHC}) and closedness of the graph of $\C$ (one of our standing assumptions) together imply that the graph
\[
\mathrm{Gr}(N^\epsilon_n) = \left\{(\vec{w},\vec{x}) \in \W^n \times \X^n : \vec{x} \in N^\epsilon_n(\vec{w})\right\}
\]
 is closed. The projection from $\W \times \X$ to $\W$ is a closed map, since $\X$ is compact, which shows that $D^\epsilon_n$ is closed. The existence of $\hat{x}$ follows from the Jankov-von Neumann theorem \cite[Proposition 7.49]{bertsekasshreve}.
\end{proof}

\begin{theorem} \label{th:limit}
Let $\epsilon_n \ge 0$ be such that  $\epsilon_n \rightarrow 0$.
Suppose, for each $n$ and each $\vec{w} \in \W^n$, we are given an $\epsilon_n$-Nash equilibrium $\hat{x}^n(\vec{w})=(\hat{x}^n_1(\vec{w}),\ldots,\hat{x}^n_n(\vec{w}))$ with type vector $\vec{w}$. By Lemma \ref{le:measurable-selection} we may assume each $\hat{x}^n_i$ is universally measurable. Finally, suppose $\vec{W}^n=(W^n_1,\ldots,W^n_n)$ is a $\W^n$-valued random vector, and define the random empirical distributions
\[
\mu_n = \frac{1}{n}\sum_{i=1}^n\delta_{(W^n_i,\hat{x}^n_i(\vec{W}^n))}, \quad\quad\quad \mu^w_n := \frac{1}{n}\sum_{i=1}^n\delta_{W^n_i} .
\]
Then the  following hold:
\begin{enumerate}[(i)]
\item If the sequence $\{\mu_n^w\}$ is tight, then so is the sequence
  $\{\mu_n\}$, and every subsequential limit in distribution $\mu$ of $\{\mu_n\}$ satisfies $\mu \in \M(\mu^w)$, almost surely.
\item If $\mu^w_n \rightarrow \lambda_0$ in distribution, where
  $\lambda_0 \in \P(\W)$ is deterministic, then every subsequential
 limit in  distribution of $\{\mu_n\}$ is supported on
  $\M({\lambda_0})$. 
In particular, 
\begin{align}
\lim_{n\rightarrow\infty}\PP(d(\mu_n,\M ({\lambda_0})) \ge \epsilon) = 0, \label{def:convergence-in-prob}
\end{align}
for every $\epsilon > 0$, and any metric $d$ on $\P(\W\times\X)$
compatible with weak convergence.
\item  
  If $W^n_1,\ldots,W^n_n$ are i.i.d.\ with distribution $\lambda_0 \in
  \P(\W)$, for each $n$, then $d(\mu_n,\M({\lambda_0})) \rightarrow 0$
  almost surely, for $d$ as in (ii). 
\end{enumerate}
\end{theorem}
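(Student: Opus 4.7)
The proof hinges on the upper hemicontinuity of the equilibrium map $\NN$ established in Proposition \ref{pr:Nash-UHC}, together with the identification $\mu_n \in \NN(\mu_n^w, \epsilon_n, 1/n)$ from Remark \ref{rem-map}(4). In each part, the idea is to argue that the parameter triple $(\mu_n^w, \epsilon_n, 1/n)$ converges to a point of the form $(\mu^w, 0, 0) \in \D(\NN)$ and to invoke the sequential characterization of upper hemicontinuity (using that $\NN$ has compact values) to place limit points of $\mu_n$ in $\M(\mu^w)$.

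For part (i), compactness of $\X$ makes $\{\mu_n^x\}$ automatically tight, and combined with the assumed tightness of $\{\mu_n^w\}$ this yields tightness of $\{\mu_n\}$ on $\P(\W \times \X)$. If $\mu_{n_k} \Rightarrow \mu$ along a subsequence, then continuity of the marginal map $m \mapsto m^w$ and the fact that $(\epsilon_{n_k}, 1/n_k)$ is deterministic give joint convergence $(\mu_{n_k}, \mu_{n_k}^w, \epsilon_{n_k}, 1/n_k) \Rightarrow (\mu, \mu^w, 0, 0)$ via the continuous mapping theorem. I would then invoke the Skorokhod representation theorem to realize this convergence almost surely on an auxiliary Polish probability space. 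Since $\mu_{n_k} \in \NN(\mu_{n_k}^w, \epsilon_{n_k}, 1/n_k)$ a.s.\ and $\NN$ is upper hemicontinuous at $(\mu^w, 0, 0)$ with compact values, the sequential characterization forces the a.s.\ limit $\mu$ to lie in $\NN(\mu^w, 0, 0) = \M(\mu^w)$. This conclusion transfers back to the original probability space because the event $\{\mu \in \M(\mu^w)\}$ depends only on the joint law of $(\mu, \mu^w)$, which is preserved by Skorokhod coupling.

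Part (ii) follows from (i) with the deterministic limit $\mu^w = \lambda_0$. The convergence in probability claim would follow by a contradiction argument: if $\PP(d(\mu_n, \M(\lambda_0)) \ge \epsilon) \ge \delta > 0$ along a subsequence, tightness allows passage to a further distributional limit $\mu$ which lies in $\M(\lambda_0)$ a.s.\ by what was just proved. Since $\M(\lambda_0)$ is closed by Proposition \ref{pr:Nash-UHC}, the set $\{m : d(m, \M(\lambda_0)) \ge \epsilon\}$ is closed, and the Portmanteau theorem delivers the contradiction $\delta \le \limsup \PP(\mu_{n_k} \in \{d \ge \epsilon\}) \le \PP(\mu \in \{d \ge \epsilon\}) = 0$.

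For part (iii), under the i.i.d.\ assumption Varadarajan's theorem (the strong law of large numbers for empirical measures on a Polish space) gives $\mu_n^w \to \lambda_0$ almost surely in the weak topology. I would then argue pathwise on the full-probability event where this convergence holds: the sequence $\{\mu_n(\omega)\}$ is tight as in (i), and applying the sequential characterization of upper hemicontinuity directly to the deterministic parameter convergence $(\mu_n^w(\omega), \epsilon_n, 1/n) \to (\lambda_0, 0, 0) \in \D(\NN)$ forces every subsequential limit of $\mu_n(\omega)$ into $\NN(\lambda_0, 0, 0) = \M(\lambda_0)$. A tight sequence in a metric space all of whose limit points lie in a closed set necessarily has its distance to that set tend to zero, so $d(\mu_n, \M(\lambda_0)) \to 0$ almost surely. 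The main obstacle is the joint distributional convergence and Skorokhod coupling in part (i), since the limiting measure $\mu^w$ is itself random there; once that coupling is in place, all three parts reduce to direct applications of the upper hemicontinuity from Proposition \ref{pr:Nash-UHC}.
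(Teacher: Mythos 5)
Your proposal is correct and follows essentially the same route as the paper: reduce everything to the statement $\mu_n \in \NN(\mu_n^w,\epsilon_n,1/n)$ and the upper hemicontinuity of $\NN$ from Proposition \ref{pr:Nash-UHC}, using Skorokhod representation for the random-limit case in (i), Portmanteau plus closedness of $\M(\lambda_0)$ for (ii), and a pathwise argument via the law of large numbers for empirical measures in (iii). The only cosmetic difference is that the paper justifies tightness of the laws of the random measures $\{\mu_n\}$ via the equivalence with tightness of the mean measures (Sznitman's Proposition 2.2(ii)), a detail your tightness step leaves implicit but which does not change the argument.
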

\begin{proof} {\ } \\
\begin{enumerate}[(i)]
\item By \cite[Proposition 2.2(ii)]{sznitman}, tightness of $\{\PP
  \circ (\mu_n^w)^{-1}\} \subset \P(\P(\W))$ is equivalent to
  tightness of the sequence of mean measures $\{\E[\mu_n^w]\} \subset
  \P(\W)$, where $\E[\mu_n^w](\cdot) := \E[\mu_n^w(\cdot)]$. The
  mean measure $\E[\mu_n]$ has first marginal $\E[\mu_n^w]$, and
  because $\X$ is compact we conclude that $\{\E[\mu_n]\} \subset \P(\W\times\X)$ is tight. Again using \cite[Proposition 2.2(ii)]{sznitman}, we conclude that $\{\PP \circ \mu_n^{-1}\} \subset \P(\P(\W\times\X))$ is tight.
Now, by Skorohod's representation theorem, we may assume that (along a
subsequence) $\mu_n$ converges almost surely to a random element $\mu$
of $\P(\W\times\X)$. This implies $\mu^w_n \rightarrow \mu^w$
a.s. Since $\mu_n \in \NN(\mu^w_n,\epsilon_n,1/n)$ by assumption,
the upper hemicontinuity of $\NN$ implies that a.s., $\mu$ must belong to
$\NN(\mu^w,0,0) = \M(\mu^w)$, where the last equality holds by
Remark \ref{rem-map}(1). 
\item Suppose the random measure $\mu$ is a subsequential limit in distribution of $\{\mu_n\}$. Because
  $\mu^w_n \rightarrow \lambda_0$, we must have $\mu^w=\lambda_0$
  a.s.   We conclude from (i) that $\PP(\mu
  \in \M(\lambda_0))=1$.  Thus, for any subsequential limit $\mu$ of
  $\{\mu_n\}$  and    $\epsilon > 0$, we have 
  $\PP(d(\mu,\M(\lambda_0)) \ge \epsilon) = 0$. 
When combined with the Portmanteau theorem, the closedness of the set
  $\M(\lambda_0)$ established in Proposition \ref{pr:Nash-UHC}  and the
  consequent   closedness of $\{m \in \P(\W\times\X) :
  d(m,\M(\lambda_0)) \ge \epsilon\}$ imply the claim
  \eqref{def:convergence-in-prob}. 
\item  Almost surely, the following holds: Because  $\mu^w_n \rightarrow
  \lambda_0$ due to $\{W_i^n\}$ being i.i.d.,  and $\mu_n \in \NN(\mu^w_n,\epsilon_n,1/n)$, upper
  hemicontinuity of $\NN$ implies that that the limit
  $\lim_{k\rightarrow\infty}\mu_{n_k}$ exists along some subsequence,
  and every such limit belongs to $\NN(\lambda_0,0,0)$.  By Remark
  \ref{rem-map}(1), this is enough to show $d(\mu_n,\M(\lambda_0)) \rightarrow 0$.
\end{enumerate}
\end{proof}

\subsection{Proofs of large deviation results}
\label{subs-pf-ldp}

We are now prepared to state and prove an extension of our main
result (Theorem \ref{th:intro-LDP-setvalued}) that allows for
approximate equilibria.   
Recall from Section \ref{se:intro:LDP-set} the definition of the space $\mathfrak{C}$, equipped with the
upper Vietoris topology.  Having identified the suitable space,
topology and mappings,   the proof of this extension  follows from  a simple application of the contraction
principle from large deviations theory. 
As we will use it on several occasions, it is worth recalling here the
general definition of an LDP.  
We say that a
sequence of Borel probability measures $\{\nu_n\}$ on a topological space
$S$ satisfies an LDP  with good rate function $I :
S \rightarrow [0,\infty]$ if the level set $\{s \in S : I(s) \le c\}$
is compact for each $c \ge 0$ and if the following holds for every
Borel set $A \subset S$: 
\begin{align*}
\limsup_{n\rightarrow\infty}\frac{1}{n}\log \nu_n(A) &\le -\inf_{s \in \overline{A}}I(s), \\
\liminf_{n\rightarrow\infty}\frac{1}{n}\log \nu_n(A) &\ge -\inf_{s \in A^\circ}I(s),
\end{align*}
where $\overline{A}$ and $A^\circ$ denote the closure and interior. We
say a sequence of $S$-valued random variables satisfies an LDP  
 if the corresponding sequence of probability measures does. In the following, recall the definition of the relative entropy $H$ from \eqref{def:entropy}.

\begin{theorem} \label{th:LDP-setvalued}
Suppose $\epsilon_n \rightarrow 0$ and $\{W_i\}$ is an
i.i.d.\ sequence of $\W$-valued random variables with common type
distribution $\lambda_0$.  Then the sequence of sets of $\epsilon_n$-Nash equilibria
\[
\NN\left(\frac{1}{n}\sum_{i=1}^n\delta_{W_i},\epsilon_n,\frac{1}{n}\right),
\quad n \in \mathbb{N}, 
\]
satisfies an LDP 
 on $\mathfrak{C}$ with good rate function 
\begin{align}
J(A) = \inf\left\{H(\lambda | \lambda_0) : \lambda \in \P(\W), \
  \M(\lambda) = A\right\}. \label{def:J}
\end{align}
\end{theorem}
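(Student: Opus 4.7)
The plan is to obtain the result as a direct consequence of the contraction principle, applied through the embedding framework developed in Section~\ref{subs-coupling}. The key continuity input is already in hand: Proposition~\ref{pr:Nash-UHC} asserts that $\NN : \D(\NN) \to 2^{\P(\W\times\X)}$ is upper hemicontinuous with compact values, and by the Lemma~\ref{le:uhcVietoris} (alluded to in the proof of Theorem~\ref{th:intro-limit-setvalued}) this is equivalent to continuity of $\NN$ as a map from $\D(\NN)$ into $(\mathfrak{C}, \text{upper Vietoris})$. So the real work is to establish an LDP for the input $(\lambda_n, \epsilon_n, 1/n)$, where $\lambda_n := \frac{1}{n}\sum_{i=1}^n \delta_{W_i}$.

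First I would establish an LDP on $\D(\NN) \subset \P(\W) \times [0,\infty) \times [0,1]$ for the sequence $X_n := (\lambda_n, \epsilon_n, 1/n)$. By Sanov's theorem, $\{\lambda_n\}$ satisfies an LDP on $\P(\W)$ with good rate $H(\cdot | \lambda_0)$ (good because $\W$ is Polish). Since $\lambda \mapsto (\lambda,0,0)$ is a continuous embedding of $\P(\W)$ into $\D(\NN)$, the contraction principle yields an LDP for $Y_n := (\lambda_n, 0, 0)$ on $\D(\NN)$ with good rate function $\tilde{I}(\lambda, \epsilon, u) = H(\lambda|\lambda_0)$ if $\epsilon = u = 0$, and $+\infty$ otherwise. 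Next, because $(\epsilon_n, 1/n) \to (0,0)$ deterministically, the product-metric distance between $X_n$ and $Y_n$ converges to zero deterministically; hence for every $\delta > 0$, $\PP(d(X_n, Y_n) > \delta) = 0$ for all sufficiently large $n$, so $X_n$ and $Y_n$ are exponentially equivalent. The standard exponential equivalence theorem then transfers the LDP from $Y_n$ to $X_n$, preserving $\tilde{I}$.

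Second, I would apply the contraction principle using the continuous map $\NN : \D(\NN) \to \mathfrak{C}$. This produces an LDP for $\NN(X_n)$ on $\mathfrak{C}$ with contracted good rate function
\[
J(A) = \inf\{\tilde{I}(\lambda, \epsilon, u) : (\lambda,\epsilon,u) \in \D(\NN), \ \NN(\lambda,\epsilon,u) = A\}.
\]
Since $\tilde{I}$ is finite only on the slice $\{(\lambda, 0, 0) : \lambda \in \P(\W)\}$, and since $\NN(\lambda, 0, 0) = \M(\lambda)$ by Remark~\ref{rem-map}(1), this simplifies to $J(A) = \inf\{H(\lambda|\lambda_0) : \lambda \in \P(\W), \M(\lambda) = A\}$, which matches \eqref{def:J}.

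The proof is essentially bookkeeping once the framework is in place; I do not expect a substantive obstacle. The one point that warrants some care is making sure the contraction principle is applied on the right space: $\NN$ is only defined on $\D(\NN)$, so I would emphasize that $\D(\NN)$ is closed in $\P(\W) \times [0,\infty) \times [0,1]$ (shown in Proposition~\ref{pr:Nash-UHC}), that $X_n \in \D(\NN)$ almost surely, and that the support of $\tilde{I}$ lies in $\D(\NN)$, so the LDP for $X_n$ is well-posed on $\D(\NN)$ with its subspace topology. The only other point is the verification that $\tilde{I}$ has compact level sets, which reduces to compactness of the level sets of $H(\cdot|\lambda_0)$ in $\P(\W)$ — a classical consequence of Sanov in the Polish setting.
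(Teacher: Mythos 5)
Your proposal is correct and follows essentially the same route as the paper's proof: Sanov's theorem plus the contraction principle for the embedding $\lambda \mapsto (\lambda,0,0)$, exponential equivalence of $(\lambda_n,\epsilon_n,1/n)$ with $(\lambda_n,0,0)$, and then a second application of the contraction principle through the upper hemicontinuous (hence upper-Vietoris continuous) map $\NN$. The only point the paper flags that you do not is that $\mathfrak{C}$ is non-Hausdorff, so one must note that the contraction principle does not actually require the target space to be Hausdorff.
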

\begin{proof}
First recall from Proposition \ref{pr:Nash-UHC} that
$\NN(\lambda,\epsilon,u)$ is closed and thus belongs to
$\mathfrak{C}$, for every $(\lambda,\epsilon,u) \in \D(\NN)$. 
Define two $\D(\NN)$-valued random variables
\[
M_n = \left(\frac{1}{n}\sum_{i=1}^n\delta_{W_i},\epsilon_n,\frac{1}{n}\right), \quad\quad\quad M_n^0 = \left(\frac{1}{n}\sum_{i=1}^n\delta_{W_i},0,0\right).
\] 
 By Sanov's theorem and the contraction principle \cite[Theorem 4.2.1]{dembozeitouni},
applied to the continuous map $\P(\W) \ni \lambda \mapsto
(\lambda,0,0) \in \P(\W)\times [0,\infty)\times [0,1]$, 
$\{M^0_n\}$ satisfies an LDP 
on $\P(\W)\times [0,\infty)\times [0,1]$ with good rate function
\begin{align*}
(\lambda,\epsilon,u) &\mapsto \begin{cases}
H(\lambda | \lambda_0) &\text{if } \epsilon=u=0, \\
\infty &\text{otherwise}.
\end{cases}
\end{align*}
The sequences $\{M_n\}$ and $\{M^0_n\}$ are exponentially
equivalent in the sense that (c.f.\ \cite[4.2.10]{dembozeitouni})
\begin{align}
\limsup_{n\rightarrow\infty}\frac{1}{n}\log\PP(\bar{d}(M_n, M_n^0) \ge a) = -\infty, \text{ for all } a > 0, \label{pf:exp-equiv}
\end{align}
where we define the metric $\bar{d}$ on $\P(\W)\times [0,\infty)\times [0,1]$ by 
\[
\bar{d}((\lambda',\epsilon',u'),(\lambda,\epsilon,u)) = d(\lambda,\lambda') + |\epsilon-\epsilon'| + |u-u'|,
\]
where $d$ is any metric on $\P(\W)$ compatible with weak
convergence. In fact, the probability in \eqref{pf:exp-equiv} 
is zero for sufficiently large $n$. 
Thus, $\{M_n\}$ satisfies an LDP 
with the same rate function \cite[Theorem 4.2.13]{dembozeitouni}.  Because $\NN$ is
upper hemicontinuous as a set-valued map (by Proposition \ref{pr:Nash-UHC}), it is continuous as a map
from $\D(\NN)$ to $\mathfrak{C}$,
equipped with the upper Vietoris topology (see Lemma
\ref{le:uhcVietoris}). Thus, the contraction principle (see
\cite[Theorem 4.2.1]{dembozeitouni}, which does not actually need the
spaces to be Hausdorff) implies that
$\{\NN(M_n)=\NN\left(\frac{1}{n}\sum_{i=1}^n\delta_{W_i},\epsilon_n,\frac{1}{n}\right)\}$ 
satisfies  an LDP 
on $\mathfrak{C}$ with good rate
function
\begin{align*} 
A &\mapsto \inf\left\{H(\lambda | \lambda_0) : \lambda \in \P(\W), \ \NN(\lambda,0,0) = A\right\}  \\
	&= \inf\left\{H(\lambda | \lambda_0) : \lambda \in \P(\W), \ \M(\lambda) = A\right\} \\
	&= J(A),
\end{align*}
where the first equality uses the fact that $\NN (\lambda, 0, 0) = \M
(\lambda)$ from Remark \ref{rem-map}(4). 
\end{proof}

\begin{remark}
\label{rem-LDP-setvalued}
Recalling from Remark \ref{rem-map}(4) that 
$\widehat{\NN}_n(w_1,\ldots,w_n) =
\NN\left(\frac{1}{n}\sum_{i=1}^n\delta_{w_i},0,1/n\right)$, Theorem
\ref{th:intro-LDP-setvalued} is an immediate corollary of Theorem
\ref{th:LDP-setvalued}. 
\end{remark}

\begin{remark}
Because the proof of Theorem \ref{th:LDP-setvalued} relies on the
contraction principle, a similar result holds if we weaken the
assumptions on the type sequence $\{W_i\}$. They need not be i.i.d.,
as long as the sequence of empirical distributions
$\frac{1}{n}\sum_{i=1}^n\delta_{W_i}$ satisfies some  LDP. 
\end{remark}

We next state an extension of Theorem \ref{th:intro-LDP} and prove it
using Theorem \ref{th:LDP-setvalued} and some elementary properties of
the upper Vietoris topology. Interestingly, even without uniqueness we
find upper and lower bounds, although they do not match in general.

\begin{theorem} \label{th:LDP}
Use the notation and assumptions of Theorem \ref{th:limit}, and assume
also that 
$W^n_i=W_i$ for  $1 \le i \le n$, where $\{W_i\}$ is an
  i.i.d.\ sequence with distribution $\lambda_0 \in \P(\W)$. Then we have the following bounds, valid for every measurable set $A \subset \P(\W\times\X)$:
\begin{align}
\limsup_{n\rightarrow\infty}\frac{1}{n}\log\PP(\mu_n \in A) &\le -\inf\left\{H(m^w|\lambda_0) : m \in \overline{A} \cap \M\right\}, \label{th:def:LDP-upperbound} \\
\liminf_{n\rightarrow\infty}\frac{1}{n}\log\PP(\mu_n \in A) &\ge -\inf\{H(\lambda|\lambda_0) : \lambda \in \P(\W), \ \M(\lambda) \subset A^\circ\}. \label{th:def:LDP-lowerbound} 
\end{align}
Moreover, if $\M(\lambda)$ is a singleton for every $\lambda \ll \lambda_0$, then
\begin{align}
\liminf_{n\rightarrow\infty}\frac{1}{n}\log\PP(\mu_n \in A) &\ge -\inf\left\{H(m^w|\lambda_0) : m \in A^\circ \cap \M\right\}. \label{th:def:LDP-lowerbound2} 
\end{align}
\end{theorem}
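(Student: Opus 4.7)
The plan is to deduce this pointwise LDP for $\mu_n$ from the set-valued LDP of Theorem \ref{th:LDP-setvalued}, using the elementary inclusions afforded by the fact that the selected equilibrium $\mu_n$ always lies in the random set $\NN_n := \NN\bigl(\tfrac{1}{n}\sum_{i=1}^n \delta_{W_i},\epsilon_n,\tfrac{1}{n}\bigr)$. The only real work is translating between ``$\NN_n$-language'' (which is where our rate function naturally lives) and ``$\mu_n$-language'' (which is what we want bounds on), and identifying which subsets of $\mathfrak{C}$ are open or closed in the upper Vietoris topology.

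For the upper bound \eqref{th:def:LDP-upperbound}, I would start from the trivial inclusion
\[
\{\mu_n \in A\} \subset \{\mu_n \in \overline{A}\} \subset \{\NN_n \cap \overline{A} \neq \emptyset\}.
\]
The set $\mathfrak{U} := \{B \in \mathfrak{C} : B \cap \overline{A} \neq \emptyset\}$ is upper-Vietoris closed, since its complement $\{B \in \mathfrak{C} : B \subset (\overline{A})^c\}$ is in the defining base of open sets. Applying the upper bound of Theorem \ref{th:LDP-setvalued} to this closed set gives $\limsup_{n}\tfrac{1}{n}\log \PP(\mu_n \in A) \le -\inf\{H(\lambda|\lambda_0) : \M(\lambda) \cap \overline{A} \neq \emptyset\}$. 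To match this with the bound claimed in \eqref{th:def:LDP-upperbound}, one checks
\[
\inf\{H(\lambda|\lambda_0) : \M(\lambda) \cap \overline{A} \neq \emptyset\} = \inf\{H(m^w|\lambda_0) : m \in \overline{A} \cap \M\},
\]
which follows by noting that every $m \in \overline{A} \cap \M$ lies in $\M(m^w)$, while conversely any $m \in \M(\lambda) \cap \overline{A}$ has $m^w = \lambda$ by the definition of Cournot-Nash equilibrium (Definition \ref{def-CNeqb}).

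For the general lower bound \eqref{th:def:LDP-lowerbound}, the inclusion
\[
\{\NN_n \subset A^\circ\} \subset \{\mu_n \in A^\circ\} \subset \{\mu_n \in A\}
\]
reduces matters to the open set $\mathfrak{V} := \{B \in \mathfrak{C} : B \subset A^\circ\}$, which is a base element of the upper Vietoris topology. The lower bound of Theorem \ref{th:LDP-setvalued} immediately yields $\liminf_{n}\tfrac{1}{n}\log \PP(\mu_n \in A) \ge -\inf\{H(\lambda|\lambda_0) : \M(\lambda) \subset A^\circ\}$, which is precisely \eqref{th:def:LDP-lowerbound}.

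Finally, for the sharper lower bound \eqref{th:def:LDP-lowerbound2} under uniqueness, I would argue that $\inf\{H(\lambda|\lambda_0) : \M(\lambda) \subset A^\circ\} = \inf\{H(m^w|\lambda_0) : m \in A^\circ \cap \M\}$: the terms with $\lambda \not\ll \lambda_0$ or $m^w \not\ll \lambda_0$ contribute $+\infty$ and can be ignored, and for $\lambda \ll \lambda_0$ the assumption $\M(\lambda) = \{m_\lambda\}$ gives a bijection $\lambda \leftrightarrow m_\lambda$ between the two sides with matching entropy, since $m_\lambda^w = \lambda$. The principal challenge in this proof is not technical but conceptual, namely making sure the upper Vietoris characterization of open and closed sets is exploited correctly, so that the sandwiching inclusions produce the correct rate-function infima; the remaining steps are bookkeeping.
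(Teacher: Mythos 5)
Your proposal is correct and follows essentially the same route as the paper: both deduce the bounds from Theorem \ref{th:LDP-setvalued} via the inclusions $\{\NN_n \subset A^\circ\} \subset \{\mu_n \in A\} \subset \{\NN_n \cap \overline{A} \neq \emptyset\}$, identify the relevant upper-Vietoris open/closed sets, and match the infima using $m \in \M(m^w)$ and, for \eqref{th:def:LDP-lowerbound2}, the singleton structure of $\M(\lambda)$ for $\lambda \ll \lambda_0$. No gaps.
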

\begin{proof}
Suppose $A$ is closed. Then $\mathfrak{U} := \{E \in \mathfrak{C} : E \subset A^c\}  = \{E \in \mathfrak{C} : E \cap A = \emptyset\}$ is open in the upper Vietoris topology, so its complement is closed. Thus, using the upper bound of Theorem \ref{th:LDP-setvalued},
\begin{align*}
\limsup_{n\rightarrow\infty}&\frac{1}{n}\log\PP\left(\NN\left(\frac{1}{n}\sum_{i=1}^n\delta_{W_i},\epsilon_n,\frac{1}{n}\right) \cap A \neq \emptyset\right) \\
	&\le -\inf_{B \in \mathfrak{U}^c}J(B) \\
	&= -\inf_{B \in \mathfrak{U}^c}\inf\left\{H(\lambda | \lambda_0) : \lambda \in \P(\W), \ \M(\lambda) = B\right\} \\
	&= -\inf\left\{H(\lambda | \lambda_0) : \lambda \in \P(\W), \ \M(\lambda) \cap A \neq \emptyset\right\} \\
	&= -\inf\left\{H(m^w | \lambda_0) : m \in \M \cap A\right\}.
\end{align*}
Indeed, this last equality follows from two simple observations: If
$\M(\lambda) \cap A \neq \emptyset$, then there exists $m \in \M
(\lambda) \cap
A \subset \M \cap A$ such that $m^w = \lambda$. On the other hand, if $m \in \M \cap A$, then $m \in \M(m^w)$, so $\M(m^w)\cap A \neq \emptyset$. Finally, the upper bound \eqref{th:def:LDP-upperbound} follows from the inequality
\[
\PP\left(\mu_n \in A\right) \le \PP\left(\NN\left(\frac{1}{n}\sum_{i=1}^n\delta_{W_i},\epsilon_n,\frac{1}{n}\right) \cap A \neq \emptyset\right),
\]
which holds because, by Remark \ref{rem-map}(4), $\mu_n \in \NN\left(\frac{1}{n}\sum_{i=1}^n\delta_{W_i},\epsilon_n,\frac{1}{n}\right)$ a.s.

To prove the lower bound, let $A$ be open, and notice that $\mathfrak{U} = \{E
\in \mathfrak{C} : E \subset A\}$ is open in the upper Vietoris
topology. Theorem \ref{th:LDP-setvalued}  then  implies
\begin{align*}
\liminf_{n\rightarrow\infty}&\frac{1}{n}\log\PP\left(\NN\left(\frac{1}{n}\sum_{i=1}^n\delta_{W_i},\epsilon_n,\frac{1}{n}\right) \subset A\right) \\
	&\ge -\inf_{B \in \mathfrak{U}}J(B) \\
	&= -\inf_{B \in \mathfrak{U}}\inf\left\{H(\lambda | \lambda_0) : \lambda \in \P(\W), \ \M(\lambda) = B\right\} \\
	&= -\inf\left\{H(\lambda | \lambda_0) : \lambda \in \P(\W), \
          \M(\lambda) \subset A\right\}, 
\end{align*} 
where the last equality uses the property that $\M (\lambda)$ is
closed for every $\lambda \in \P(\W)$ (see Proposition
\ref{pr:Nash-UHC}). 
Then the lower bound \eqref{th:def:LDP-lowerbound} follows from the inequality
\[
\PP(\mu_n \in A) \ge \PP\left(\NN\left(\frac{1}{n}\sum_{i=1}^n\delta_{W_i},\epsilon_n,\frac{1}{n}\right) \subset A\right),
\]
which again holds because $\mu_n \in
\NN\left(\frac{1}{n}\sum_{i=1}^n\delta_{W_i},\epsilon_n,\frac{1}{n}\right)$
a.s.; see Remark \ref{rem-map}(4). 

Finally, we deduce \eqref{th:def:LDP-lowerbound2} from \eqref{th:def:LDP-lowerbound}. Again let $A$ be open and note first that $H(\lambda | \lambda_0) < \infty$ only if $\lambda \ll \lambda_0$. Supposing $\M(\lambda) = \{M[\lambda]\}$ is a singleton for all $\lambda \ll \lambda_0$, then trivially $m= M[m^w]$ for all $m \in \M$, and
\begin{align*}
\inf\left\{H(\lambda | \lambda_0) : \lambda \in \P(\W), \ \M(\lambda) \subset A\right\} &= \inf\left\{H(\lambda | \lambda_0) : \lambda \ll \lambda_0, \ M[\lambda] \in A\right\} \\
	&= \inf\left\{H(m^w | \lambda_0) : m \in \M \cap A, \ m^w \ll \lambda_0\right\}  \\
	&=\inf\left\{H(m^w | \lambda_0) : m \in \M \cap A\right\} .
\end{align*}
\end{proof}

In applications, it is important to know if the bounds in the large
deviation principles of Theorems \ref{th:LDP} and
\ref{th:LDP-setvalued} are nonzero. The following straightforward lemmas help to check this.

\begin{lemma} \label{le:inf>0-setvalued}
Let $J$ be as in \eqref{def:J}, and let $\mathfrak{U} \subset \mathfrak{C}$ be a
closed set with $\M(\lambda_0) \notin \mathfrak{U}$. Then $\inf_{A \in \mathfrak{U}}J(A) > 0$.
\end{lemma}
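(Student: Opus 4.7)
The plan is to argue by contradiction: suppose $\inf_{A \in \mathfrak{U}} J(A) = 0$. Then there is a sequence $\{A_n\} \subset \mathfrak{U}$ with $J(A_n) \to 0$, and by the definition of $J$ in \eqref{def:J}, for each $n$ I can choose a measure $\lambda_n \in \P(\W)$ with $\M(\lambda_n) = A_n$ and $H(\lambda_n | \lambda_0) \le J(A_n) + 1/n$, so that $H(\lambda_n | \lambda_0) \to 0$.

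First I would extract weak convergence $\lambda_n \to \lambda_0$ in $\P(\W)$ from this entropy convergence. The simplest route is Pinsker's inequality, which even gives total variation convergence; alternatively, goodness of $H(\cdot | \lambda_0)$ as a rate function (compact level sets plus lower semicontinuity) identifies $\lambda_0$ as the unique possible accumulation point of $\{\lambda_n\}$.

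Next I would combine Proposition \ref{pr:Nash-UHC}, which gives upper hemicontinuity of $\NN$ on $\D(\NN)$, with Lemma \ref{le:uhcVietoris}, which translates upper hemicontinuity into honest continuity of $\NN$ viewed as a map into $\mathfrak{C}$ equipped with the upper Vietoris topology. Together with Remark \ref{rem-map}(1), continuity at the point $(\lambda_0, 0, 0)$ then yields
\[
A_n = \NN(\lambda_n, 0, 0) \longrightarrow \NN(\lambda_0, 0, 0) = \M(\lambda_0)
\]
in the upper Vietoris topology on $\mathfrak{C}$. Since $\M(\lambda_0) \notin \mathfrak{U}$ and $\mathfrak{U}$ is closed, $\mathfrak{U}^c$ is an open neighborhood of $\M(\lambda_0)$, so the displayed convergence forces $A_n \in \mathfrak{U}^c$ for all sufficiently large $n$, contradicting $A_n \in \mathfrak{U}$.

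The main obstacle I anticipate is psychological more than technical: the upper Vietoris topology is non-Hausdorff, so limits of sequences are not unique and one must be careful not to rely on sequential characterizations of closedness. The argument above sidesteps these subtleties entirely, relying only on the bare topological definitions of convergence (every open neighborhood of the limit eventually contains the tail of the sequence) and of closedness (the complement is open), both of which apply unchanged in arbitrary topological spaces.
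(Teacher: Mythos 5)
Your proof is correct and takes essentially the same route as the paper: both arguments hinge on the continuity of $\lambda \mapsto \M(\lambda) = \NN(\lambda,0,0)$ into $\mathfrak{C}$ with the upper Vietoris topology (Proposition \ref{pr:Nash-UHC} together with Lemma \ref{le:uhcVietoris}) and on the fact that $H(\cdot\,|\,\lambda_0)$ vanishes only at $\lambda_0$. The only difference is cosmetic: the paper argues directly, noting that $S=\{\lambda : \M(\lambda)\in\mathfrak{U}\}$ is closed and that the infimum is attained at some $\lambda^*\neq\lambda_0$ by lower semicontinuity and compactness of sub-level sets of $H(\cdot\,|\,\lambda_0)$, whereas you run a minimizing-sequence contradiction using Pinsker's inequality to force $\lambda_n\to\lambda_0$.
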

\begin{proof}
Note that
\[
\inf_{A \in \mathfrak{U}}J(A) = \inf\left\{H(\lambda | \lambda_0) : \lambda \in \P(\W), \ \M(\lambda) \in \mathfrak{U}\right\}.
\]
Recall that $\lambda \mapsto \M(\lambda) = \NN(\lambda,0,0) \in
\mathfrak{C}$ is continuous by Proposition \ref{pr:Nash-UHC} and Lemma
\ref{le:uhcVietoris}. Hence, the set $S = \{\lambda \in \P(\W) :
\M(\lambda) \in \mathfrak{U}\}$ is closed because $\mathfrak{U}$ is.  Because $\lambda \mapsto H(\lambda | \lambda_0)$ is lower semicontinuous and has compact sub-level sets, there exists $\lambda^* \in S$ such that $H(\lambda^* | \lambda_0) =  \inf_{A \in \mathfrak{U}}J(A)$. But $\M(\lambda^*) \in \mathfrak{U}$ implies $\lambda^* \neq \lambda_0$, and thus $H(\lambda^* | \lambda_0) > 0$.
\end{proof}

For our second observation, Lemma \ref{le:inf>0} below, we  need the following simple property:

\begin{lemma} \label{le:Mclosed}
The set $\M$ is closed.  Moreover,  
 the sub-level set $\{m \in \P(\W\times\X) : H(m^w | \lambda_0) \le
 c\}$ is  compact for every $c < \infty$. 
\end{lemma}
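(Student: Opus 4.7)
The plan is to handle the two assertions separately, both via short compactness/continuity arguments that use infrastructure already established in the paper.

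For the closedness of $\M$, I would take a sequence $\{m_n\} \subset \M$ with $m_n \to m$ in $\P(\W\times\X)$ and show $m \in \M$. Since $m_n \in \M(m_n^w) = \NN(m_n^w,0,0)$ by Remark \ref{rem-map}(1), and since $m \mapsto m^w$ is continuous with respect to weak convergence, we have $m_n^w \to m^w$ in $\P(\W)$. Thus the triples $(m_n^w,0,0)$ converge to $(m^w,0,0)$ in the closed set $\D(\NN)$ (closedness by Proposition \ref{pr:Nash-UHC}). The sequential characterization of upper hemicontinuity for compact-valued maps between metric spaces (recalled just before Proposition \ref{pr:Nash-UHC}), together with upper hemicontinuity of $\NN$, forces $m \in \NN(m^w,0,0) = \M(m^w) \subset \M$.

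For the sublevel-set statement, write $C_c := \{m \in \P(\W\times\X) : H(m^w|\lambda_0) \le c\}$. I will show $C_c$ is closed and tight, and then invoke Prokhorov. Closedness: The map $m \mapsto m^w$ is continuous from $\P(\W\times\X)$ to $\P(\W)$, and $\lambda \mapsto H(\lambda|\lambda_0)$ is lower semicontinuous on $\P(\W)$ by the standard properties of relative entropy. Hence $m \mapsto H(m^w|\lambda_0)$ is lower semicontinuous on $\P(\W\times\X)$, so its sublevel set $C_c$ is closed.

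Tightness: By the classical fact that $\{\lambda \in \P(\W) : H(\lambda|\lambda_0) \le c\}$ is compact, hence tight, in $\P(\W)$ (our standing assumption (2) makes $\W$ a Polish space, so Prokhorov applies), for each $\varepsilon > 0$ there exists a compact $K_\varepsilon \subset \W$ such that $\lambda(K_\varepsilon) \ge 1-\varepsilon$ whenever $H(\lambda|\lambda_0) \le c$. Since $\X$ is compact by standing assumption (1), the product $K_\varepsilon \times \X$ is compact in $\W\times\X$, and for every $m \in C_c$ we have $m(K_\varepsilon \times \X) = m^w(K_\varepsilon) \ge 1-\varepsilon$. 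This shows $C_c$ is tight, hence relatively compact by Prokhorov's theorem, and combined with closedness it is compact.

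Neither step is a real obstacle; the only point that requires a bit of care is the appeal to the sequential characterization of upper hemicontinuity for the closedness of $\M$, which relies on the values $\M(\lambda) = \NN(\lambda,0,0)$ being compact and on $\P(\W\times\X)$ being metrizable — both of which are delivered by Proposition \ref{pr:Nash-UHC} and by $\W\times\X$ being Polish, so no genuine obstruction arises.
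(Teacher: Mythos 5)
Your proof is correct and follows essentially the same route as the paper: closedness of $\M$ via the upper hemicontinuity of $\NN$ applied to $m_n \in \NN(m_n^w,0,0)$ with $(m_n^w,0,0) \to (m^w,0,0)$, and compactness of the sub-level set from compactness of $\X$ together with compactness of the entropy sub-level set in $\P(\W)$. You simply spell out the closedness-plus-tightness/Prokhorov details that the paper leaves implicit.
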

\begin{proof}
Suppose $m_n \in \M$ and $m \in \P(\W\times\X)$ with $m_n \rightarrow
m$. Then $\lambda_n := m_n^w$ converges to $\lambda := m^w$.  Then
$m_n \in \M(\lambda_n)= \NN(\lambda_n,0,0)$, and the upper
hemicontinuity of $\NN$ (proven in Proposition \ref{pr:Nash-UHC})
implies that the unique limit point $m$ must belong to
$\NN(\lambda,0,0) \subset \M$.  This proves that $\M$ is closed. 
The second statement follows because $\X$ is compact and  the sub-level set $\{\lambda \in \P(\W) : H(\lambda | \lambda_0) \le
c\}$ is compact for each $c < \infty$  \cite[Lemma 1.4.3(c)]{Dupuis-Ellis}. 
\end{proof}

\begin{lemma} \label{le:inf>0}
If $A \subset \P(\W\times\X)$ is closed and $A \cap \M(\lambda_0) =
\emptyset$, then $\inf_{m \in A \cap \M}H(m^w|\lambda_0) > 0$. 
\end{lemma}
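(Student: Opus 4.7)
My plan is to use a compactness argument to produce a minimizer of $H(\cdot^w | \lambda_0)$ on $A \cap \M$, and then derive a contradiction from the assumption $A \cap \M(\lambda_0) = \emptyset$ if the infimum were zero. If $A \cap \M = \emptyset$, there is nothing to prove since $\inf\emptyset = \infty$ by convention, so I assume $A \cap \M \neq \emptyset$, and call the infimum $c^* := \inf_{m \in A \cap \M}H(m^w|\lambda_0) \in [0,\infty)$.

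First, I would select a minimizing sequence $m_n \in A \cap \M$ with $H(m_n^w | \lambda_0) \to c^*$. For all sufficiently large $n$ we then have $H(m_n^w | \lambda_0) \le c^* + 1$, so $\{m_n\}$ is eventually contained in the sub-level set $\{m \in \P(\W\times\X) : H(m^w | \lambda_0) \le c^*+1\}$, which is compact by Lemma \ref{le:Mclosed}. Extracting a subsequence, I may assume $m_n \to m^*$ in $\P(\W\times\X)$. Since $A$ is closed by hypothesis and $\M$ is closed by Lemma \ref{le:Mclosed}, the limit $m^*$ lies in $A \cap \M$. Moreover, the projection $m \mapsto m^w$ is continuous, so $m_n^w \to m^{*,w}$, and lower semicontinuity of $\lambda \mapsto H(\lambda | \lambda_0)$ (see \cite[Lemma 1.4.3]{Dupuis-Ellis}) gives
\[
H(m^{*,w} | \lambda_0) \le \liminf_{n\to\infty} H(m_n^w | \lambda_0) = c^*.
\]
Hence $m^* \in A \cap \M$ attains the infimum $c^*$.

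Finally, I would suppose for contradiction that $c^* = 0$. Then $H(m^{*,w} | \lambda_0) = 0$, which forces $m^{*,w} = \lambda_0$. Consequently $m^* \in \M(m^{*,w}) = \M(\lambda_0)$. But we also have $m^* \in A$, contradicting the assumption $A \cap \M(\lambda_0) = \emptyset$. Therefore $c^* > 0$, proving the lemma.

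The only mildly delicate step is the invocation of the compact sub-level set property, which is exactly what Lemma \ref{le:Mclosed} supplies; once that is in hand, the argument is a standard ``direct method'' extraction of a minimizer combined with the strict positivity characterization $H(\cdot | \lambda_0) = 0 \iff \cdot = \lambda_0$.
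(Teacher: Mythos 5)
Your proof is correct and follows essentially the same route as the paper: both establish the existence of a minimizer $m_*$ of $H(\cdot^w|\lambda_0)$ on $A\cap\M$ using the closedness of $\M$ and compactness of the entropy sub-level sets from Lemma \ref{le:Mclosed}, and then observe that $A\cap\M(\lambda_0)=\emptyset$ forces $m_*^w\neq\lambda_0$, hence $H(m_*^w|\lambda_0)>0$. Your version merely spells out the minimizing-sequence extraction and the lower semicontinuity of relative entropy that the paper leaves implicit.
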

\begin{proof}  Since $\M$ is closed and the sublevel sets 
$\{m:  H(m^w|\lambda_0) \le c\}$ are compact 
by Lemma \ref{le:Mclosed}, $A \cap
  \M$ is closed and there exists   $m_* \in A \cap \M$ such that
 $H(m_*^w | \lambda_0) =  \inf_{m \in A \cap \M} H(\lambda|\lambda_0)$.
But $A \cap \M(\lambda_0) = \emptyset$ implies $m_*^w \neq \lambda_0$,
and thus, $H(m_*^w | \lambda_0) > 0$. 
\end{proof}

\subsection{The conditional limit theorem and entry games} \label{se:conditional-limit}

In this section we first prove Theorem 
\ref{th:intro-conditional-limit}. Then, to illustrate the tractability
of the assumptions, we apply the theorem to an example from the class
of entry games discussed in Section \ref{subs-intro-mot}.

\begin{proof}[Proof of Theorem \ref{th:intro-conditional-limit}]
Fix a measurable set $A \subset {\mathcal P}( \W \times \X)$.  
Since $I(A) < \infty$,  the closedness of $\M$ and the compactness of the sub-level sets 
 $\{m \in \P(\W\times\X) : H(m^w | \lambda_0) \le
 c\}$ established in  Lemma \ref{le:Mclosed} imply 
that the set  $S(A)$ of minimizers in \eqref{def:S(A)} is non-empty
and compact. 
Next, use  the lower bound of  Theorem \ref{th:LDP} to get
\begin{align*}
\liminf_{n\rightarrow\infty}\frac{1}{n}\log\PP\left(\mu_n \in A\right) &\ge -\inf\left\{H(\lambda | \lambda_0) : \lambda \in \P(\W), \ \M(\lambda) \subset A^0\right\} \\
	&= -I(A) \\
	&= -\inf\left\{H(\lambda | \lambda_0) : \lambda \in \P(\W), \ \M(\lambda) \cap \overline{A} \neq \emptyset\right\} \\
	&= -\inf\left\{H(m^w | \lambda_0) : m \in \M \cap \overline{A}\right\},
\end{align*}
where we have used the assumption \eqref{def:conditional-assumption}
in the third line. Because the set
\[
A_\epsilon := \{m \in A : d(m,S(A)) \ge \epsilon\}
\]
is closed,  the upper bound of  Theorem \ref{th:LDP} yields
\begin{align*}
\limsup_{n\rightarrow\infty}&\frac{1}{n}\log\PP\left(\left. d(\mu_n,S(A)) \ge \epsilon \right| \mu_n \in A\right) \\
	&= \limsup_{n\rightarrow\infty}\frac{1}{n}\log\PP\left( \mu_n \in A_\epsilon\right) - \liminf_{n\rightarrow\infty}\frac{1}{n}\log\PP\left(\mu_n \in A\right) \\
	&\le \inf_{m \in \overline{A} \cap \M}H(m^w | \lambda_0) - \inf_{m \in A_\epsilon \cap \M}H(m^w|\lambda_0) \\
& =: C, 
\end{align*}
Then clearly $C \le 0$, as $\overline{A} \supset A_\epsilon$. If
$C=0$, then there exists $m \in A_\epsilon \cap \M$ such
that $H(m^w|\lambda_0) = \inf_{m \in \overline{A} \cap \M}H(m^w | \lambda_0)$. But this implies $m \in S(A)$, which contradicts the fact that $S(A)$ and $A_\epsilon$ are
disjoint. Thus $C < 0$, and the proof of \eqref{decay} is complete.

Finally, if $\M(\lambda) = \{M(\lambda)\}$ is a singleton for every $\lambda \ll \lambda_0$, then the identity $(M(\lambda))^w = \lambda$ implies
\begin{align*}
\inf\left\{H(\lambda | \lambda_0) : \lambda \ll \lambda_0, \ \M(\lambda) \subset A^\circ\right\} &= \inf\left\{H(\lambda | \lambda_0) : \lambda \ll \lambda_0, \ M(\lambda) \in A^\circ\right\} \\
	&= \inf\left\{H(m^w | \lambda_0) : m \in A^\circ \cap \M, \ m^w \ll \lambda_0\right\} \\
	&= \inf\left\{H(m^w | \lambda_0) : m \in A^\circ \cap \M \right\}.
\end{align*}
Similarly, we noted above already that 
\[
I(A) = \inf\left\{H(\lambda | \lambda_0) : \lambda \ll \lambda_0, \ \M(\lambda) \cap \overline{A} \neq \emptyset\right\} = \inf\left\{H(m^w | \lambda_0) : m \in \M \cap \overline{A}\right\}.
\]
\end{proof}

Let us now consider a simple entry game, specified as follows. There are two types and two actions, with $\W =
\{1,2\}$ and $\X = \{0,1\}$, there are no constraints in this
  model, so $\C(w)=\X$ for all $w \in \W$, and
 the objective function is given by
\[
F(m,w,x) = -x(-3m^x\{1\} + w), 
\]
for $m \in \P(\W \times \X)$, $w \in \W$, and $x \in \X$.
Think of each agent as facing a fixed payoff $w$ from entering the
market, i.e., choosing $x=1$. This payoff is offset by a loss of
$3 m^x\{1\}$ which increases with the fraction of agents entering the
market. If the net payoff is negative, the agent will choose $x=0$
and not enter the market.

For $q \in [0,1]$, let $\lambda_q = q\delta_{2} + (1-q)\delta_{1}$ denote the type distribution in which the fraction of type-$2$ agents is $q$. Of course, $\{\lambda_q : q \in [0,1]\}$ exhausts all of $\P(\W)$. To apply our conditional limit theorem, we first characterize all possible Cournot-Nash equilibria:

\begin{proposition} \label{pr:entrygame}
For the entry game described above, for each $q \in [0,1]$ the
Cournot-Nash equilibrium is unique. That is, $\M(\lambda_q) =
\{m_q\}$, where $m_q  \in {\mathcal P} (\W \times \X)$ is defined by
\begin{align*}
\left(\begin{matrix}
m_q\{(1,0)\} & m_q\{(1,1)\} \\
m_q\{(2,0)\} & m_q\{(2,1)\}
\end{matrix}\right) = \begin{cases}
\left(\begin{matrix}
2/3 & 1/3-q \\
0 & q
\end{matrix}\right) &\text{if } q \le 1/3,\vspace{0.4em} \\
\left(\begin{matrix}
1-q & 0 \\
0 & q
\end{matrix}\right) &\text{if } 1/3 < q < 2/3,\vspace{0.4em} \\
\left(\begin{matrix}
1-q & 0 \\
q-2/3 & 2/3
\end{matrix}\right) &\text{if } q \ge 2/3.
\end{cases}
\end{align*}
\end{proposition}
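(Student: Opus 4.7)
The plan is a direct case analysis based on the candidate value of $p := m^x\{1\}$, the total fraction of entrants. The Cournot-Nash condition reduces to a simple best-response property, and then the marginal constraint $m^w = \lambda_q$ together with a combinatorial check yields existence and uniqueness in each of the three regimes for $q$.

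First, I would compute the cost explicitly. For any $m \in \P(\W\times\X)$ and any $w \in \W$, we have $F(m,w,0) = 0$ and $F(m,w,1) = 3p - w$, where $p = m^x\{1\}$. Hence, for a type $w$ agent:
\begin{align*}
x = 1 \text{ is the unique minimizer} & \iff p < w/3,\\
x = 0 \text{ is the unique minimizer} & \iff p > w/3,\\
\text{both }x=0,1 \text{ are minimizers} & \iff p = w/3.
\end{align*}
Therefore $m \in \M(\lambda_q)$ if and only if $m^w = \lambda_q$ and, for each $w \in \{1,2\}$ with $\lambda_q\{w\}>0$, the conditional distribution $m(\cdot\mid w)$ on $\X$ is supported in the best-response set determined by comparing $p$ with $w/3$.

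Next I would rule out the extremes and then enumerate the remaining regimes for $p$. If $p < 1/3$, then for both types $w=1,2$ the unique best response is $x=1$, forcing $p = 1$, a contradiction. Similarly $p > 2/3$ forces $p = 0$. Thus $p \in [1/3,2/3]$. There are three sub-cases:

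\textbf{(a)} $p = 1/3$. Type~$2$ strictly prefers $x=1$, so $m\{(2,1)\} = q$ and $m\{(2,0)\} = 0$. Type~$1$ is indifferent, so the constraint $m\{(1,1)\} + m\{(2,1)\} = p = 1/3$ determines $m\{(1,1)\} = 1/3 - q$, which lies in $[0,1-q]$ iff $q \le 1/3$. This gives exactly the first matrix, and the weights are uniquely pinned down.

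\textbf{(b)} $1/3 < p < 2/3$. Type~$1$ strictly prefers $x=0$ and type~$2$ strictly prefers $x=1$, so $m\{(1,0)\} = 1-q$, $m\{(2,1)\} = q$, and $p = q$. This is consistent iff $1/3 < q < 2/3$ and yields the second matrix.

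\textbf{(c)} $p = 2/3$. Type~$1$ strictly prefers $x=0$, so $m\{(1,0)\} = 1-q$ and $m\{(1,1)\} = 0$. Type~$2$ is indifferent, and $m\{(2,1)\} = p - m\{(1,1)\} = 2/3$, so $m\{(2,0)\} = q - 2/3 \in [0,q]$ iff $q \ge 2/3$. This gives the third matrix.

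Finally I would check that the three regimes agree at the boundaries $q = 1/3$ and $q = 2/3$ (they do, by direct substitution) and observe that for each $q \in [0,1]$ exactly one of (a), (b), (c) is consistent with the corresponding constraint on $q$, so the candidate $m_q$ is the unique Cournot-Nash equilibrium with type distribution $\lambda_q$. There is no serious obstacle; the only point requiring a little care is to verify that in the indifference cases (a) and (c), the freedom in assigning best-response actions to the indifferent type is fully consumed by the marginal and by the value of $p$, leaving no residual non-uniqueness.
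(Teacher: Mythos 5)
Your proposal is correct and follows essentially the same route as the paper's proof: both reduce the Cournot--Nash condition to a best-response comparison of $p=m^x\{1\}$ with the thresholds $w/3$, and then split into the cases $p=1/3$, $p\in(1/3,2/3)$, and $p=2/3$ to pin down the measure uniquely. Your preliminary step ruling out $p<1/3$ and $p>2/3$ is a slightly cleaner organization of what the paper folds into its first case, but the argument is the same.
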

\begin{proof}
We know $\M(\lambda_q) \neq \emptyset$ for each $q \in [0,1]$, thanks to 
Proposition \ref{pr:existence}. Fix $q \in [0,1]$ and $m \in \M(\lambda_q)$, and abbreviate $p=m^x\{1\}$. We will show that $m=m_q$. Note first that 
\begin{align*}
\arg\min_{x \in \{0,1\}}F(m,w,x) = \begin{cases}
\{1\} &\text{if } w > 3p, \\
\{0\} &\text{if } w < 3p,  \\
\{0,1\} &\text{if } w=3p.
\end{cases}
\end{align*}
Next, there are three cases to check. First, if $3p \notin \{1,2\}$, then $m\{(w,1)\}
= 1_{\{w > 3p\}}$ for each $w$. Thus, 
\[
p = m^x\{1\} = q1_{\{2 > 3p\}} + (1-q)1_{\{1 > 3p\}} = \begin{cases}
0 &\text{if } p > 2/3 \\
q &\text{if } 2/3 > p > 1/3 \\
1 &\text{if } p < 1/3.
\end{cases}
\]
This can only hold if $p=q$ and $1/3 < p < 2/3$. For the second case,
suppose $p=1/3$. Then all type-$2$ agents enter since $2 > 3p$; that is,
$m\{(2,1)\} = q$ and  $m\{(2,0)\}=0$. Therefore, we have
\[
1/3 = p = m\{(1,1)\} + m\{(2,1)\} = m\{(1,1)\} + q, 
\] 
which implies $m\{(1,1)\} = 1/3-q$, which only makes sense for $q \le 1/3$. For the final case, suppose $p=2/3$. Then type-$1$ agents do not enter since $1 < 3p$; that is, $m\{(1,0)\} = 1-q$ and  $m\{(1,1)\}=0$. This implies
\[
2/3 = p = m\{(1,1)\} + m\{(2,1)\} = m\{(2,1)\}.
\]
Since $q = m\{(2,0)\} + m\{(2,1)\} = m\{(2,0)\}  + 2/3$, we must have
$q \ge 2/3$. 
\end{proof}

Similarly, in the $n$-player game, we can argue that there exists a Nash equilibrium with type vector $\vec{w}$, for every fixed $\vec{w}=(w_1,\ldots,w_n) \in \W^n$. To construct an example, there are three cases, depending again on the fraction $q$ of $(w_1,\ldots,w_n)$ which equal $2$. In each case, we construct one example (though there may be more) of an equilibrium, recalling that agent $i$ \emph{enters the market} if $x_i=1$:
\begin{enumerate}
\item Suppose $q \in [1/3,2/3]$. All type-$2$ agents enter, while none of the type-$1$ agents enter.
\item Suppose $q < 1/3$. All type-$2$ agents enter. Let $k$ be the greatest integer less than or equal to $n(1/3 - q)$. Then $k$ of the type-$1$ agents enter, and the rest do not.
\item Suppose $q > 2/3$. All type-$1$ agents choose not to enter. Let $k$ be the greatest integer less than or equal to $2/3$. Then $k$ of the type-$2$ agents enter, and the rest do not.
\end{enumerate}
Note that we have constructed multiple equilibria in the latter cases, although they share a common type-action distribution.

Now that we have computed the Cournot-Nash equilibria and are confident that $n$-player equilibria exist, we are ready to apply Theorem \ref{th:intro-conditional-limit}.
Now, let $\mu_n$ denote the empirical type-action distribution as usual, where the types are i.i.d.\ samples from the distribution $\lambda_{2/3}$. That is, each of the $n$ agents is independently assigned type $2$ with probability $2/3$ and type $1$ with probability $1/3$. By Theorem \ref{th:limit} and Proposition \ref{pr:entrygame}, we know that $\mu_n$ converges a.s. to the unique element $m_{2/3}$ of $\M(\lambda_{2/3})$.
Let us show using Theorem \ref{th:intro-conditional-limit} that, for $r \in (1/3,2/3)$,
if we condition on the rare event $\{\mu_n^x(1) \le r\}$, then $\mu_n
\rightarrow m_r$. More precisely, the conditional law of $\mu_n$
converges to the point mass at $m_r$. Intuitively, this rare event
means that no more than a fraction of $r$ of the agents enters the
market, and the most likely way for this to happen (asymptotically) is
for precisely a fraction of $r$ of the agents to enter.

Let $1/3 < r < 2/3$, and consider the set 
\[
A = \left\{m \in \P(\W\times\X) : m^x\{1\} \le r\right\}.
\]
We then compute
\begin{align*}
\inf_{m \in A \cap \M}H(m^w|\lambda_{2/3}) &= \inf\left\{H(m_q^w|\lambda_{2/3}) : q\in [0,1], \ m_q \in A\right\} \\
	&= \inf\left\{H(\lambda_q|\lambda_{2/3}) : q \in [0,r]\right\} \\
	&= \inf\left\{H(\lambda_q|\lambda_{2/3}) : q \in [0,r)\right\} \\
	&= \inf_{m \in A^\circ \cap \M}H(m^w|\lambda_{2/3}),
\end{align*}
where the second to last equality holds by continuity of $q \mapsto
H(\lambda_q|\lambda_{2/3})$ at $q=r$. Moreover, the unique minimizer
on the left-hand side is $m_r$,  since $q \mapsto H(\lambda_q|\lambda_{2/3})$ is strictly decreasing for $0 < q < 2/3$. This shows that the assumption \eqref{def:conditional-assumption-original} of Theorem \ref{th:intro-conditional-limit} holds, and also that the set $S(A)$ therein is simply the singleton $\{m_r\}$.

\begin{remark}
Interestingly, a simple variant of the above game yields a
tractable example in which the Cournot-Nash
equilibria are not unique and yet Theorem \ref{th:intro-conditional-limit} can be applied. For instance, suppose $\W=\{-1,1\}$ and $\X=\{0,1\}$, with
\[
F(m,w,x) = -x(2m^x\{1\} + w).
\]
Note that there is no minus sign in front of $2m^x\{1\}$, so it is not
really an entry game; agents are now \emph{encouraged} to participate
(i.e., choose $x=1$) when other agents participate. Setting
$m^1_q(dw,dx)=\lambda_q(dw)\delta_1(dx)$, it can be checked that
$m^1_q \in \M(\lambda_q)$ for each $q \in [0,1]$; that is, it is
always an equilibrium for every agent to participate. However, there
are two (resp. three) equilibria for $q=1/2$ (resp. $q <
1/2$). Nonetheless, if $\mu_n$ is the empirical type-action
distribution when types are sampled from $\lambda_p$, where $p > 1/2$,
we can find a limit theorem for the law of $\mu_n$ conditioned on the
event $\{\mu_n \in A\}$ where $A = \{m : m\{(-1,1)\} \le 1-r\}$, for
$r \in (p,1)$ close to $p$. Indeed, we can check that the assumption
\eqref{def:conditional-assumption} holds, and the unique element of
$S(A)$ is $m_r$.  There is even a critical value of $r$ for which
\eqref{def:conditional-assumption} holds, but $S(A)$ is no longer a
singleton.  We omit the details of these calculations, with the
remark mainly serving to illustrate the need for the generality of
Theorem \ref{th:intro-conditional-limit}.
\end{remark}

\subsection{Proof of probabilistic bounds on the price of anarchy} 
\label{se:PoA-proofs}

To prove Proposition \ref{pr:PoA}, we rework the notation of Section \ref{se:PoA} as we did in Section \ref{subs-coupling}. Recall the notation $\mathrm{Gr}(\C) = \{(w,x) : x \in \C(w)\}$. For $(\lambda,u)$ belonging to the domain $\DG$ defined in \eqref{def-dg}, define $\A(\lambda,u) \subset \P(\W\times\X)$ by
\[
\A(\lambda,u) := \left\{m \in \mathcal{E}_u(\W\times\X) : m(\mathrm{Gr}(\C))=1, \ m^w = \lambda\right\},
\]
Interpret $\A(\lambda,u)$ as the set of admissible type-action distributions. Recall the notation
\[
V(m) = \int_{\W\times\X} F(m,w,x)m(dw,dx),
\]
for $m \in \P(\W\times\X)$. The price of anarchy is now defined as the function $\mathfrak{P} : \DG \rightarrow [1,\infty]$ given by
\[
\mathfrak{P}(\lambda,u) = \left. \sup_{m \in \NN(\lambda,0,u)}V(m) \right\slash \inf_{m \in \A(\lambda,u)}V(m).
\]

\begin{lemma} \label{le:PoAusc}
Suppose $V> 0$ pointwise. Then $\mathfrak{P}$ is upper semicontinuous on $\DG$.
\end{lemma}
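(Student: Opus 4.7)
The plan is to prove upper semicontinuity of $\mathfrak{P}$ by showing separately that the numerator $N(\lambda,u) := \sup_{m \in \NN(\lambda,0,u)} V(m)$ is upper semicontinuous on $\DG$, while the denominator $D(\lambda,u) := \inf_{m \in \A(\lambda,u)} V(m)$ is lower semicontinuous and strictly positive; upper semicontinuity of the ratio $\mathfrak{P} = N/D$ will then follow from a short subsequence argument. A basic preliminary is that $V : \P(\W\times\X) \to \R$ is continuous: if $m_n \to m$ weakly, then the lifted measures $\eta_n(dm',dw,dx) := \delta_{m_n}(dm')\,m_n(dw,dx)$ on $\P(\W\times\X)\times\W\times\X$ converge weakly to $\delta_{m}(dm')\,m(dw,dx)$, and integrating the bounded continuous function $F$ (standing assumption (4)) against both sides yields $V(m_n) \to V(m)$, exactly as in the proof of Proposition \ref{pr:Nash-UHC}.

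Proposition \ref{pr:Nash-UHC} already shows that $\NN$ is upper hemicontinuous on $\D(\NN)$ with compact values, and nonemptyness of $\NN(\lambda,0,u)$ for $(\lambda,u)\in\DG$ is guaranteed by our standing assumption on $n$-player games together with Proposition \ref{pr:existence}. Upper semicontinuity of $N$ then follows from a standard sequential argument: along any $(\lambda_n,u_n)\to(\lambda,u)$ in $\DG$, choose maximizers $m_n \in \NN(\lambda_n,0,u_n)$ of the continuous function $V$, extract via upper hemicontinuity and compactness a subsequence $m_{n_k} \to m \in \NN(\lambda,0,u)$, and conclude $\limsup_n N(\lambda_n,u_n) = V(m) \le N(\lambda,u)$. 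For $D$, I first verify that the admissibility correspondence $\A$ is upper hemicontinuous on $\DG$ with nonempty compact values: $\A(\lambda,u)$ is closed (because $\mathrm{Gr}(\C)$ is closed by standing assumption (3), the marginal condition $m^w = \lambda$ is closed, and $\mathcal{E}_u(\W\times\X)$ is closed for $u > 0$) and tight (by compactness of $\X$ and tightness of $\{\lambda\}$), hence compact; nonemptyness comes from a measurable selection into $\C$; and upper hemicontinuity is verified by a Portmanteau and tightness argument identical to the one in the proof of Proposition \ref{pr:Nash-UHC}, without the $G$-inequality step. The dual sequential argument, with minimizers in place of maximizers, then gives lower semicontinuity of $D$, and $D(\lambda,u) > 0$ holds pointwise because the positive continuous function $V$ attains its infimum on the nonempty compact set $\A(\lambda,u)$.

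Finally, to combine the pieces: given $(\lambda_n,u_n) \to (\lambda,u)$ in $\DG$, pass to a subsequence realizing $\limsup_n \mathfrak{P}(\lambda_n,u_n)$, and then, using boundedness of $F$ (and hence of $N$) from standing assumption (4), to a further subsequence along which $N(\lambda_{n_k},u_{n_k}) \to N^*$ and $D(\lambda_{n_k},u_{n_k}) \to D^*$. Upper semicontinuity of $N$ gives $N^* \le N(\lambda,u)$, lower semicontinuity and positivity of $D$ give $D^* \ge D(\lambda,u) > 0$, and hence $\limsup_n \mathfrak{P}(\lambda_n,u_n) = N^*/D^* \le N(\lambda,u)/D(\lambda,u) = \mathfrak{P}(\lambda,u)$, which establishes upper semicontinuity. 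There is no deep obstacle here; the only subtle points are keeping track of which direction of hemicontinuity produces upper semicontinuity of $\sup$ versus lower semicontinuity of $\inf$ (both requiring upper hemicontinuity with compact values), and ensuring $D^* > 0$ so that the ratio argument closes, both of which fall directly out of tools already developed earlier in the paper.
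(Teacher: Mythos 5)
Your proposal is correct and follows essentially the same route as the paper: decompose $\mathfrak{P}$ into the supremum over $\NN(\lambda,0,u)$ (upper semicontinuous by upper hemicontinuity and compact-valuedness of $\NN$ from Proposition \ref{pr:Nash-UHC}) divided by the infimum over $\A(\lambda,u)$ (lower semicontinuous and strictly positive, via the same tightness/Portmanteau verification that $\A$ is upper hemicontinuous with nonempty compact values). The only cosmetic difference is that you carry out the sequential arguments and the continuity of $V$ explicitly where the paper invokes \cite[Lemma 17.30]{aliprantisborder}.
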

\begin{proof}
First, $(\lambda,u) \mapsto \sup_{m \in \NN(\lambda,0,u)}V(m)$ is
upper semicontinuous because $V$ is continuous and because, by Proposition \ref{pr:Nash-UHC}, $\NN$ is upper hemicontinuous and has compact values (see \cite[Lemma 17.30]{aliprantisborder}). It suffices (since $V > 0$) to show that the denominator $\inf_{m \in \A(\lambda,u)}V(m)$ is lower semicontinuous and strictly positive. For both of these claims it suffices to show that the set-valued map $\A$ is upper hemicontinuous and has compact values (again by \cite[Lemma 17.30]{aliprantisborder}).
To prove this, we again use the sequential characterization of upper
hemicontinuity.
Fix a convergent sequence $(\lambda_n,u_n) \rightarrow
(\lambda,u)$ in $\DG$, and let $m_n \in \A(\lambda_n,u_n)$ for each
$n$. We must show that there exist $m \in \A(\lambda,u)$ and a
subsequence $\{m_{n_k}\}$ which converges to $m$. Because $m_n^w =
\lambda_n$, the sequence $\{m_n^w\} \subset \P(\W)$ is tight. Because $\X$ is compact, the sequence $\{m_n\} \subset \P(\W\times\X)$ is tight and thus precompact by Prokhorov's theorem. Let $m$ denote any limit point, and abuse notation by assuming $m_n \rightarrow m$.

It remains to show that $m$ belongs to $\A(\lambda,u)$.
As $\mathrm{Gr}(\C)$ is closed, the Portmanteau theorem implies $m(\mathrm{Gr}(\C)) = \lim_n m_n(\mathrm{Gr}(\C))=1$. Clearly 
\[
m^w = \lim_n m^w_n = \lim_n \lambda_n = \lambda, 
\]
where the limits are  in distribution.
Finally, to check that $m \in \mathcal{E}_{u}(\W\times\X)$, there are two cases. If $u=0$, then $\mathcal{E}_{u}(\W\times\X)=\P(\W\times\X)$ and there is nothing to prove. Otherwise, $u_n = u$ for all sufficiently large $n$, which implies $m_n$ and thus $m$ belong to the closed set $\mathcal{E}_{u}(\W\times\X)$.
\end{proof}

\begin{proof}[Proof of Proposition \ref{pr:PoA}]
The notation of Proposition \ref{pr:PoA} translates as follows to the present notation, for $n \ge 1$ and $w_1,\ldots,w_n \in \W$:
\[
\mathrm{PoA}_n(w_1,\ldots,w_n) = \mathfrak{P}\left(\frac{1}{n}\sum_{i=1}^n\delta_{w_i},\frac{1}{n}\right), \quad\quad \text{ and } \quad\quad \mathrm{PoA}(\lambda) = \mathfrak{P}(\lambda,0).
\]
When $(W_i)_{i=1}^\infty$ are i.i.d.\ with distribution $\lambda_0$, we know that $\frac{1}{n}\sum_{i=1}^n\delta_{W_i} \rightarrow \lambda_0$ almost surely. By Lemma \ref{le:PoAusc}, 
\begin{align*}
\limsup_{n\rightarrow\infty}\mathrm{PoA}_n(W_1,\ldots,W_n) \le
\mathrm{PoA}(\lambda_0), \ a.s.
\end{align*}
To prove the second claim, we apply Sanov's theorem. Consider the set
\begin{align*}
B = \left\{(\lambda,u) \in \DG: \mathfrak{P}(\lambda,u) \ge r\right\}.
\end{align*}
Because $\mathfrak{P}$ is upper semicontinuous on $\DG$, the set $B$
is closed in $\DG$ and thus in $\P(\W) \times [0,1]$. By Sanov's
theorem,
$\left(\frac{1}{n}\sum_{i=1}^n\delta_{W_i},\frac{1}{n}\right)$
satisfies an LDP 
 on $\P(\W) \times [0,1]$ with good rate function
\[
J(\lambda,u) = \begin{cases}
H(\lambda|\lambda_0) &\text{if } \lambda \in \P(\W), \ u =0, \\
\infty &\text{otherwise}.
\end{cases}
\]
Thus, we have 
\begin{align*}
\limsup_{n\rightarrow\infty}\frac{1}{n}\log\PP(\mathrm{PoA}_n(W_1,\ldots,W_n) \ge r) 	
	&= \limsup_{n\rightarrow\infty}\frac{1}{n}\log\PP\left(\left(\frac{1}{n}\sum_{i=1}^n\delta_{W_i},\frac{1}{n}\right) \in B\right) \\
	&\le -\inf_{(\lambda,u) \in B}J(\lambda,u) \\
	&= -\inf\left\{H(\lambda|\lambda_0) : \lambda \in \P(\W), \ \mathrm{PoA}(\lambda) \ge r\right\}.
\end{align*}
To prove the lower bound, simply apply the lower bound of Sanov's theorem to the set $B^c$.
\end{proof}

\appendix

\section{The upper Vietoris topology} \label{ap:vietoris}

For this section, fix a Hausdorff topological space $\Y$, and let $\CY$ denote
the set of closed subsets of $\Y$. The \emph{upper Vietoris topology}
on $\CY$ is the one generated by the base $\{E^+ : E \subset \Y \text{
  is open}\}$, where we define $E^+ := \{A \in \CY: A \subset E\}$ for sets $E \subset \Y$. This section collects a few basic facts about this topology. First, notice that $\CY$ is not Hausdorff, because if $A_1$ and $A_2$ are two distinct closed subsets of $\Y$ with $A_1 \cap A_2 \neq \emptyset$, then $A_1$ and $A_2$ cannot be separated by open sets; indeed, if $E_i \subset \Y$ is open with $A_i \in E_i^+$ for $i=1,2$, then $A_i \subset E_i$, and $A_1 \cap A_2 \subset E_1 \cap E_2$ implies that $E_1^+$ and $E_2^+$ are not disjoint.

Recall from Section \ref{se:proofs} the definition of upper
hemicontinuity of a set-valued function (with closed values) between
two topological spaces.  Namely, if $\Z$ is another topological space and
$\Gamma : \Z \rightarrow \CY$, then $\Gamma$ is upper hemicontinuous if and only if $\{z \in \Z : \Gamma(z) \subset E\}$ is open for every open set $E \subset \Y$. Equivalently, $\Gamma$ is upper hemicontinuous if and only if $\Gamma^{-1}(E^+) := \{z \in \Z : \Gamma(z) \in E^+\}$ is open for every open set $E \subset \Y$. Because $\{E^+ : E \subset \Y \text{ is open}\}$ is a base for the upper Vietoris topology, this immediately proves the following observation:

\begin{lemma} \label{le:uhcVietoris}
Let $\Z$ be another topological space.
A mapping $\Gamma : \Z \rightarrow \CY$ is continuous with respect to the upper Vietoris topology if and only if it is upper hemicontinuous as a set-valued map.
\end{lemma}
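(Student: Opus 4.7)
The plan is to argue directly from the definitions of continuity and of the upper Vietoris topology, since the statement is essentially a reformulation.

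First I would recall the standard fact from general topology that a map $\Gamma : \Z \to \CY$ is continuous if and only if the preimage $\Gamma^{-1}(U)$ is open in $\Z$ for every member $U$ of a chosen base of the topology on $\CY$. Applying this to the prescribed base $\{E^+ : E \subset \Y \text{ open}\}$ of the upper Vietoris topology, continuity of $\Gamma$ is therefore equivalent to the condition that $\Gamma^{-1}(E^+)$ is open in $\Z$ for every open $E \subset \Y$.

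Next I would unravel the set $\Gamma^{-1}(E^+)$. By the definition $E^+ = \{A \in \CY : A \subset E\}$, we get
\[
\Gamma^{-1}(E^+) = \{z \in \Z : \Gamma(z) \in E^+\} = \{z \in \Z : \Gamma(z) \subset E\}.
\]
This is exactly the set whose openness for every open $E \subset \Y$ characterizes upper hemicontinuity of $\Gamma$, as recalled in Section \ref{se:proofs} and in the paragraph preceding the lemma. Combining the two equivalences yields the claim.

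There is no real obstacle here; the argument is essentially a tautology once one observes that checking continuity on a base suffices and that the base elements $E^+$ are precisely the sets appearing in the definition of upper hemicontinuity. The only mild subtlety worth flagging is that $\CY$ with the upper Vietoris topology need not be Hausdorff (as noted just above the lemma), but this plays no role in the proof, since the characterization of continuity via preimages of basic open sets holds for arbitrary topological spaces.
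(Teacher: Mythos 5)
Your proof is correct and matches the paper's own argument, which likewise observes that continuity can be checked on the base $\{E^+ : E \subset \Y \text{ open}\}$ and that $\Gamma^{-1}(E^+) = \{z : \Gamma(z) \subset E\}$ is exactly the set whose openness defines upper hemicontinuity. No issues.
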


\begin{lemma} \label{le:lhcVietoris}
Let $g : \Y \rightarrow \R$, and define $G : \CY \rightarrow \R$ by $G(A) = \sup_{y \in A}g(y)$. If $g$ is upper semicontinuous, then so is $G$. If $g$ is continuous, and if $A \in \CY$ is a compact set such that $g$ is constant on $A$ (i.e., $g(y)=g(y')$ for all $y,y' \in A$), then $G$ is continuous at $A$.
\end{lemma}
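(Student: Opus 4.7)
The plan is to exploit the base $\{E^+ : E \subset \Y \text{ open}\}$ for the upper Vietoris topology, where $E^+ := \{B \in \CY : B \subset E\}$, and to reformulate upper and lower semicontinuity of $G$ as openness of the corresponding strict sublevel and superlevel sets of $G$.

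For the first claim, I would show that $\{A \in \CY : G(A) < c\}$ is open for every $c \in \R$. Given $A_0$ with $G(A_0) < c$, pick $c'$ with $G(A_0) < c' < c$; then $g(y) \le G(A_0) < c'$ for every $y \in A_0$, hence $A_0 \subset \{g < c'\}$. Upper semicontinuity of $g$ renders $\{g < c'\}$ open in $\Y$, so $\{g < c'\}^+$ is a basic open neighborhood of $A_0$ in $\CY$. Any $B \in \{g < c'\}^+$ satisfies $B \subset \{g < c'\}$ and hence $G(B) \le c' < c$, which completes the argument.

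For the second claim, upper semicontinuity at $A$ is already covered by the first claim (continuity implies upper semicontinuity). For lower semicontinuity at $A$, let $r$ denote the common value of $g$ on $A$, so $G(A) = r$ (I take $A \ne \emptyset$; otherwise $G(A) = -\infty$ and there is nothing to verify), and fix $c < r$. Since $g$ is continuous, $U := \{y \in \Y : g(y) > c\}$ is open, and the hypothesis $g \equiv r > c$ on $A$ gives $A \subset U$, so $U^+$ is a basic open neighborhood of $A$. Any nonempty $B \in U^+$ has $g(y) > c$ for every $y \in B$, so $G(B) > c$. Combined with the first claim, this yields continuity of $G$ at $A$.

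The main subtlety I anticipate is the empty-set case: $\emptyset \in U^+$ for every open $U \subset \Y$, while $G(\emptyset) = -\infty$ under the paper's convention $\sup\emptyset = -\infty$, so the basic neighborhood $U^+$ does not literally witness $G > c$ everywhere. This is benign in the applications, since the relevant Nash-equilibrium sets $\NN_n$ are nonempty by the standing assumption, but strictly speaking the lower-semicontinuity argument requires phrasing on the subspace of nonempty closed sets. I also note that compactness of $A$ plays no visible role in the argument above; it is natural given the paper's setting (equilibrium sets live in the compact space $\P(\W \times \X)$, by Proposition \ref{pr:Nash-UHC}), but constancy of $g$ on $A$ appears to do all the work.
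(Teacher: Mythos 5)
Your proof is correct, and it takes a genuinely more elementary route than the paper's. The paper proves the first claim by viewing the identity on $\CY$ as an upper hemicontinuous correspondence and citing \cite[Lemma 17.30]{aliprantisborder}, and proves the second by introducing the image correspondence $\Gamma(B)=\{g(y):y\in B\}$, invoking \cite[Theorem 17.23]{aliprantisborder} for its upper hemicontinuity, and running a net argument that uses the fact that $\Gamma(A)$ is a singleton to force any net $r_\alpha\in\Gamma(A_\alpha)$ to converge to the common value $r_0$. You instead verify directly from the base $\{E^+\}$ that the strict sublevel sets $\{G<c\}$ are open (for upper semicontinuity) and that $\{g>c\}^+$ is a neighborhood of $A$ on which $G>c$ (for lower semicontinuity at $A$); this is self-contained and avoids the external citations, at the cost of being specific to real-valued suprema rather than general correspondences. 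Your two side remarks are both accurate and worth keeping: compactness of $A$ is indeed never used once $g$ is constant on $A$ (in the paper's proof compactness is only invoked for $\Gamma(A)$, which is a singleton by hypothesis anyway), and the empty-set issue is real --- $\emptyset$ lies in every basic open set $E^+$, so $G$ with the convention $\sup\emptyset=-\infty$ cannot be lower semicontinuous at any nonempty $A$ unless one works on the subspace of nonempty closed sets. The paper's own net argument silently makes the same restriction when it ``chooses arbitrarily $r_\alpha\in\Gamma(A_\alpha)$,'' so your caveat applies equally to both proofs and is harmless in the applications, where the equilibrium sets are nonempty by standing assumption.
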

\begin{proof}
The identity map on $\CY$ is continuous and thus, by Lemma \ref{le:uhcVietoris}, can be seen as an upper hemicontinuous set-valued map. The first claim then follows from \cite[Lemma 17.30]{aliprantisborder}. To prove the second claim, define a set-valued map $\Gamma : \CY \rightarrow 2^\R$ by $\Gamma(B) = \{g(y) : y \in B\}$. Then we may write $G(B) = \sup_{r \in \Gamma(B)}r$ for every $B \in \CY$. Because $g$ is continuous, $\Gamma$ is upper hemicontinuous \cite[Theorem 17.23]{aliprantisborder}. Note $\Gamma(A) = \{r_0\}$ is a singleton, by assumption. 

To prove $G$ is lower semicontinuous at $A$, suppose $A_\alpha$ is a net in $\CY$ converging to $A$. Choose arbitrarily $r_\alpha \in \Gamma(A_\alpha)$ for each $\alpha$. By upper hemicontinuity of $\Gamma$ and compactness of $\Gamma(A)$, the net $(r_\alpha)$ has a limit point in $\Gamma(A)$ by \cite[Theorem 17.16]{aliprantisborder}. Hence, $r_\alpha \rightarrow r_0$. Thus
\[
\liminf_\alpha G(A_\alpha) \ge \liminf_\alpha r_\alpha = r_0 = G(A).
\]
\end{proof}

Next, we identify the interiors and closures of certain subsets of $\CY$. Recall that the interior of a set is simply the union of its open subsets, and the closure of a set is the intersection of all closed sets containing it. Take note also that $\{E^+ : E \subset \Y \text{ is open}\}$ is a base, and so every open set in $\CY$ can be written as a union of these base elements. In the following lemma, especially the proof, we will be applying repeated complements, interiors, and $+$ operations, and we prefer to keep parentheses to a minimum by writing, e.g., $E^{c\,\circ \,+}$ in place of $((E^c)^\circ)^+$.

\begin{lemma} \label{le:Vietoris-setoperations}
Let $E \subset 
\Y$ be any set, and define
\begin{align*}
\mathfrak{U} &:= \{A \in \CY : A \cap E \neq \emptyset\} = E^{c\,+\,c}, \text{ and } \\
\mathfrak{W} &:= \{A \in \CY : A \subset E \} = E^+.
\end{align*}
The following hold:
\begin{enumerate}[(i)]
\item $\mathfrak{U}^\circ = \mathfrak{W}^\circ = \{A \in \CY : A \subset E^\circ\}$. In other words, $E^{c\,+\,c\,\circ} = E^{+\,\circ} = E^{\circ\,+}$.
\item $\overline{\mathfrak{U}} = \overline{\mathfrak{W}} = \{A \in \CY : A \cap \overline{E} \neq \emptyset\}$. In other words, $\overline{E^{c\,+\,c}} = \overline{E^+} = \overline{E}^{c\,+\,c}$.
\end{enumerate} 
\end{lemma}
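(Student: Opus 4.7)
The plan is to prove (i) and (ii) by the same basic technique: probing an open set $O \subseteq \Y$ with its singletons, which are closed because $\Y$ is Hausdorff. This converts statements about $O^+$ (a basic open set in $\CY$) into pointwise statements about $O$ itself.

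For (i), I will establish both equalities by showing each of $\mathfrak{W}^\circ$ and $\mathfrak{U}^\circ$ coincides with the basic open set $(E^\circ)^+$. The inclusions $\supset$ are immediate once one notes that $(E^\circ)^+$ is open in the upper Vietoris topology (by definition of the base) and is contained in both $\mathfrak{W}=E^+$ and $\mathfrak{U}$ (the latter because any nonempty $A\subset E^\circ$ satisfies $A\cap E\supset A\cap E^\circ = A \neq \emptyset$). For the reverse inclusions, let $A$ belong to $\mathfrak{W}^\circ$ (resp.\ $\mathfrak{U}^\circ$). Since the sets $O^+$ with $O\subset\Y$ open form a base, one can pick a basic neighborhood $O^+$ of $A$ contained in $\mathfrak{W}$ (resp.\ $\mathfrak{U}$). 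For any $y\in O$, Hausdorffness gives $\{y\}\in\CY$ and $\{y\}\subset O$, hence $\{y\}\in O^+$; consequently $\{y\}\subset E$ (resp.\ $\{y\}\cap E\neq\emptyset$), which in either case forces $y\in E$. Thus $O\subset E$, and since $O$ is open, $O\subset E^\circ$, giving $A\subset O\subset E^\circ$, i.e.\ $A\in(E^\circ)^+$.

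For (ii), set $K:=\{A\in\CY:A\cap\overline{E}\neq\emptyset\}$. First I would check that $K$ is closed: its complement in $\CY$ equals $(\overline{E}^c)^+$, which is a basic open set because $\overline{E}^c$ is open. Since $\mathfrak{U}\subset K$ and (for nonempty $A$) $\mathfrak{W}\subset K$, it follows that $\overline{\mathfrak{U}}\subset K$ and $\overline{\mathfrak{W}}\subset K$. For the reverse inclusions $K\subset\overline{\mathfrak{U}}$ and $K\subset\overline{\mathfrak{W}}$, fix $A\in K$ and any basic open neighborhood $O^+$ of $A$ (so $A\subset O$ with $O$ open in $\Y$). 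Choose $y\in A\cap\overline{E}\subset O\cap\overline{E}$; because $y\in\overline{E}$ and $O$ is an open neighborhood of $y$, we have $O\cap E\neq\emptyset$, so pick $y'\in O\cap E$. Then $\{y'\}$ is closed, lies in $O^+$, and also lies in $\mathfrak{W}\cap\mathfrak{U}$ (since $\{y'\}\subset E$ and $\{y'\}\cap E\neq\emptyset$). Hence every basic open neighborhood of $A$ meets both $\mathfrak{W}$ and $\mathfrak{U}$, giving $A\in\overline{\mathfrak{W}}\cap\overline{\mathfrak{U}}$.

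There is no deep obstacle; the argument is essentially bookkeeping once one realizes that singletons are the right probes. The one subtle point I would flag, which is the only thing that could go wrong, concerns the empty set: $\emptyset\in\mathfrak{W}=E^+$ while $\emptyset\notin\mathfrak{U}$ and $\emptyset\notin K$, so the stated equalities require excluding $\emptyset$ from $\CY$ (the standard convention in hyperspace theory, and the only case used elsewhere in the paper, since sets of equilibria are nonempty by Proposition \ref{pr:existence}). Aside from this convention, the rewrite of all three expressions in the notation $E^{c\,+\,c\,\circ}=E^{+\,\circ}=E^{\circ\,+}$ and $\overline{E^{c\,+\,c}}=\overline{E^+}=\overline{E}^{c\,+\,c}$ is then just a transcription of the set equalities proved above.
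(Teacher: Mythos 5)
Your proof is correct. For part (i) you use exactly the paper's device: probe a basic open set $O^+$ with the singletons $\{y\}$, $y\in O$, which are closed by Hausdorffness, to conclude that $O^+\subset\mathfrak{W}$ (or $O^+\subset\mathfrak{U}$) forces $O\subset E$ and hence $O\subset E^\circ$. Part (ii) is where you diverge: the paper deduces it purely formally from part (i), via the identities $\overline{B}=B^{c\,\circ\,c}$ and an application of (i) to $E^c$, so that the whole computation is a chain of complement/interior/$+$ manipulations; you instead argue directly, first observing that $K=\{A:A\cap\overline{E}\neq\emptyset\}$ is closed because its complement is the basic open set $(\overline{E}^c)^+$, and then showing every basic neighborhood $O^+$ of a point of $K$ contains a singleton $\{y'\}$ with $y'\in O\cap E$, which lies in both $\mathfrak{U}$ and $\mathfrak{W}$. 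The paper's route is shorter on the page and recycles (i) mechanically; yours is more transparent about where Hausdorffness and the density of $E$ in $\overline{E}$ are actually used. Your flag about the empty set is a genuine catch that the paper does not address: if $\emptyset\in\CY$ then $\emptyset\in(E^\circ)^+\setminus\mathfrak{U}$ and the equality $\mathfrak{U}^\circ=E^{\circ\,+}$ fails (indeed the paper's own step ``$\widetilde{E}^+\subset\mathfrak{U}$ iff $\widetilde{E}\subset E$'' would break), so the lemma implicitly requires $\CY$ to consist of nonempty closed sets, which is harmless for the paper's applications since $\M(\lambda)$ and $\NN_n$ are nonempty.
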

\begin{proof} {\ } \\
\begin{enumerate}[(i)]
\item Suppose $\widetilde{E} \subset \Y$ is open. Then $\widetilde{E}^+ \subset \mathfrak{U}$ if and only if $A \cap E \neq \emptyset$ for every closed set $A \subset \widetilde{E}$. By considering $A = \{x\}$ for $x \in \widetilde{E}$ (which is closed because $\Y$ is Hausdorff), we see that $\widetilde{E}^+ \subset \mathfrak{U}$ if and only if $\widetilde{E} \subset E$. Thus the interior of $\mathfrak{U}$ is the union over all sets $\widetilde{E}^+$ such that $\widetilde{E}$ is open and $\widetilde{E} \subset E$, and the largest such set is given by $\widetilde{E} = E^\circ$. This shows $\mathfrak{U}^\circ = E^{\circ\,+} = \{A \in \CY : A \subset E^\circ\}$. To show that $\mathfrak{W}^\circ = E^{\circ\,+}$, note first that clearly $E^{\circ\,+} \subset \mathfrak{W}^\circ$. On the other hand, if $\widetilde{E} \subset \Y$ is any open set such that $\widetilde{E}^+ \subset \mathfrak{W}$, then $A \subset \widetilde{E}$ implies $A \subset E$ for every closed set $A \subset \Y$. Taking $A = \{x\}$, we conclude that $\widetilde{E}^+ \subset \mathfrak{W}$ implies $\widetilde{E} \subset E$, which in turn implies $\widetilde{E} \subset E^\circ$ and $\widetilde{E}^+ \subset E^{\circ\,+}$.
\item Recall the identities $\overline{A} = A^{c\,\circ\,c}$ and $\overline{A}^c = A^{c\,\circ}$, valid for any set $A$ in any topological space. Namely, apply (i) with $E^c$ in place of $E$ to get $E^{c\,+\,\circ} = E^{c\,\circ\,+}$, and thus
\begin{align*}
\overline{\mathfrak{U}} = \overline{E^{c\,+\,c}} = E^{c\,+\,c\,c\,\circ\,c} = E^{c\,+\,\circ\,c} = E^{c\,\circ\,+\,c} = \overline{E}^{c\,+\,c}.
\end{align*}
Similarly, apply (i) with $E^c$ in place of $E$ to get  $E^{+\,c\,\circ} = E^{c\,c\,+\,c\,\circ} = E^{c\,\circ\,+}$, and thus
\begin{align*}
\overline{\mathfrak{W}} = \overline{E^+} = E^{+\,c\,\circ\,c} = E^{c\,\circ\,+\,c} = \overline{E}^{c\,+\,c}.
\end{align*}
\end{enumerate}
\end{proof}

\section{Nonatomic Congestion games} \label{ap:congestiongames}
This section is devoted to existence and uniqueness results for nonatomic congestion games, namely the proofs of Propositions \ref{pr:congestion-potential} and \ref{pr:congestion-unique}.

\subsection*{Proof of Proposition \ref{pr:congestion-potential}}
For each $m,\widetilde{m} \in \P(\W)$, define the directional derivative
\begin{align*}
D_{\widetilde{m}}U(m) := \frac{d}{d\epsilon}U(m + \epsilon(\widetilde{m}-m))|_{\epsilon=0}.
\end{align*}
Noting that $\ell_e(m) = \int 1_{\{e \in x\}}\,m(dw,dx)$ for each $e \in E$, we compute
\begin{align*}
D_{\widetilde{m}}U(m) &= \sum_{e \in E}c_e(\ell_e(m))\frac{d}{d\epsilon}\ell_e(m + \epsilon(\widetilde{m}-m))|_{\epsilon=0} \\
	&= \sum_{e \in E}c_e(\ell_e(m))\int  1_{\{e \in x\}}(\widetilde{m}-m)(dw,dx) \\
	&= \int \sum_{e \in x}c_e(\ell_e(m))(\widetilde{m}-m)(dw,dx) \\
	&= \int F(m,x)(\widetilde{m}-m)(dw,dx).
\end{align*}
By definition, $m \in \M(\lambda)$ if and only if $\int F(m,x)(\widetilde{m}-m)(dw,dx) \ge 0$ for every $\widetilde{m} \in \A(\lambda)$. On the other hand, $m$ minimizes $U(\cdot)$ on $\A(\lambda)$ if and only if $D_{\widetilde{m}}U(m) \ge 0$ for every $\widetilde{m} \in \A(\lambda)$
\hfill \qedsymbol

\subsection*{Proof of Proposition \ref{pr:congestion-unique}}
Denote a generic element of $\mathbb{T}$ by $K=(K_{i,j})_{i,j}$, where $i=1,\ldots,|\W|$ and $j=1,\ldots,|\X|$.
Define $U_\lambda : \mathbb{T} \rightarrow \R$ by $U_\lambda(K) = U(m_K)$, where $m_K \in \A(\lambda)$ is given by $m_K\{(w_i,x_j)\} = \lambda_iK_{i,j}$, where $\lambda_i := \lambda\{w_i\}$. Note that
\[
\ell_e(K) := \ell_e(m_K) = \sum_{i=1}^{|\W|}\sum_{j=1}^{|\X|}1_{\{e \in x_j\}}\lambda_iK_{i,j}.
\]
Then $\partial_{K_{i,j}}\ell_e(K) = \lambda_i1_{\{e \in x_j\}}$, and so
\begin{align*}
\partial_{K_{i,j}}U_\lambda(K) &= \partial_{K_{i,j}}\sum_{e \in E}\int_0^{\ell_e(K)}c_e(s)ds = \sum_{e \in E}c_e(\ell_e(K))\lambda_i1_{\{e \in x_j\}}, \\
\partial_{K_{i',j'}}\partial_{K_{i,j}}U_\lambda(K) &= \sum_{e \in E}c_e'(\ell_e(K))\lambda_i\lambda_{i'}1_{\{e \in x_j\}}1_{\{e \in x_{j'}\}}.
\end{align*}
Hence, for any $T=(T_{i,j}) \in \mathbb{T}$, we have
\begin{align*}
\sum_{i,j}\sum_{i',j'}T_{i,j}T_{i',j'}\partial_{K_{i',j'}}\partial_{K_{i,j}}U_\lambda(K) &= \sum_{e \in E}c_e'(\ell_e(K))\sum_{i,j}\sum_{i',j'}T_{i,j}T_{i',j'}\lambda_i\lambda_{i'}1_{\{e \in x_j\}}1_{\{e \in x_{j'}\}} \\
	&=  \sum_{e \in E}c_e'(\ell_e(K))\left(\sum_{i,j}T_{i,j}\lambda_i1_{\{e \in x_j\}}\right)^2.
\end{align*}
Note that $c_e' > 0$ for each $e \in E$ by assumption, and also the squared sum in the last expression is strictly positive for some $e$ since $T \neq 0$ belongs to the span of  $(\lambda_i1_{\{e \in x_j\}})_{i,j}$ by assumption. This shows that the Hessian of $U_\lambda$ is positive definite everywhere, and so $U_\lambda$ has a unique minimizer on the compact convex set $\mathbb{T}$.

\hfill\qedsymbol

\bibliographystyle{amsplain}
\bibliography{staticMFG}

\end{document}